\documentclass[11pt]{article}
\usepackage[letterpaper,hmargin=1in,vmargin=1in]{geometry}
\usepackage{graphicx}                   
\usepackage{amsmath}                    
\usepackage{amssymb}                    
\usepackage{amsfonts}                   
\usepackage{url}                        
\usepackage{color}                      
\usepackage{cite}                       
\usepackage{hyperref}                   
\hypersetup{colorlinks=true,linkcolor=[rgb]{0,0,0},citecolor=[rgb]{0,0,0},urlcolor=[rgb]{0,0,0}}
\usepackage{amsthm}                     
\usepackage{thmtools}                    
\usepackage{mathrsfs}                   
\usepackage{xparse}
\usepackage{bbm}
\declaretheorem[name=Lemma,numberwithin=section]{lemma}

\newcommand{\ceil}[1]{\left\lceil #1\right\rceil}
\newcommand{\ang}[1]{\left\langle #1 \right\rangle}
\newcommand{\RE}{\mathbb{R}}            
\newcommand{\eps}{\varepsilon}          
\newcommand{\ST}{\,:\,}                 

\newcommand{\inv}[1]{\frac{1}{#1}}
\newcommand{\inner}[2]{\ang{#1,#2}} 

\newcommand{\polar}[1]{#1^{\circ}}
\newcommand{\bd}{\partial}
\newcommand{\etal}{\textit{et al.}}
\newcommand{\SP}{\kern+1pt}

\newcommand{\RLocal}{r^+}
\newcommand{\CLEQ}{\lesssim_{\kern+1pt d}}
\newcommand{\CGEQ}{\gtrsim_{\kern+1pt d}}
\newcommand{\CEQ}{\asymp_d}
\newcommand{\ccs}{c_{\SP\mathrm{c}}}
\newcommand{\ce}{c_{\SP\mathrm{e}}}
\newcommand{\cb}{c_{\SP\mathrm{b}}}

\DeclareMathOperator{\centroid}{centroid}
\DeclareMathOperator{\interior}{int}
\DeclareMathOperator{\cofactor}{cofactor} 

\DeclareMathOperator{\conv}{conv}

\DeclareMathOperator{\vol}{vol}
\DeclareMathOperator{\area}{area}
\DeclareMathOperator{\dist}{dist}
\DeclareMathOperator{\finsler}{Finsler}
\DeclareMathOperator{\expand}{expand}

\DeclareMathOperator{\Dom}{Dom}
\DeclareMathOperator{\Var}{Var}

\NewDocumentCommand{\volBus}{O{}}{\vol^{\mathrm{Bus}}_{#1}}     
\NewDocumentCommand{\areaBus}{O{}}{\area^{\mathrm{Bus}}_{#1}}   
\NewDocumentCommand{\volMink}{O{}}{\vol^{M}_{#1}} 
\NewDocumentCommand{\volHilb}{O{}}{\vol^{H}_{#1}} 
\NewDocumentCommand{\volFunk}{O{}}{\vol^{F}_{#1}} 
\NewDocumentCommand{\volX}{O{}}{\vol_{#1}} 
\NewDocumentCommand{\areaMink}{O{}}{\area^{M}_{#1}} 
\NewDocumentCommand{\areaHilb}{O{}}{\area^{H}_{#1}} 
\NewDocumentCommand{\areaFunk}{O{}}{\area^{F}_{#1}} 
\NewDocumentCommand{\areaX}{O{}}{\area_{#1}} 
\NewDocumentCommand{\distMink}{O{}}{\dist^{M}_{#1}} 
\NewDocumentCommand{\distHilb}{O{}}{\dist^{H}_{#1}} 
\NewDocumentCommand{\distFunk}{O{}}{\dist^{F}_{#1}} 
\NewDocumentCommand{\distX}{O{}}{\dist_{#1}} 
\NewDocumentCommand{\finsHilb}{O{}}{\finsler^{H}_{#1}} 
\NewDocumentCommand{\finsFunk}{O{}}{\finsler^{F}_{#1}} 
\NewDocumentCommand{\finsX}{O{}}{\finsler_{#1}} 
\NewDocumentCommand{\polarIn}{O{}m}{{#2}^{\circ_{#1}}} 

\newcommand{\orthproj}[2]{{#1}\mathbin{\big|}{#2}} 

\NewDocumentCommand{\norm}{O{}}{n_{#1}} 
\NewDocumentCommand{\ballX}{O{}}{B_{#1}} 
\NewDocumentCommand{\ballMink}{O{}}{B^{M}_{#1}} 
\NewDocumentCommand{\ballHilb}{O{}}{B^{H}_{#1}} 
\NewDocumentCommand{\ballFunk}{O{}}{B^{F}_{#1}} 
\NewDocumentCommand{\finsBallX}{O{} d()}{B_{#1}(#2)}
\NewDocumentCommand{\finsBallHilb}{O{} d()}{B^{H}_{#1}(#2)}
\NewDocumentCommand{\finsBallFunk}{O{} d()}{B^{F}_{#1}(#2)}

\NewDocumentCommand{\Ncov}{O{} d()}{N^{H}_{#1}(#2)}
\NewDocumentCommand{\Nbd}{O{} d()}{S^{H}_{#1}(#2)}

\NewDocumentCommand{\NbdMink}{O{} d()}{S^{M}_{#1}(#2)}
\NewDocumentCommand{\NcovMink}{O{} d()}{N^{M}_{#1}(#2)}

\NewDocumentCommand{\NcovIn}{O{} d()}{\widehat{N}^{H}_{#1}(#2)}
\NewDocumentCommand{\NcovMinkIn}{O{} d()}{\widehat{N}^{M}_{#1}(#2)}
\NewDocumentCommand{\NbdIn}{O{} d()}{\widehat{S}^{H}_{#1}(#2)}
\NewDocumentCommand{\NbdMinkIn}{O{} d()}{\widehat{S}^{M}_{#1}(#2)}

\NewDocumentCommand{\expandM}{O{} d()}{\expand^{M}_{#1}(#2)}
\NewDocumentCommand{\expandH}{O{} d()}{\expand^{H}_{#1}(#2)}

\title{On the Duality of Coverings in Hilbert Geometry}

\author{%
	Sunil Arya\thanks{Research supported by the Research Grants Council of Hong Kong, China under project number 16214721.}\\
		Department of Computer Science and Engineering \\
		Hong Kong University of Science and Technology \\
		Clear Water Bay, Kowloon, Hong Kong\\
		arya@cse.ust.hk \\
		\and
	David M. Mount\\
		Department of Computer Science and \\
		Institute for Advanced Computer Studies \\
		University of Maryland \\
		College Park, Maryland 20742 \\
		mount@umd.edu \\
}

\begin{document}
\date{~}
\maketitle

\begin{abstract} 
We prove polarity duality for covering problems in Hilbert geometry. Let $G$ and $K$ be convex bodies in $\mathbb{R}^d$ where $G \subset \operatorname{int}(K)$ and $\operatorname{int}(G)$ contains the origin. Let $N^H_K(G,\alpha)$ and $S^H_K(G,\alpha)$ denote, respectively, the minimum numbers of radius-$\alpha$ Hilbert balls in the geometry induced by $K$ needed to cover $G$ and $\partial G$. Our main result is a Hilbert-geometric analog of the K\"{o}nig--Milman covering duality: there exists an absolute constant $c \geq 1$ such that for any $\alpha \in (0,1]$, 
\[
    c^{-d}\,N^H_{G^{\circ}}(K^{\circ},\alpha) 
        ~ \leq ~ N^H_K(G,\alpha) 
        ~ \leq ~ c^{d}\,N^H_{G^{\circ}}(K^{\circ},\alpha), 
\] 
and likewise, 
\[
    c^{-d}\,S^H_{G^{\circ}}(K^{\circ},\alpha) 
        ~ \leq ~ S^H_K(G,\alpha) 
        ~ \leq ~ c^{d}\,S^H_{G^{\circ}}(K^{\circ},\alpha). 
\] 
We also recover the classical volumetric duality for translative coverings of centered convex bodies, and obtain a new boundary-covering duality in that setting.

The Hilbert setting is subtler than the translative one because the metric is not translation invariant, and the local Finsler unit ball depends on the base point. The proof involves several ideas, including $\alpha$-expansions, a stability lemma that controls the interaction between polarity and expansion, and, in the boundary case, a localized relative isoperimetric argument combined with Holmes--Thompson area estimates. In addition, we provide an alternative proof of Faifman's polarity bounds for Holmes--Thompson volume and area in the Funk and Hilbert geometries. 
\end{abstract} 

\textbf{Keywords:} Convexity, coverings, Hilbert geometry, polarity, volume, surface area.

\section{Introduction} \label{s:introduction}

Coverings of convex bodies are a basic quantitative concept in convex geometry. Coverings have found diverse applications, including approximations of low combinatorial complexity~\cite{AAFM22, AFM17c, AFM26}, approximate nearest neighbor searching~\cite{AFM18a}, computing the diameter and $\eps$-kernels~\cite{AFM17b}, and approximations to the closest vector problem~\cite{NaV22, AFM24}. They can be constructed from various classes of shapes, including ellipsoids and parallelepipeds~\cite{EHN11}, scaled Macbeath regions~\cite{AAFM22, AFM23, NaV22}, and Hilbert balls~\cite{AbM18, ArM23}. 

A central theme in the subject is that covering behavior is often reflected by polarity, in particular by the strong connection between the geometry of a convex body and its polar, or, equivalently, between a normed space and its dual. For example, K{\"o}nig and Milman~\cite{KoM87} showed that given two centrally symmetric convex bodies $C$ and $D$ in $\RE^d$ (or equivalently, two normed spaces in $\RE^d$), the covering complexity of $C$ by translates of $D$ and the covering complexity of the polar $\polar{D}$ by translates of $\polar{C}$ are equivalent up to a constant factor of the form $c^d$. This is related to the duality conjecture for entropy numbers of linear operators, posed by Pietsch~\cite{Pie72}. A major milestone is the duality theorem for entropy numbers by Artstein, Milman, and Szarek~\cite{AMS04}, together with related refinements comparing the entropy of an operator with that of its adjoint. (See also Artstein-Avidan, Giannopoulos, and Milman~\cite{AGM15} for a wider exploration of this area.)

In this paper, we establish a polarity principle for intrinsic covering problems in Hilbert geometries. Hilbert geometry provides a classical geometric framework in which the metric structure and associated notions of volume and area are tied to an ambient convex domain $K$. (Definitions will be provided in Section~\ref{s:prelim}.) Its basic features are that line segments are geodesics and that the metric is invariant under projective transformations. For each $x \in \interior(K)$ and $\alpha > 0$, let $\ballHilb[K](x, \alpha)$ denote the Hilbert metric ball of radius $\alpha$ centered at $x$. Given convex bodies $G$ and $K$ in $\RE^d$, with $G \subset \interior(K)$, let $\Ncov[K](G,\alpha)$ denote the minimum number of Hilbert balls of radius $\alpha$ needed to cover $G$. Define $\Nbd[K](G,\alpha)$ analogously for covering $G$'s boundary. Our main result is a duality property for these two covering numbers, which holds throughout the small-radius regime $\alpha \in (0,1]$.

\begin{restatable}[Duality for Hilbert-ball coverings]{theorem}{thmHilbertCoverDuality} \label{thm:hilbert-cover-duality}
There exists an absolute constant $c \geq 1$ such that for any pair of convex bodies $G$ and $K$ in $\RE^d$, with $O \in \interior(G)$ and $G \subset \interior(K)$, and any $\alpha \in (0,1]$,
\begin{gather*}
    (i)~~ c^{-d}\,\Ncov[\polar{G}](\polar{K},\alpha) 
        ~ \leq ~ \Ncov[K](G,\alpha) 
        ~ \leq ~ c^{d}\,\Ncov[\polar{G}](\polar{K},\alpha),
    \\
    \text{and}
    \\
    (ii)~~ c^{-d}\,\Nbd[\polar{G}](\polar{K},\alpha) 
        ~\leq ~ \Nbd[K](G,\alpha) 
        ~ \leq ~ c^{d}\,\Nbd[\polar{G}](\polar{K},\alpha).
\end{gather*}
\end{restatable}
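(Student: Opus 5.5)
The plan is to reduce both covering numbers to volumetric quantities attached to the pair $(G,K)$ that are visibly symmetric under polarity, and then to appeal to a polarity bound for Holmes--Thompson volume and area in the spirit of Faifman. Throughout, $a \CEQ b$ means $a$ and $b$ agree up to a factor $c^{\pm d}$ with $c$ absolute.

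\textbf{Step 1 (covering number as a volume).} For $x\in G$, the Hilbert ball $\ballHilb[K](x,\alpha)$ with $\alpha\le 1$ is, up to constant scaling, the body $x+\alpha\,\finsBallHilb[K](x)$, where $\finsBallHilb[K](x)$ is the Finsler unit ball at $x$. This holds because in the small-radius regime the Hilbert ball is sandwiched between constant dilates of this body. A standard packing/covering argument, run on the set $G$ and using Hilbert balls of radius $\alpha/2$ and $2\alpha$, will give
\[
\Ncov[K](G,\alpha) \CEQ \int_{G} \frac{dx}{\vol(\alpha\,\finsBallHilb[K](x))}.
\]
The subtlety is that $G$ is not a union of full balls near its boundary. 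To handle this, I would replace $G$ by its $\alpha$-expansion $\expandH[K](G,\alpha)$, the set of points within Hilbert distance $\alpha$ of $G$, which contains every ball centered in $G$. The integral is then taken over $\expandH[K](G,\alpha)$, and I would show that this changes it by at most a $c^d$ factor. The boundary count $\Nbd[K](G,\alpha)$ is handled analogously, with the integral taken over the shell $\expandH[K](G,\alpha)\setminus \expandH[K](G,\alpha)^{-}$, where the superscript $-$ denotes the inward $\alpha$-contraction of $G$.

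\textbf{Step 2 (the integral is a product-type volume).} Using the classical fact that $\finsBallHilb[K](x)$ is comparable to $(K-x)\cap(x-K)$, and by Blaschke--Santal\'o/Bourgain--Milman its polar has volume $\CEQ 1/\vol$, the integrand becomes $\CEQ \alpha^{-d}\vol(\polar{(\finsBallHilb[K](x))})$. The integral is then comparable to $\alpha^{-d}$ times the Holmes--Thompson volume of the expansion. Faifman's polarity bound, which the abstract says is reproved, states that $\volHilb[K](G)\CEQ \volHilb[\polar{G}](\polar{K})$ and similarly for areas. The key step is to show that polarity interchanges expansions: $\polar{(\expandH[K](G,\alpha))}$ should be sandwiched between the inward $c\alpha$-contraction and the inward $\alpha/c$-contraction of $\polar{K}$ within $\polar{G}$. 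Because the Hilbert metric is given by cross-ratios, and cross-ratios are preserved by projective duality of supporting hyperplanes, I expect this stability lemma to follow from a direct computation on supporting lines.

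\textbf{Step 3 (boundary case) and main obstacle.} For (ii), the volume of the $\alpha$-shell has to be converted to $\alpha$ times the Holmes--Thompson area of $\bd G$. The lower bound needs a localized relative isoperimetric inequality inside each Finsler ball, so that $\bd G$ carries area of order $\alpha^{d-1}$ per ball touching it. Once that conversion is in place, Faifman's area duality transfers the count across polarity. I expect the main obstacle to be the stability lemma in Step 2, because expansion and polarity do not commute exactly and the radii shift by constant factors. To absorb these shifts, I would use the monotonicity $\Ncov[K](G,\alpha)\CEQ \Ncov[K](G,2\alpha)$ for $\alpha\le 1$; this doubling estimate is itself derived from the volumetric formula in Step 1.
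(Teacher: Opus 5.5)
You correctly identify Step~2 as the crux, but the stability statement you propose is the wrong one, and it is false. After Faifman's bound you hold $\volHilb[\polar{G_+}](\polar{K})$, whose \emph{ambient} body is $\polar{G_+}$. The target $\Ncov[\polar{G}](\polar{K},\alpha)$ lives in the Hilbert geometry of $\polar{G}$. So the task is to change the ambient body while the covered set $\polar{K}$ stays fixed, and a statement about where the body $\polar{G_+}$ sits does not accomplish that.

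The sandwich also fails on its own terms. Since $\polar{K}\subseteq\polar{G_+}\subseteq\polar{G}$, the body $\polar{G_+}$ cannot lie inside any inward contraction of $\polar{K}$. The natural repair would be that $\polar{G_+}$ lies between the Hilbert $\alpha/c$- and $c\alpha$-expansions of $\polar{K}$ in $\polar{G}$. This already fails for $d=1$. Take $K=[-1,1]$ and $G=[-\tanh(\alpha/2),\tanh(\alpha/2)]$. Then $G_+=[-\tanh(3\alpha/2),\tanh(3\alpha/2)]$, so $\polar{G_+}=[-\coth(3\alpha/2),\coth(3\alpha/2)]$ and $\polar{G}=[-\coth(\alpha/2),\coth(\alpha/2)]$. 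The Hilbert distance in $\polar{G}$ from $1\in\bd\polar{K}$ to $\coth(3\alpha/2)\in\bd\polar{G_+}$ tends to $\tfrac12\ln 2$ as $\alpha\to 0$, not to $0$. Polarity does not interchange expansions at the level of bodies, so no cross-ratio computation will produce that sandwich.

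What works is a \emph{metric} comparison on the common domain $\polar{K}$ (Lemma~\ref{lem:polar-exp-hilb}): for all $p,x\in\polar{K}$, $\distHilb[\polar{G_+}](p,x)\le\distHilb[\polar{G}](p,x)+2\alpha$. It comes from the polar form of the Funk distance, $\distFunk[C](u,v)=\log\sup_{z\in\polar{C}}\frac{1-\inner{z}{u}}{1-\inner{z}{v}}$, taken with $\polar{C}=G_+$. Each $z\in G_+$ has some $g\in G$ with $\distFunk[K](z,g)+\distFunk[K](g,z)\le 2\alpha$. For $p,x\in\polar{K}$ this bounds the ratio at $z$ by $e^{2\alpha}$ times the ratio at $g$. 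In the example above the body moves by a constant, yet the distance between $\pm 1$ only grows from $\alpha$ to $3\alpha$. Hence an $\alpha$-cover of $\polar{K}$ in $\polar{G}$ yields a $4\alpha$-cover in $\polar{G_+}$, and the chain
\[
\Ncov[K](G,\alpha) \CLEQ \frac{\volHilb[K](G_+)}{\omega_d\alpha^d} \CEQ \frac{\volHilb[\polar{G_+}](\polar{K})}{\omega_d\alpha^d} \CLEQ \Ncov[\polar{G_+}](\polar{K},\alpha) \CLEQ \Ncov[\polar{G}](\polar{K},\alpha)
\]
gives one inequality. The other follows by applying the same chain to $(\polar{K},\polar{G})$, so no expansion on the dual side is ever needed.

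There are also two smaller errors. In Step~1, the integral over $G$ is \emph{not} comparable to the one over $G_+$ for thin $G$: the volume of $G$ can be arbitrarily small while $\Ncov[K](G,\alpha)\ge 1$. Only the upper bound through $G_+$ and the lower bound through the unexpanded body are needed. In Step~3, for needle-like $G$ a ball centered on $\bd G$ can contain far less than $\alpha^{d-1}$ of boundary area, and the shell volume is not comparable to $\alpha\,\areaHilb[K](G)$. The paper handles this differently:
\begin{itemize}
\item it shows that $G_+$ is relatively fat at scale $\alpha$;
\item it applies the anchored relative isoperimetric inequality (Lemma~\ref{lem:hilbert-HT-iso}) to packing balls centered on $\bd G_+$;
\item it transfers back to $\bd G$ via Lemma~\ref{lem:hilb-exp-bdry} (every point of $\bd G$ is within Hilbert distance $\alpha$ of $\bd G_+$, which requires the complementary-chord argument);
\item it finishes with the same stability lemma applied to $U=\bd\polar{K}$.
\end{itemize}
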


Covering by Hilbert balls differs significantly from covering by translates of a convex body, which (at least in the centrally symmetric case) can be couched as covering by metric balls in a Minkowski norm. In the Hilbert geometry, both the metric and the intrinsic volume and area densities depend on the ambient domain, and polarity must be reconciled with a geometry that varies from point to point. In the limit, as $\alpha \to 0$, volumetric and boundary covering numbers are closely related to a body's volume and surface area. From this perspective, Faifman's recent results on the duality properties of the Holmes--Thompson volume and surface areas of convex bodies in Hilbert geometries anticipate our work~\cite{Fai24}.

To the best of our knowledge, the exploration of duality properties for boundary coverings is novel. Boundary coverings are of particular interest in the design of approximation algorithms involving convex bodies, including polytope membership and nearest-neighbor searching~\cite{AFM18a}. Extending the work on volumetric covers of convex bodies by K{\"o}nig and Milman~\cite{KoM87}, Milman and Pajor~\cite{MiP00}, and Artstein {\etal}~\cite{AMS04}, we present a duality result for covering the boundary of a convex body by convex translates. For convex bodies $C$ and $D$ in $\RE^d$ and $\alpha>0$, let $\NbdMink[D](C,\alpha)$ denote the minimum number of translates of $\alpha D$ needed to cover $\bd C$. Our result applies under the assumption that the bodies are \emph{centered}, meaning that the body's centroid coincides with the origin.

\begin{restatable}[Duality for boundary coverings by translates]{theorem}{thmMinkAsymBoundaryCoverDuality} \label{thm:mink-asym-boundary-cover-duality}
There exists an absolute constant $c \geq 1$ such that for any pair of centered convex bodies $C$ and $D$ in $\RE^d$ and any $\alpha>0$,
\[
    c^{-d}\,\NbdMink[\polar{C}](\polar{D},\alpha) 
        ~ \leq ~ \NbdMink[D](C,\alpha)
        ~ \leq ~ c^{d}\,\NbdMink[\polar{C}](\polar{D},\alpha).
\]
\end{restatable}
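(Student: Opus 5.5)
We reduce both sides to a volumetric quantity that is visibly self-dual under polarity, up to factors $c^d$. The tool is the standard volumetric estimate for boundary coverings by translates. For convex bodies $C$ and $D$ containing the origin in their interiors, and for $\alpha>0$, we expect
\[
    \NbdMink[D](C,\alpha) ~ \CEQ ~ \frac{\vol\big((\bd C + \alpha D)\setminus \interior(C)\big) + \vol\big(C \setminus (C \ominus \alpha D)\big)}{\alpha^d \vol(D)},
\]
up to $c^d$ factors. Equivalently, we expect it to be comparable to $\vol\big((C+\alpha D)\setminus C\big)/(\alpha^d\vol(D))$, plus a correction term equal to $1$ when $C$ is small compared with $\alpha D$.

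The upper bound comes from a maximal packing of $\tfrac{\alpha}{2}$-translates of $D\cap(-D)$ centred on $\bd C$. Since $D$ is centered, $\vol(D\cap(-D)) \geq 2^{-d}\vol(D)$ by Milman--Pajor, so the symmetric part loses only $c^d$. The lower bound comes from the observation that each translate of $\alpha D$ meeting $\bd C$ covers at most $\alpha^d\vol(D)$ of the outer shell $(C+\alpha D)\setminus C$. The shell is in turn covered by the $2\alpha$-dilated translates, using centeredness to compare $x+\alpha D$ with $x-\alpha D$ up to a factor of $d$.

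After this reduction the statement becomes a duality for the quantity
\[
    \Phi_\alpha(C,D) ~=~ \frac{\vol\big((C+\alpha D)\setminus C\big)}{\alpha^d \vol(D)}.
\]
We write $\vol\big((C+\alpha D)\setminus C\big)$ as $\int_0^\alpha V(\bd(C+tD);\, D)\,dt$, where the integrand is a mixed-area-type term of the form $d\,V(C+tD,\ldots,C+tD,D)$. Two ingredients are then needed. First, the Blaschke--Santal\'o and Bourgain--Milman inequalities, applicable because both bodies are centered: they give $\vol(C)\vol(\polar{C}) \CEQ 1$ and $\vol(D)\vol(\polar{D}) \CEQ 1$, up to $c^d$. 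Second, a comparison of $(C+\alpha D)$ with $\big(\polar{C}\cap \alpha^{-1}\polar{D}\big)^{\circ}$, which are equal up to a factor of $2$ by the polar identity $(A\cap B)^\circ = \conv(\polar{A}\cup\polar{B})$. We then follow the K\"onig--Milman route. By the volumetric lemma, $\Phi_\alpha(C,D) \CEQ \vol(C+\alpha D)/\vol(\alpha D) - \vol(C)/\vol(\alpha D)$. Rather than handling this difference directly, we pass to the shell count between $C$ and $C+\alpha D$, i.e.\ an annulus between $C$ and $(1+\lambda)C$ with respect to $D$. Polarity maps this to an annulus between $\polar{D}$ and the corresponding enlargement with respect to $\polar{C}$. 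Applying K\"onig--Milman to the covering numbers $N(C+\alpha D,\alpha D)$ and $N(C,\alpha D)$, or to a difference of these through a localized argument, then yields the claim.

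The main obstacle is the subtraction: K\"onig--Milman controls $N(C,\alpha D)$ only up to $c^d$, so a difference of two such numbers is not controlled. To avoid this, we will localize instead of subtracting. We partition $\bd C$ via a maximal $\alpha D$-separated net $\{x_i\}$. For each $x_i$, the local piece $C\cap(x_i+\alpha D)$ has a polar dual piece of the form $\polar{D}\cap(y_i+\alpha^{-1}\ldots)$, located at a point $y_i\in\bd\polar{D}$ that is a normal dual of $x_i$. We then show that the duality map $x\mapsto$ (outer normal scaled by the support function) sends the $\alpha$-separated net on $\bd C$ to a family on $\bd \polar{D}$ that is $\alpha$-separated with respect to $\polar{C}$, with $c^d$ bounded multiplicity. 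This is the step where centeredness, through the Milman--Pajor symmetrization and the $d$-factor comparison of $D$ with $-D$, must be used carefully. We expect it to be the hardest step, and we will first try it via a local Bourgain--Milman applied to caps.
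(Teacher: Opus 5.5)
There is a genuine gap. The step that carries all of the duality is missing, and the mechanism you propose for it does not work.

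\textbf{The reduction is fine.} Take any boundary cover by sets $y_i+\alpha D$. It yields a cover of the shell $(C+\alpha D)\setminus C$ by the sets $y_i+2\alpha D$: if $z=c+\alpha u\notin C$, the segment $[c,z]$ leaves $C$ at some $x$ with $z-x\in\alpha D$, and no centeredness is needed. Conversely, a maximal $\alpha$-separated net on $\partial C$ in the norm of $D_{\cap}$ gives sets $x+\frac{\alpha}{2}D_{\cap}$ with disjoint interiors. Each has half its volume beyond a supporting hyperplane of $C$ at $x$, hence inside the shell. Brunn--Minkowski gives $\Phi_\alpha\ge 1$, so no correction term is needed.

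\textbf{The duality of $\Phi_\alpha$ is unproved.} After the reduction, the theorem \emph{is} the statement $\Phi_\alpha(C,D)\asymp_d\Phi_\alpha(D^{\circ},C^{\circ})$, and for this you offer only the localization sketch.
\begin{itemize}
\item The map $x\mapsto n_C(x)/h_C(n_C(x))$ sends $\partial C$ to $\partial C^{\circ}$, not to $\partial D^{\circ}$. There is no canonical pointwise correspondence between $\partial C$ and $\partial D^{\circ}$.
\item A Gauss-map-type correspondence (matching outer normals) violates your bounded-multiplicity claim. For $C=[-1,1]^d$ and $D=B_2^d$, the roughly $\alpha^{1-d}$ net points on a facet all go to the single point $\pm e_i$. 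The two covering numbers agree only globally, not point by point.
\item Separately, $\vol(C)\vol(C^{\circ})\asymp_d\omega_d^2$, not $\asymp_d 1$. These differ by a factor $d^{-\Theta(d)}$, which cannot be absorbed into $c^d$.
\end{itemize}

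\textbf{The missing idea} is to use an \emph{exactly} self-dual functional instead of a correspondence: Holmes--Thompson area, $\area^{M}_{D}(C)=\area^{M}_{C^{\circ}}(D^{\circ})$ for symmetric bodies (Lemma~\ref{lem:HT-mink-polarity}(ii)).

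Your own formula already gets you there. Write $\vol\bigl((C+\alpha D)\setminus C\bigr)=\int_0^\alpha\int_{\partial(C+tD)}h_D(n)\,d\lambda_{d-1}\,dt$. For symmetric $D$, the double-cone bound $h_D(u)\le d\,\lambda_d(D)/\bigl(2\lambda_{d-1}(D\cap u^{\perp})\bigr)$ together with Bourgain--Milman in $u^{\perp}$ gives $h_D(u)\lesssim_d\lambda_d(D)\,\lambda_{d-1}(D^{\circ}|u^{\perp})/\omega_{d-1}^2$. Monotonicity in $t$ then yields $\Phi_\alpha(C,D)\lesssim_d\area^{M}_{D}(C+\alpha D)/(\omega_{d-1}\alpha^{d-1})$. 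This is exactly the paper's intermediate quantity.

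Three further steps, none of which is in your plan, are then required:
\begin{itemize}
\item \emph{Polarity--expansion stability.} Dualizing leaves you covering $\partial D^{\circ}$ in the geometry of $(C+\alpha D)^{\circ}$ rather than $C^{\circ}$. By additivity of support functions, $\|v\|_{(C+\alpha D)^{\circ}}=\|v\|_{C^{\circ}}+\alpha\|v\|_{D^{\circ}}$. So distances between points of $D^{\circ}$ grow by at most $2\alpha$, and covering numbers change by only $c^d$.
\item \emph{Symmetrization on the primal side.} Both identities above require symmetry, so for centered bodies you must run the argument through $D_{\cap}$ and $C_{\cup}$.
\item \emph{Removing the symmetrizations on the dual side.} Pass from $(D^{\circ})_{\cup}$ to $D^{\circ}$ by Rogers--Shephard on projections. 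Pass from $(C^{\circ})_{\cap}$ to $C^{\circ}$ by Milman--Pajor, using that $O$ is the Santal\'o point of $C^{\circ}$ because $C$ is centered.
\end{itemize}

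Completed this way, your shell-packing bound would replace the paper's surface-fatness argument (Lemma~\ref{lem:mink-exp-fat}(ii), which rests on the relative isoperimetric inequality). That would be a real simplification. As written, though, the proposal does not prove the duality.
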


Taken together, these results reveal a common duality mechanism across two settings. Thus, Hilbert covering numbers are not merely an analog of the classical translative theory, but part of a broader polarity framework for coverings. 

\subsection{Overview of Techniques} \label{s:techniques-overview}

For volumetric coverings, we first thicken the inner body $G$ by including all points within Hilbert distance $\alpha$ of $G$, which we call its \emph{$\alpha$-expansion}, denoted $G_+$ (see Figure~\ref{f:overview}(a)). A maximal $\alpha$-separated subset of $G$ then yields a packing by radius-$\frac{\alpha}{2}$ Hilbert balls inside $G_+$ (see Figure~\ref{f:overview}(b)), and bounded-radius volume growth estimates convert this packing into a bound in terms of the normalized Holmes--Thompson volume of $G_+$. We next dualize this volume bound using Faifman's Hilbert polarity estimates~\cite{Fai24}. The resulting dual volume estimate is naturally expressed in the ambient geometry induced by $\polar{G_+}$, whereas the target covering problem is formulated in the ambient geometry induced by $\polar{G}$ (see Figure~\ref{f:overview}(c)). Since $G \subseteq G_+$, polarity reverses inclusion and hence $\polar{G_+} \subseteq \polar{G}$, so on the polar side the ambient body has shrunk, while the set to be covered remains the same, namely $\polar{K}$.

\begin{figure}[htbp]
  \centerline{\includegraphics[scale=0.40,page=1]{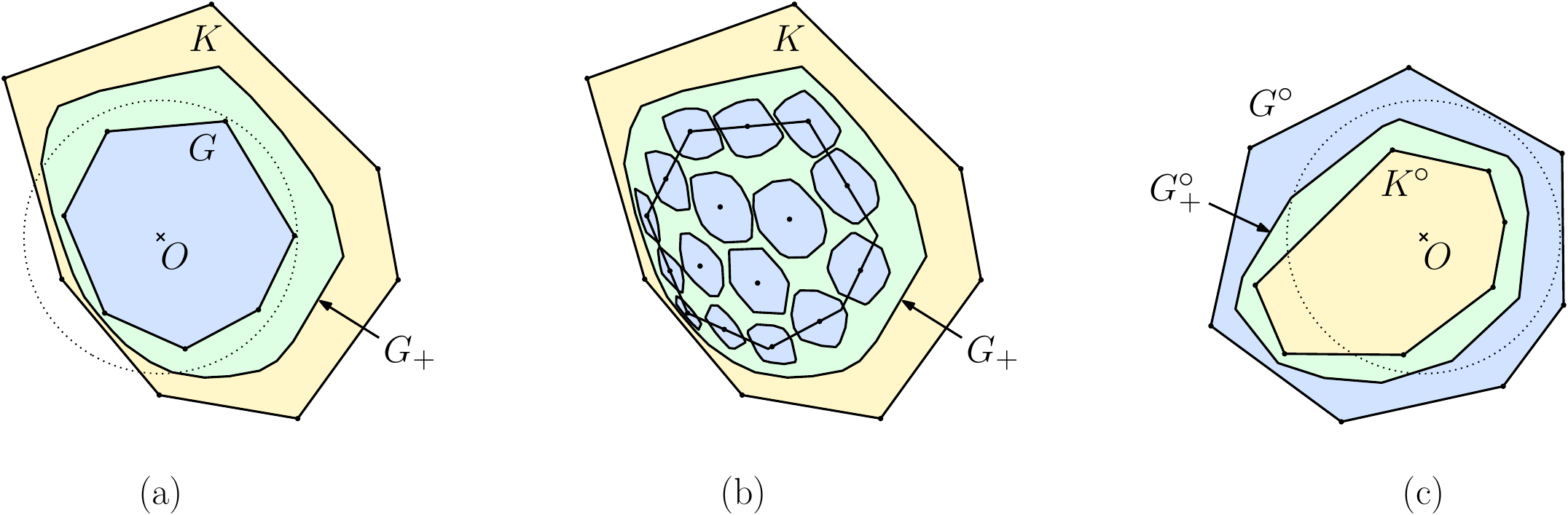}}
  \caption{Overview of techniques.}
  \label{f:overview}
\end{figure}

The issue is therefore to control how much the Hilbert metric on this fixed set changes when the ambient geometry passes from that of $\polar{G}$ to that of $\polar{G_+}$. In Section~\ref{s:distortion}, we prove a polarity-expansion stability lemma, which shows that on $\polar{K}$ this change is limited, in the sense that distances increase by at most an additive $2\alpha$, so every radius-$\alpha$ ball in the geometry of $\polar{G}$ induces a radius-$3\alpha$ ball in the geometry of $\polar{G_+}$ (see Lemma~\ref{lem:polar-exp-hilb}). This allows the two covering problems to be compared up to a universal constant-factor change in radius, and hence up to the acceptable $c^d$ loss in covering number.

For boundary coverings, the additional difficulty is that a radius-$\alpha$ Hilbert ball centered on $\bd G$ may capture arbitrarily little boundary area. This issue is most acute with thin bodies. Here, the expansion plays a genuine regularizing role by ``fattening'' the body. In Section~\ref{s:prop-exp-surf}, we show that $G_+$ is relatively fat at scale $\alpha$, in the sense that every radius-$\alpha$ Hilbert ball centered on $\bd G_+$ contains a dimension-dependent fraction of its volume inside $G_+$. We then combine this with a localized relative isoperimetric inequality, which intuitively provides a lower bound on the boundary-area fraction cut off inside the ball in terms of the corresponding captured volume fraction. This yields the boundary-covering estimates required to apply the same duality scheme as in the volumetric case.

We apply the same expansion--duality strategy for covering by translates. We first consider the case of centrally symmetric bodies, viewing them from the perspective of covering by Minkowski balls. One again passes to an $\alpha$-expansion of the body $C$ to be covered, derives the relevant volumetric or boundary estimate for this expansion, and dualizes it. As in the Hilbert case, the dualized estimate is naturally expressed in the geometry induced by the expanded polar body $\polar{C_+}$ rather than the target geometry induced by $\polar{C}$, and the key step is therefore a polarity-expansion stability principle, which transfers the associated covering problem on $\polar{D}$ back to the correct ambient geometry.

The same framework is flexible enough to extend beyond the centrally symmetric setting. In the spirit of classical symmetrization arguments, we use two centrally symmetric derivatives of a convex body, the symmetric core and symmetric union (defined in Section~\ref{s:prelim}). These reductions pass the duality argument through these symmetric intermediaries, where the area and covering estimates can be determined. We then return the result to the original bodies. Combined with the expansion-based regularization and polarity-expansion stability principles developed above, this yields corresponding extensions to centered convex bodies in both the volumetric and boundary settings.

\subsection{Organization}

The remainder of the paper is organized as follows. Section~\ref{s:prelim} introduces notation and terminology, which will be used throughout the paper. Section~\ref{s:toolbox} develops tools for analyzing the geometry of regions of bounded radius. This introduces a mechanism based on Macbeath regions for estimating local volumes and areas. Next, Section~\ref{s:distortion} studies the interaction between polarity and expansion and proves the stability lemmas that control the change in ambient geometry after dualization. Section~\ref{s:prop-exp-vol} establishes the expansion properties needed for volumetric coverings. 

Given these tools, we next present our volumetric duality results. In Section~\ref{s:volume-cover} we present (known) volumetric duality of covering numbers for the centrally symmetric Minkowski case as a warm-up, and we then extend this to the Hilbert case, establishing the volumetric part of Theorem~\ref{thm:hilbert-cover-duality}. 

We then turn to boundary coverings. Section~\ref{s:rel-iso} states and proves localized relative isoperimetric inequalities. Section~\ref{s:prop-exp-surf} develops the additional expansion properties needed at the boundary. Finally, Section~\ref{s:boundary-cover} proves the boundary duality results for both the Hilbert case (Theorem~\ref{thm:hilbert-cover-duality}(ii)) and the case of covering by translates of centered bodies (Theorem~\ref{thm:mink-asym-boundary-cover-duality}).

Finally, proofs of selected technical results, including the Funk Holmes--Thompson polarity identities, are deferred to the appendix.

\section{Preliminaries} \label{s:prelim}

We present the notation and terminology used throughout the paper. Unless otherwise stated, we assume $d \geq 2$. We use $\inner{\cdot}{\cdot}$ to denote the standard inner product on $\RE^d$, and $\|\cdot\| = \sqrt{\inner{\cdot}{\cdot}}$ for the Euclidean norm. Let $O$ denote the origin, let $B_2^d$ denote the Euclidean unit ball centered at the origin, and let $S^{d-1}$ denote its boundary, the unit sphere. Given a linear subspace $E \subset \RE^d$ and any set $K \subset \RE^d$, let $\orthproj{K}{E}$ denote the orthogonal projection of $K$ onto $E$.

A \emph{convex body} in $\RE^d$ is a compact convex set with nonempty interior. Given a convex set $K$, we denote its boundary and interior by $\bd K$ and $\interior(K)$, respectively. For $\alpha \geq 0$, $\alpha K$ denotes uniform scaling of $K$ about the origin, and for $x \in \RE^{d}$, $K + x$ denotes translation of $K$ by $x$. For $u \in \RE^d$, the \emph{support function} of $K$ is \[
h_K(u) = \sup \{ \inner{u}{x} : x \in K \}.
\]

Given any nonempty set $K \subset \RE^d$, its \emph{polar}, denoted $\polar{K}$, is defined by
\[
	\polar{K}
		~ = ~ \{ y \in \RE^d \ST \inner{x}{y} \leq 1, \text{~for all $x \in K$} \}.
\]
The polar is always closed, convex, and contains the origin (see, e.g., Barvinok~\cite{Bar02}). If $K$ is a convex body with $O \in \interior(K)$, then $\polar{K}$ is a convex body with $O \in \interior(\polar{K})$ and $\polar{(\polar{K})} = K$. Moreover, polarity reverses inclusion; if $K_1 \subseteq K_2$, then $\polar{K_2} \subseteq \polar{K_1}$.

It will be useful to define a subspace restriction of the polar. For a linear subspace $E \subseteq \RE^d$ and a nonempty set $G\subseteq E$, we define the polar of $G$ in $E$ by
\[
    \polarIn[E]{G}
        ~ := ~ \{y\in E : \inner{x}{y} \leq 1, \text{~for all $x\in G$}\}.
\]
The following lemma states the well-known duality between slicing a convex body and projecting its polar (see \cite[Theorem 2.2.9 and Corollary 2.2.10]{Tho96}).

\begin{lemma}[Projection-Section Duality] \label{lem:slice}
Let $K \subseteq \RE^d$ be a convex body with $O \in \interior(K)$ and $E \subseteq \RE^d$ be a linear subspace. Then 
\[
    \polarIn[E]{(K \cap E)} 
        ~ = ~ \orthproj{\polar{K}}{E}.
\]
\end{lemma}

\subsection{Measures and Smoothness} \label{s:measures}

For $1 \leq k \leq d$, let $\lambda_k$ denote the $k$-dimensional Hausdorff measure on $\RE^d$. In particular, $\lambda_d$ coincides with the $d$-dimensional Lebesgue measure, and on any $k$-dimensional $C^1$ submanifold of $\RE^d$, $\lambda_k$ agrees with the standard $k$-dimensional surface area. Let $\omega_d = \lambda_d(B_2^d)$.

For a convex body $K$, its \emph{centroid} is defined in the usual manner as
\[
    \centroid(K) ~ := ~ \frac{1}{\lambda_d(K)}\int_K x\, d\lambda_d(x). 
\]
We say that $K$ is \emph{centered} if $\centroid(K) = O$. We say that $K$ is \emph{centrally symmetric about $z\in\RE^d$} if $K-z = -(K-z)$, and we say that $K$ is \emph{centrally symmetric} if it is centrally symmetric about $O$. 

Given a convex body $K$, a point $p \in \bd K$ is \emph{smooth} if $\bd K$ has a unique supporting hyperplane at $p$, or equivalently, a unique outer unit normal. For a smooth point $x \in \bd K$, let $\norm[K](x)$ denote the outer unit normal at $x$. It is well known that the set of nonsmooth points of $\bd K$ has $(d-1)$-dimensional Hausdorff measure zero (see \cite[Theorem~2.2.5]{Sch14}). Accordingly, whenever we integrate over $\bd K$ an expression depending on the outer unit normal or on the supporting hyperplane, it is understood to be evaluated at smooth points, which form a set of full $\lambda_{d-1}$-measure in $\bd K$. In particular, any inequality for a surface functional defined by such an integral and proved for piecewise $C^1$ surfaces applies verbatim to $\bd K$ for an arbitrary convex body $K$.

\subsection{Funk and Hilbert Geometries} \label{s:funk-hilbert}

Let $K\subset\RE^d$ be a convex body with nonempty interior. For distinct points $x, y \in \interior(K)$, let $y'$ denote the point where the ray from $x$ through $y$ intersects $\bd K$ (see Figure~\ref{f:funk-hilbert}(a)). The \emph{Funk distance} with respect to $K$, denoted $\distFunk[K](\cdot,\cdot)$, is 
\[
    \distFunk[K](x,y)
       ~ := ~ \ln \frac{\|x - y'\|}{\|y - y'\|},
\]
with $\distFunk[K](x,x) = 0$. The Funk distance is nonnegative, asymmetric, satisfies the triangle inequality, and is invariant under invertible affine transformations~\cite{PaT14}. Because of its asymmetry, this is often called a weak metric or a quasi-metric.

\begin{figure}[htbp]
  \centerline{\includegraphics[scale=0.40,page=1]{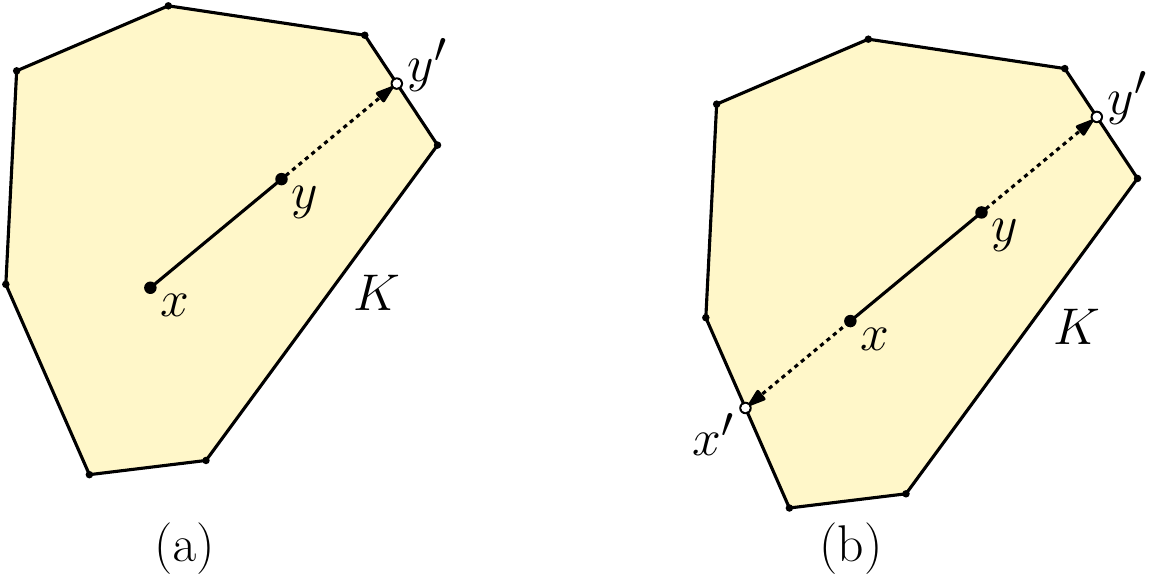}}
  \caption{(a) The Funk distance and (b) the Hilbert distance.}
  \label{f:funk-hilbert}
\end{figure}

The \emph{Hilbert distance} with respect to $K$, denoted $\distHilb[K](\cdot,\cdot)$, symmetrizes the Funk distance. Letting $x'$ be the point where the ray from $y$ through $x$ intersects $\bd K$,
\[
    \distHilb[K](x,y) 
        ~ := ~ \inv{2} \left( \distFunk[K](x,y) + \distFunk[K](y,x) \right) 
        ~ =  ~ \inv{2} \ln \frac{\|y-x'\|}{\|x-x'\|} \, \frac{\|x-y'\|}{\|y-y'\|}
\]
(see Figure~\ref{f:funk-hilbert}(b)). This defines a metric that is invariant under invertible projective transformations, since the logarithmic term is the cross ratio $(x, y; y', x')$. It is additive on collinear triples, that is, if $x,y,z$ are collinear and $y$ lies between $x$ and $z$, then $\distHilb[K](x,y) + \distHilb[K](y,z) = \distHilb[K](x,z)$. Given $x \in \interior(K)$ and nonnegative $r$, let $\ballHilb[K](x,r)$ denote the Hilbert metric ball of radius $r$ centered at $x$. It is well known that Hilbert balls are convex bodies~\cite[Section~18]{Bus55} (see also~\cite[Corollary~4.6]{Tro14}).

We next introduce the associated Finsler structures. Abstractly, a \emph{Finsler metric} on a manifold is a continuous function $F$ on the tangent bundle whose restriction to each tangent space $T_x$ is a (possibly asymmetric) norm. For each point $x$, the associated (closed) \emph{Finsler ball} in $T_x$ is the set
\[
     \{v \in T_x \ST F(x,v) \leq 1\}.
\]

Perhaps the most basic example is the Minkowski functional. Given a convex body $D$ in $\RE^d$ with $O \in \interior(D)$ and $u \in \RE^d$, the \emph{Minkowski functional} (or \emph{gauge}) induced by $D$ is defined by
\[
    \|u\|_D ~ := ~ \inf \{\lambda > 0 \ST u \in \lambda D\}.
\]
This functional is positively homogeneous and subadditive. It is a norm if and only if $D$ is centrally symmetric, and otherwise it is generally asymmetric. We may therefore view $\RE^d$ as a Finsler manifold by identifying each tangent space $T_x$ with $\RE^d$ and equipping it with the constant gauge $\|\cdot\|_D$. Under this identification, the Finsler unit ball at $x$ equals $D$, and the induced distance function is
\[
    \distMink[D](x,y) ~ := ~ \|y-x\|_D.
\]
The corresponding balls are scaled translates of $D$:
\[
    \ballMink[D](x,r) ~ = ~ x + rD, \qquad\text{for $r > 0$}.
\]
When $D$ is centrally symmetric, this is the usual Minkowski geometry with unit ball $D$. 

It is well known that both the Funk and Hilbert geometries induce Finsler structures~\cite{Tro14}. 
For the Funk Finsler structure, consider $x \in \interior(K)$ and identify $T_x$ with $\RE^d$. 
For $v \in T_x$, define
\[
    \finsFunk[K](x,v) ~ := ~ \|v\|_{K-x}.
\]
The unit ball, called the \emph{Funk Finsler ball} $\ballFunk[K](x)$, is just $K - x$ (see Figure~\ref{f:finsler}(a)). Thus, the local geometry at $x$ is the gauge geometry determined by $K-x$. 

\begin{figure}[htbp]
  \centerline{\includegraphics[scale=0.40]{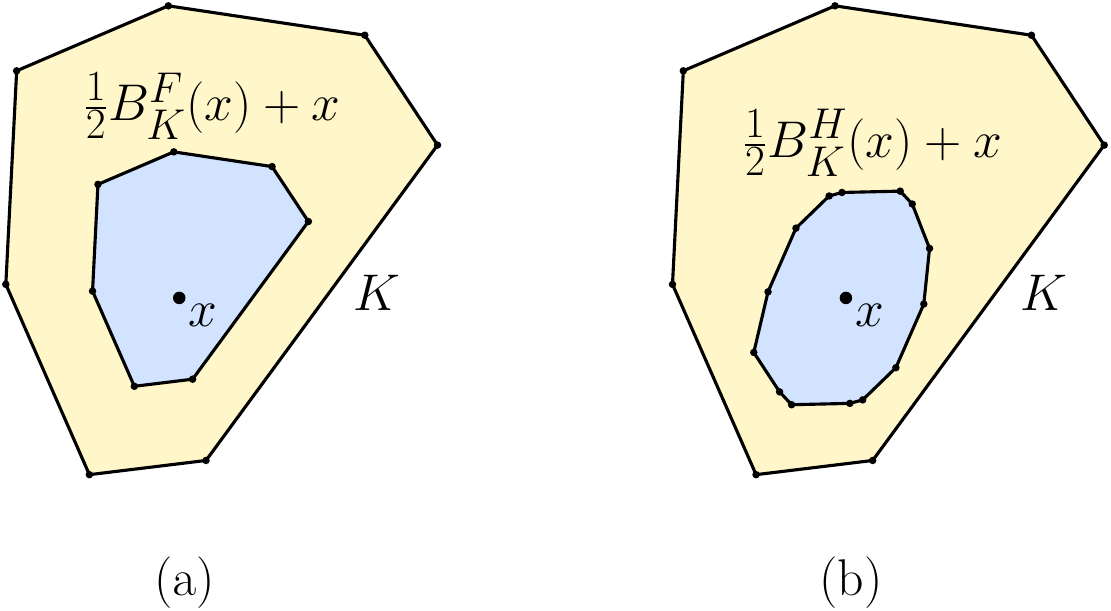}}
  \caption{The $\frac{1}{2}$-scaled Finsler balls in the (a) Funk and (b) Hilbert geometries (recentered on $x$).}
  \label{f:finsler}
\end{figure}

The Hilbert Finsler structure symmetrizes the Funk structure by taking the arithmetic mean. For $x \in \interior(K)$ and $v \in T_x$, define
\[
    \finsHilb[K](x,v) ~ := ~ \frac{1}{2} \big( \|v\|_{K-x} + \|{-v}\|_{K-x} \big).
\]
The resulting unit ball, $\ballHilb[K](x)$, called the \emph{Hilbert Finsler ball}, is centrally symmetric (see Figure~\ref{f:finsler}(b)).

\subsection{Covering numbers} \label{s:covering} 

Throughout the paper, we fix a localization radius $\RLocal \geq 8$. Whenever we invoke a local estimate in Hilbert geometry, the relevant radii are understood to be at most $\RLocal$, unless stated otherwise. The implicit constants in such estimates may depend on $\RLocal$, but are independent of the dimension and of the convex bodies involved. 

Since many of our estimates are sharp only up to factors that grow exponentially with the dimension, we adopt the following shorthand. We write $X \CLEQ Y$ if there exists a constant $c \geq 1$, independent of $d$, such that $X \leq c^{\SP d} Y$. We define $X \CGEQ Y$ by $Y \CLEQ X$, and write $X \CEQ Y$ if both inequalities hold. Any additional dependence of the implicit constant is indicated explicitly. 

Let $K$ be a convex body in $\RE^d$, let $\alpha > 0$, and let $U \subseteq \interior(K)$. We define the \emph{Hilbert covering number} of $U$ at scale $\alpha$ by 
\[
    \Ncov[K](U,\alpha)
        ~ := ~ \min \left\{ m \ST \exists\, x_1,\dots,x_m \in \interior(K),~ U \subseteq \bigcup_{i=1}^m \ballHilb[K](x_i, \alpha) \right\}.
\]
Given a convex body $G \subseteq \interior(K)$, define its \emph{Hilbert boundary covering number}, denoted $\Nbd[K](G,\alpha)$, to be $\Ncov[K](\bd G, \alpha)$.

We define the corresponding \emph{translative covering numbers} analogously. Given any convex body $D$ in $\RE^d$ with $O \in \interior(D)$, let $\alpha>0$, and let $U \subseteq \RE^d$. We define
\[
    \NcovMink[D](U,\alpha)
        ~ := ~ \min \left\{ m \ST \exists\, x_1,\dots,x_m \in \RE^d,~
        U \subseteq \bigcup_{i=1}^m (x_i + \alpha D) \right\}.
\]
Given a convex body $C$ in $\RE^d$, define
\[
    \NbdMink[D](C,\alpha) ~ := ~ \NcovMink[D](\bd C,\alpha).
\]
Observe that when $D$ is centrally symmetric, the covering bodies $x_i + \alpha D$ are $\alpha$-balls in the Minkowski geometry defined by $D$, and these coincide with the standard covering numbers in the Minkowski geometry. 

The following estimates will be used repeatedly. The translative covering estimate is elementary and holds at all scales, whereas the Hilbert estimate for small radii follows from the localization results proved later in Section~\ref{s:toolbox}. The proofs are deferred to Appendix~\ref{app:covering-lemmas}.

\begin{restatable}[Minkowski covering bound]{lemma}{lemMinkBallCover} \label{lem:mink-ball-cover} 
Let $D$ be a centrally symmetric convex body in $\RE^d$, and let $0 < r \leq r'$. Then, for any $U \subseteq \RE^d$, 
\[
    \NcovMink[D](U,r)
        ~ \CLEQ ~ \left( \frac{r'}{r} \right)^d \NcovMink[D](U,r'). 
\]
\end{restatable}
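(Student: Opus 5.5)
The plan is to cover each translate in an optimal cover at scale $r'$ by translates at scale $r$, and to bound how many are needed by a volume (packing) argument. The count for a single translate does not depend on $U$, so it multiplies directly.

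Let $m = \NcovMink[D](U,r')$ and fix translates $x_1 + r'D, \dots, x_m + r'D$ covering $U$. It suffices to show that a single body $r'D$ can be covered by at most $(1 + 2r'/r)^d$ translates of $rD$. Since $r \leq r'$, this is at most $(3r'/r)^d$, and the claim then follows with constant $c = 3$ by covering each $x_i + r'D$ separately and taking the union.

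For the single-body covering, take a maximal subset $\{y_1,\dots,y_k\} \subseteq r'D$ whose points are pairwise at $D$-distance greater than $r$, meaning $y_j - y_l \notin rD$ for $j \neq l$.

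\textbf{Covering.} By maximality, every point $y \in r'D$ satisfies $y - y_j \in rD$ for some $j$. Hence the translates $y_j + rD$ cover $r'D$.

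\textbf{Packing.} The translates $y_j + \frac{r}{2}D$ have pairwise disjoint interiors, and central symmetry is exactly what is used here. Suppose $z = y_j + \frac{r}{2}a = y_l + \frac{r}{2}b$ with $a, b \in D$. Then $y_j - y_l = \frac{r}{2}(b - a) \in rD$, because $-a \in D$ and $D$ is convex. This contradicts the separation of the $y_j$.

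\textbf{Volume comparison.} Each translate $y_j + \frac{r}{2}D$ lies in $(r' + \frac{r}{2})D$, by convexity and since $O \in D$. Comparing $\lambda_d$ gives
\[
k \left(\tfrac{r}{2}\right)^d \lambda_d(D) ~\leq~ \left(r' + \tfrac{r}{2}\right)^d \lambda_d(D).
\]
Hence $k \leq (1 + 2r'/r)^d$. The only point needing care is the symmetry used in the packing step; the rest is routine, and I do not expect any real obstacle.

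Finally, there is no requirement that the covering centers lie in $U$, so covering each $x_i + r'D$ separately incurs no loss. The resulting total is $\NcovMink[D](U,r) \leq 3^d (r'/r)^d \NcovMink[D](U,r')$.
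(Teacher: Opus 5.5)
Your proposal is correct and follows essentially the same route as the paper: refine an optimal $r'$-cover by covering each translate of $r'D$ with translates of $rD$ centered at a maximal $r$-separated subset of $r'D$, and bound that subset's size by packing the half-sized translates $y_j + \tfrac{r}{2}D$ inside $(r' + \tfrac{r}{2})D \subseteq \tfrac{3r'}{2}D$ and comparing volumes. You also spell out the use of central symmetry in the disjointness step, which the paper leaves implicit.
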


\begin{restatable}[Hilbert covering bound]{lemma}{lemHilbBallCover} \label{lem:hilb-ball-cover} 
Let $K$ be a convex body in $\RE^d$, and let $0 < r \leq r' \leq \frac{\RLocal}{2}$. Then, for any $U \subseteq \interior(K)$, 
\[
    \Ncov[K](U,r)
        ~ \CLEQ ~ \left( \frac{r'}{r} \right)^d \Ncov[K](U,r'). 
\]
\end{restatable}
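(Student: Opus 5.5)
The plan is to reduce covering at radius $r$ to covering each radius-$r'$ ball by radius-$r$ balls, so the whole task is a local one. Fix an optimal cover $U \subseteq \bigcup_{i=1}^m \ballHilb[K](x_i,r')$ with $m = \Ncov[K](U,r')$. It then suffices to show that each ball $\ballHilb[K](x,r')$, with $x \in \interior(K)$ and $r' \leq \RLocal/2$, can be covered by $\CLEQ (r'/r)^d$ Hilbert balls of radius $r$. Multiplying by $m$ gives the claim.

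For the local statement I will use a packing argument with volume.
\begin{itemize}
\item Let $\{y_j\}$ be a maximal $r$-separated subset (in $\distHilb[K]$) of $\ballHilb[K](x,r')$. By maximality, the balls $\ballHilb[K](y_j,r)$ cover $\ballHilb[K](x,r')$.
\item By separation, the balls $\ballHilb[K](y_j,r/2)$ are pairwise disjoint.
\item By the triangle inequality, each such ball lies in $\ballHilb[K](x,r'+r/2) \subseteq \ballHilb[K](x,\tfrac32 r')$, and $\tfrac32 r' \leq \RLocal$.
\item Hence the number of points is at most $\mu(\ballHilb[K](x,\tfrac32 r')) / \min_j \mu(\ballHilb[K](y_j,r/2))$ for any measure $\mu$.
\end{itemize}

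The key input is a bounded-radius volume comparison from Section~\ref{s:toolbox}, applied to a measure $\mu$ (Lebesgue or Holmes--Thompson). For all $z,z'$ with $\distHilb[K](z,z') \leq \RLocal$ and all $0<s\leq t\leq \RLocal$, it should give $\mu(\ballHilb[K](z',t)) \CLEQ (t/s)^d\,\mu(\ballHilb[K](z,s))$, with constants depending only on $\RLocal$.

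I would justify this via Macbeath regions. For radii at most $\RLocal$, the Hilbert ball $\ballHilb[K](z,s)$ is sandwiched, up to constant factors in the radius, between scaled Macbeath regions $M(z,c_1 s)$ and $M(z,c_2 s)$. A Macbeath region $M(z,\lambda) = z + \lambda\bigl((K-z)\cap(z-K)\bigr)$ has volume exactly $\lambda^d$ times that of $M(z,1)$. Moreover, $M(z,1)$ and $M(z',1)$ have comparable volumes, within $c^d$, when $z,z'$ are at bounded Hilbert distance, by the standard Macbeath expansion/containment lemma. Applying this with $z=y_j$, $z'=x$, $s=r/2$ and $t=\tfrac32 r'$ bounds the count by $\CLEQ (3r'/r)^d \CLEQ (r'/r)^d$.

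The main obstacle is the uniform sandwich between Hilbert balls and Macbeath regions with dimension-free constant factors in the radius. This is needed because any constant loss in the radius costs only $c^d$. I would take this from the localization results of Section~\ref{s:toolbox}, which the text says this lemma follows from. Note also that the cover centers $y_j$ lie in $\interior(K)$, as the definition requires.
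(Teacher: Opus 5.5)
Your proposal is correct and follows essentially the same argument as the paper. A maximal $r$-separated subset of $\ballHilb[K](x,r')$ gives a cover by radius-$r$ balls and a disjoint packing of radius-$r/2$ balls inside $\ballHilb[K](x,\tfrac32 r')$, and $r'\le\RLocal/2$ keeps all radii in the bounded regime. The only difference is that the paper counts with Holmes--Thompson volume via Lemma~\ref{lem:hilb-ball-growth}(i). Its center-independent estimate $\volHilb[K](\ballHilb[K](z,s)) \CEQ \omega_d s^d$ makes your separate comparison of Macbeath regions at $y_j$ and $x$ unnecessary; for Lebesgue measure that comparison needs the chaining in Lemma~\ref{lem:local-finsler-mink}, not a single application of Lemma~\ref{lem:mac-containment}.
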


\subsection{Holmes--Thompson Measures} \label{s:ht-vol-area}

Several definitions of volume and surface area exist in Finsler spaces. They involve scaling the Lebesgue measure at each point according to the local geometry as expressed through the Finsler ball. In the Holmes--Thompson approach~\cite{HoT79}, the scale factor is proportional to the Lebesgue volume of the polar of the Finsler ball.

Consider first the Minkowski geometry defined by a full-dimensional, centrally symmetric convex body $D$ in $\RE^d$. Let $\|\cdot\|_D$ denote the associated norm whose unit ball is $D$. The \emph{Minkowski Holmes--Thompson volume element} at $x \in \RE^d$ is defined to be
\[
    d \volMink[D](x) 
        ~ := ~ \frac{1}{\omega_d} \, \lambda_d(\polar{D}) \, d \lambda_d(x),
\]
where $d\lambda_d(x)$ denotes the Lebesgue volume element and $\omega_d$ is the Lebesgue volume of a $d$-dimensional Euclidean unit ball. The \emph{Minkowski Holmes--Thompson volume} of $U \subset \RE^d$ is
\[
    \volMink[D](U) 
        ~ = ~ \int_{x \in U} d \volMink[D](x) 
        ~ = ~ \frac{\lambda_d(\polar{D})}{\omega_d} \, \lambda_d(U).
\]

For a smooth $(d-1)$-dimensional surface $S$ and a point $x \in S$, we identify the tangent space $T_x$ with the corresponding $(d-1)$-dimensional subspace of $\RE^d$, equipped with Lebesgue measure $\lambda_{d-1}$. Recalling the restricted polar notation introduced just prior to Lemma~\ref{lem:slice}, the \emph{Minkowski Holmes--Thompson $(d-1)$-area element} is defined to be
\[
    d \areaMink[D](x) 
        ~ := ~ \frac{1}{\omega_{d-1}} \lambda_{d-1} \big( \polarIn[T_x]{(D \cap T_x)} \big) \, d \lambda_{d-1}(x).
\]
By Lemma~\ref{lem:slice}, this can be expressed equivalently as
\[
    d \areaMink[D](x) 
        ~ = ~ \frac{1}{\omega_{d-1}} \lambda_{d-1}\left(\orthproj{\polar{D}}{T_x} \right) \, d \lambda_{d-1}(x).
\]
The \emph{Minkowski Holmes--Thompson area} of $S$ is
\[
    \areaMink[D](S) ~ = ~ \int_{x \in S} d \areaMink[D](x).
\]

For the Hilbert geometry at $x \in \interior(K)$, let $B_x = \ballHilb[K](x)$ denote the Hilbert Finsler ball at $x$. Since $B_x$ is centrally symmetric, the \emph{Hilbert Holmes--Thompson volume} and \emph{surface-area elements} are defined by
\[
    d \volHilb[K](x) ~ := ~ d \volMink[B_x](x)
    \quad\text{and}\quad
    d \areaHilb[K](x) ~ := ~ d \areaMink[B_x](x),
\]
respectively. For the Funk geometry, the Holmes--Thompson volume and area elements are defined by the same local formulas, with $B_x = \ballFunk[K](x)$.

To simplify notation, we adopt the convention that the surface area of a convex body means the surface area of its boundary. That is, when $G$ is a convex body in $\RE^d$, we use $\areaMink[D](G)$, $\areaFunk[K](G)$, and $\areaHilb[K](G)$ to denote the Holmes--Thompson $(d-1)$-areas of $\bd G$. Similarly, when $S$ is a piecewise $C^1$ surface, these denote the intrinsic $(d-1)$-area of $S$. Unless otherwise stated, all references to volumes and surface areas are understood to be in the Holmes--Thompson sense.

A measure $\mu$ is \emph{monotonic under inclusion} if for any convex bodies $G,G'$ where $G \subseteq G'$, $\mu(G)\leq \mu(G')$. A useful feature of Holmes--Thompson surface area is its monotonicity under inclusion.  

\begin{lemma} \label{lem:HT-area-monotone}  
The Holmes--Thompson volumes and surface areas for both Minkowski geometries and Hilbert geometries are monotone under inclusion.
\end{lemma}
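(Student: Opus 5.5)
Four claims need checking: Minkowski volume, Minkowski area, Hilbert volume and Hilbert area, each monotone for convex bodies $G \subseteq G'$. The two volume claims are immediate, so the substance lies in the area claims.

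\emph{Volumes.} In the Minkowski case $\volMink[D](U) = \frac{\lambda_d(\polar{D})}{\omega_d}\lambda_d(U)$, a constant multiple of Lebesgue measure. In the Hilbert case $\volHilb[K](U)$ is the integral over $U$ of a nonnegative density, since the Lebesgue volume of the polar of the Finsler ball is nonnegative. In both cases monotonicity follows because $U \subseteq U'$ implies $\int_U f \le \int_{U'} f$ whenever $f \ge 0$.

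\emph{Minkowski area.} Write the area density at a smooth boundary point with unit normal $u$ as $\frac{1}{\omega_{d-1}}\lambda_{d-1}(\orthproj{\polar{D}}{u^\perp})$. Projection onto $u^\perp$ is a zonoid-type quantity:
\[
\lambda_{d-1}\bigl(\orthproj{L}{u^\perp}\bigr) ~=~ \tfrac12\int_{\bd L}|\inner{u}{n_L(y)}|\,d\lambda_{d-1}(y)
\]
for $L=\polar{D}$. Hence $F(u):=\lambda_{d-1}(\orthproj{\polar{D}}{u^\perp})$, extended $1$-homogeneously to $\RE^d$, is a positive combination of the seminorms $|\inner{\cdot}{n}|$. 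It is therefore an even convex function, namely the support function of the projection body $\Pi\polar{D}$. The Minkowski area then equals
\[
\areaMink[D](G) ~=~ \frac{1}{\omega_{d-1}}\int_{\bd G} h_{\Pi\polar{D}}(n_G(x))\,d\lambda_{d-1}(x) ~=~ \frac{d}{\omega_{d-1}}\,V(G,\dots,G,\Pi\polar{D}),
\]
which is a mixed volume. Mixed volumes are monotone in each argument under inclusion, which gives the claim. An elementary alternative avoids mixed volumes. Apply the divergence-type (Cauchy) argument: nearest-point projection $\bd G'\to\bd G$ is $1$-Lipschitz, or more cleanly, the integral $\int_{\bd G}F(n_G)$ with $F$ even, convex and $1$-homogeneous is monotone. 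This follows by cutting $G'$ down to $G$ by successive half-spaces, since each cut replaces a cap surface by a flat facet and the triangle inequality for $F$, via the divergence theorem applied to the cap, shows the area does not increase.

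\emph{Hilbert area.} This is the main obstacle, because the density now depends on the point $x$ through $B_x$. The plan is to reuse the half-space cutting argument, which works pointwise in the normal but must now handle an $x$-dependent integrand. Use $G\subseteq G'$ and approximate $G$ by polytopes, so it suffices to cut $G'$ by one hyperplane $H$ at a time. The cut removes the cap $\bd G'\cap H^+$ and adds the flat piece $G'\cap H$. We need
\[
\int_{G'\cap H}\Phi(x,u_H)\,d\lambda_{d-1} ~\le~ \int_{\bd G'\cap H^+}\Phi(x,n(x))\,d\lambda_{d-1},
\qquad \Phi(x,u)=\tfrac{1}{\omega_{d-1}}\lambda_{d-1}\bigl(\orthproj{\polar{B_x}}{u^\perp}\bigr).
\]
For this I would invoke the known fact that Holmes--Thompson area in a projective Finsler metric (Hilbert geometry, whose geodesics are straight lines) satisfies a Crofton formula: area equals a positive measure of the hyperplanes hitting the surface, counted with multiplicity (Álvarez Paiva--Fernandes). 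Every hyperplane meeting $\partial G$ must meet $\partial G'$, and convexity gives multiplicity at most $2$ generically for both, so monotonicity follows. If citing Crofton is acceptable, this is the cleanest route for both the Minkowski and the Hilbert area cases simultaneously. I would present the Crofton argument as the main proof and the mixed-volume argument as a check for the Minkowski case.
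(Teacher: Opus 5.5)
Your treatment of volumes matches the paper, and so does your decision to derive area monotonicity from a Crofton representation with a nonnegative measure. The paper likewise only cites such representations (Schneider for Minkowski spaces; Schneider and Vernicos--Walsh for Hilbert geometries) and then runs the standard Crofton argument. Your identity $\areaMink[D](G)=\frac{d}{\omega_{d-1}}V(G,\dots,G,\Pi\polar{D})$, combined with monotonicity of mixed volumes, is a correct, self-contained proof of the Minkowski area case that the paper does not use.

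The Crofton step itself, however, is misstated for $d\ge 3$ and must be fixed.

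\begin{itemize}
\item \emph{Lines, not hyperplanes.} Holmes--Thompson $(d-1)$-area in a projective Finsler geometry is represented by a nonnegative measure $\mu$ on the space of \emph{lines}: $\area(S)=\int\#(S\cap\ell)\,d\mu(\ell)$. A hyperplane meets a hypersurface in a $(d-2)$-dimensional set, so ``counted with multiplicity'' has no meaning. Moreover, a measure of the hyperplanes hitting $G$ behaves like mean width rather than surface area, already in Euclidean space, so its monotonicity proves nothing here. Your own zonoid formula shows the correct structure: $\int_{\partial G}|\langle n_G,v\rangle|\,d\lambda_{d-1}=2\lambda_{d-1}(G\,|\,v^{\perp})$ is the integral of $\#(\ell\cap\partial G)$ over the lines $\ell$ parallel to the unit vector $v$.

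\item \emph{The needed comparison.} With lines the argument works, but ``at most $2$ for both'' is not the inequality you need. You need $\#(\ell\cap\partial G)\le\#(\ell\cap\partial G')$ for $\mu$-a.e.\ $\ell$. This holds whenever the left side is finite. A line crossing $\partial G$ twice passes through $\interior(G)\subseteq\interior(G')$, so it crosses $\partial G'$ twice. A line touching $G$ at one point still meets $G'$, hence meets $\partial G'$. Lines with infinite count form a $\mu$-null set because $\area(\partial G)<\infty$.

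\item \emph{Regularity.} The \'{A}lvarez Paiva--Fernandes formula is stated for smooth projective Finsler metrics, whereas the Hilbert metric of an arbitrary convex body $K$ is only continuous. You should either cite a version valid for all convex $K$, as the paper does, or approximate $K$ by smooth strictly convex bodies and pass to the limit.
\end{itemize}
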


\begin{proof}
The monotonicity of volumes is trivially true. To see that the surface area measures are monotonic, observe that in both settings, Holmes--Thompson $(d-1)$-area admits a Crofton representation with a positive Crofton measure. For the Minkowski case, see Schneider~\cite[Theorem~5.1]{Sch05}. For the Hilbert case, see Vernicos--Walsh~\cite[Lemma~7]{VeW21} and Schneider~\cite[Theorem~2]{Sch01b}. The conclusion, therefore, follows from the standard Crofton argument, as in the proof of \cite[Lemma~7]{VeW21}. 
\end{proof}

\subsection{Duality of Holmes--Thompson Measures} \label{s:measure-inputs}

In this section, we present the measure-theoretic identities that will be used later. Proofs are omitted here, since these results are available in the literature. The following lemma is due to Holmes and Thompson \cite{HoT79}.

\begin{lemma}[Duality of Minkowski Holmes--Thompson measures] \label{lem:HT-mink-polarity}
Let $C$ and $D$ be centrally symmetric convex bodies in $\RE^d$. Then:
\begin{enumerate}
\item[$(i)$] $\volMink[D](C) = \volMink[\polar{C}](\polar{D})$ and
\item[$(ii)$] $\areaMink[D](C) = \areaMink[\polar{C}](\polar{D})$.
\end{enumerate}
\end{lemma}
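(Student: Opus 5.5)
Part (i) reduces to a direct computation from the definitions. By definition,
\[
    \volMink[D](C) ~ = ~ \frac{\lambda_d(\polar{D})\,\lambda_d(C)}{\omega_d}
    \qquad\text{and}\qquad
    \volMink[\polar{C}](\polar{D}) ~ = ~ \frac{\lambda_d(\polar{(\polar{C})})\,\lambda_d(\polar{D})}{\omega_d}.
\]
Since $C$ is a centrally symmetric convex body, it contains the origin in its interior, so $\polar{(\polar{C})} = C$ (as recalled in Section~\ref{s:prelim}). The two expressions therefore coincide, and no further argument is required.

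For part (ii), I will rewrite both areas as mixed volumes and apply the symmetry of mixed volumes. First take the area of $\bd C$ in the geometry of $D$. At a smooth point $x \in \bd C$ with outer normal $n = \norm[C](x)$, the area element uses $\lambda_{d-1}(\orthproj{\polar{D}}{n^\perp})$. For a convex body $L$, Cauchy's projection formula combined with the representation of the mixed volume $V(L,\dots,L,B_2^d)$ does not directly give this integral, so the cleaner route is the standard identity
\[
    \int_{S^{d-1}} \lambda_{d-1}\bigl(\orthproj{L}{u^\perp}\bigr)\, dS_C(u) ~ = ~ c_d\, V(C[d-1], \Pi L)\cdot(\text{const}),
\]
where $\Pi L$ is the projection body with support function $h_{\Pi L}(u) = \lambda_{d-1}(\orthproj{L}{u^\perp})$. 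Using $V(C[d-1],M) = \frac{1}{d}\int h_M\, dS_C$, this gives
\[
    \areaMink[D](C) ~ = ~ \frac{d}{\omega_{d-1}}\, V(C[d-1],\Pi \polar{D}).
\]
I then use the classical symmetry $V(K[d-1],\Pi L) = V(L[d-1],\Pi K)$, which holds because both sides equal $\frac{1}{2}\int\int |\inner{u}{v}|\, dS_K(u)\, dS_L(v)$ by Cauchy's formula $h_{\Pi L}(u) = \frac12\int |\inner{u}{v}|\,dS_L(v)$. Applying it with $K=C$ and $L=\polar{D}$ yields
\[
    V(C[d-1],\Pi\polar{D}) ~ = ~ V(\polar{D}[d-1],\Pi C).
\]
The right-hand side is exactly $\frac{\omega_{d-1}}{d}\,\areaMink[\polar{C}](\polar{D})$, because $\polar{(\polar{C})} = C$. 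This proves (ii).

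The step that needs care is justifying that the Holmes--Thompson area integral over $\bd C$ equals $\int h_{\Pi\polar{D}}\,dS_C$. This follows because the surface area measure $S_C$ is the pushforward of $\lambda_{d-1}$ on the smooth part of $\bd C$ under the Gauss map, and the nonsmooth points have measure zero, as noted in Section~\ref{s:measures}. Central symmetry is used only to guarantee $O\in\interior(C)$ and $O\in\interior(D)$, so that the polars are convex bodies and the bipolar identity holds.
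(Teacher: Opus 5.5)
Your proof is correct. The paper gives no argument for this lemma: it attributes the result to Holmes and Thompson~\cite{HoT79} and omits the proof. So your write-up is a self-contained proof rather than an alternative to one in the text.

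Part~(i) is indeed just bipolarity. In part~(ii), the mixed-volume and projection-body language is a wrapper around the symmetric double integral
\[
    \int_{\bd C}\lambda_{d-1}\bigl(\orthproj{\polar{D}}{\norm[C](y)^\perp}\bigr)\,d\lambda_{d-1}(y)
    ~=~ \frac{1}{2}\int_{\bd C}\int_{\bd\polar{D}}\bigl|\inner{\norm[C](y)}{\norm[\polar{D}](x)}\bigr|\,d\lambda_{d-1}(x)\,d\lambda_{d-1}(y),
\]
which comes from Cauchy's projection formula and Tonelli. This is the same mechanism the paper uses for the Funk identity in Appendix~\ref{app:funk-polarity}. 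There the projected body is $P_x(\polar{K})$, which brings in Jacobian factors that are absent in the constant-norm case. It is also equivalent to the Holmes--Thompson Cauchy formula quoted as Lemma~\ref{lem:mink-cauchy-HT}, whose right-hand side is $\areaMink[\polar{C}](\polar{D})$ after bipolarity.

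Two cosmetic fixes are needed:
\begin{itemize}
\item Both sides of $V(K[d-1],\Pi L)=V(L[d-1],\Pi K)$ equal $\frac{1}{2d}\iint|\inner{u}{v}|\,dS_K(u)\,dS_L(v)$, not $\frac12\iint\cdots$. This is harmless, since only the symmetry is used.
\item The display with ``$c_d\,V(\cdots)\cdot(\text{const})$'' should be replaced by the exact identity $\int h_{\Pi L}\,dS_C = d\,V(C[d-1],\Pi L)$, which you use immediately afterwards.
\end{itemize}

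Your closing remark is also accurate: the computation uses central symmetry only to place the origin in the interiors of $C$ and $D$.
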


Faifman established analogous results for the Funk geometry \cite{Fai24}. For completeness, we include proofs in Appendix~\ref{app:funk-polarity}. The volume proof (part~(i)) serves as a warm-up, and the area proof (part~(ii)) is elementary and avoids the symplectic machinery used in prior work.

\begin{restatable}[Duality of Funk Holmes--Thompson measures]{lemma}{funkHTDuality} \label{lem:funk-HT-polarity}
Let $G$ and $K$ be convex bodies in $\RE^d$ with $O \in \interior(G)$ and $G \subset \interior(K)$. Then:
\begin{enumerate}
\item[$(i)$] $\volFunk[K](G) = \volFunk[\polar{G}](\polar{K})$ and
\item[$(ii)$] $\areaFunk[K](G) = \areaFunk[\polar{G}](\polar{K})$.
\end{enumerate}

\end{restatable}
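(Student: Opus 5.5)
Both identities follow from one change of variables on phase space. Represent each Funk Holmes--Thompson quantity as the Lebesgue measure (or a codimension-two surface measure) of a set of pairs (point, dual vector). Then exhibit an explicit map carrying the set for $(K,G)$ onto the set for $(\polar{G},\polar{K})$ and preserving the relevant measure.

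\emph{Volume.} For $x\in\interior(K)$,
\[
(K-x)^{\circ}=\{\xi : h_K(\xi)\le 1+\inner{x}{\xi}\}.
\]
Hence
\[
\omega_d\,\volFunk[K](G)=\lambda_{2d}(\Omega),\qquad \Omega=\{(x,\xi): x\in G,\ h_K(\xi)\le 1+\inner{x}{\xi}\}.
\]
Similarly $\omega_d\,\volFunk[\polar{G}](\polar{K})=\lambda_{2d}(\Omega^*)$, where
\[
\Omega^*=\{(\eta,y):\eta\in\polar{K},\ h_{\polar{G}}(y)\le 1+\inner{\eta}{y}\}.
\]
Set $t=1+\inner{x}{\xi}$, which is positive on $\Omega$ because $t\ge h_K(\xi)>0$ for $\xi\neq 0$. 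Define
\[
\Phi(x,\xi)=(\eta,y)=(\xi/t,\; t x).
\]
Then $\inner{\eta}{y}=\inner{x}{\xi}$, so $\Phi$ is an involution-type bijection with inverse of the same form. Checking that $\Phi$ maps $\Omega$ into $\Omega^*$:
\begin{itemize}
\item $\eta\in\polar{K}$ because $h_K(\xi)/t\le 1$;
\item $h_{\polar{G}}(y)=t\,\|x\|_G\le t=1+\inner{\eta}{y}$ because $x\in G$.
\end{itemize}
The symmetric argument gives surjectivity onto $\Omega^*$.

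For the Jacobian, factor $\Phi$ into two steps.
\begin{itemize}
\item $(x,\xi)\mapsto(x,\eta)$: for fixed $x$ this is projective, with Jacobian $t^{-(d+1)}$.
\item $(x,\eta)\mapsto(y,\eta)$, using $t=1/(1-\inner{x}{\eta})$: for fixed $\eta$ this is projective, with Jacobian $(1-\inner{x}{\eta})^{-(d+1)}=t^{d+1}$.
\end{itemize}
The product is $1$, so $\lambda_{2d}(\Omega)=\lambda_{2d}(\Omega^*)$, which proves (i).

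\emph{Area.} I would first record a cleaner invariance: $\Phi$ is a symplectomorphism for $\omega=\sum_i dx_i\wedge d\xi_i$. Put $a=\inner{\xi}{dx}$ and $b=\inner{x}{d\xi}$, so $dt=a+b$. Expanding $\sum_i d(tx_i)\wedge d(\xi_i/t)$ gives
\[
dx\wedge d\xi-\tfrac1t\,a\wedge dt+\tfrac1t\,dt\wedge b=\omega-\tfrac1t\,dt\wedge dt=\omega.
\]
This also re-proves (i), since $\omega^d/d!$ is Lebesgue measure.

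Next I would express the area as a symplectic integral. By Lemma~\ref{lem:slice}, the area density at a smooth $x\in\bd G$ with normal $n$ is $\lambda_{d-1}\big(\orthproj{(K-x)^{\circ}}{n^{\perp}}\big)$. By Cauchy's projection formula this equals
\[
\tfrac12\int_{\bd (K-x)^{\circ}}|\inner{n}{\nu(\xi)}|\,d\lambda_{d-1}(\xi).
\]
So $2\omega_{d-1}\,\areaFunk[K](G)$ is the integral of $|\inner{n_G(x)}{\nu(\xi)}|\,dx\,d\xi$ over the $(2d-2)$-dimensional set
\[
\Sigma=\{(x,\xi): x\in\bd G,\ h_K(\xi)=1+\inner{x}{\xi}\}.
\]
The key linear-algebra lemma I would prove is this. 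For a product $H_1\times H_2$ of hyperplanes with normals $n,\nu$, the restriction of $|\omega^{d-1}|/(d-1)!$ equals $|\inner{n}{\nu}|$ times the product of the $(d-1)$-dimensional Lebesgue measures. I would check it by choosing coordinates adapted to $n$ and computing the one surviving minor. Since $\Sigma$ is locally, at a.e.\ point, the product of $\bd G$ with the graph of $\bd(K-x)^{\circ}$ over $x$, I would verify that the $x$-dependence of the fiber does not contribute. This should hold because the fiber's variation is tangent to the Lagrangian directions killed by $\omega^{d-1}$, but it must be checked.

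The map $\Phi$ carries $\Sigma$ onto the analogous set $\Sigma^*$, with equality cases preserved by the computation above. Since $\Phi$ preserves $\omega$, it preserves $|\omega^{d-1}|$ on $\Sigma$, and (ii) follows.

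The main obstacle is this identification of the Cauchy-formula density with $|\omega^{d-1}|/(d-1)!$ on the curved, non-product set $\Sigma$, including the a.e.\ regularity needed at nonsmooth points. If the form-theoretic route proves awkward, I would fall back on approximating $G$ and $K$ by smooth strictly convex bodies and pass to the limit, using continuity of Holmes--Thompson area.
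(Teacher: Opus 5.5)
Your proposal is correct. For part~(i) it is the paper's argument in different packaging; for part~(ii) it takes a genuinely different route.

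\textbf{Part (i).} Your $\Phi$ factors through $G\times\polar{K}$ as $(x,\xi)\mapsto(x,P_x^{-1}(\xi))$ followed by $(x,\eta)\mapsto(P_\eta(x),\eta)$. The cancellation $t^{-(d+1)}t^{d+1}=1$ is exactly the paper's identity $|\det J_{x,y}|=|\det J_{y,x}|$, both sides being $(1-\inner{x}{y})^{-(d+1)}$, combined with Tonelli on $G\times\polar{K}$.

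\textbf{Part (ii).} The paper deliberately avoids differential forms. It applies Cauchy's formula to $P_x(\polar{K})$, pulls the oriented area element back to $\bd\polar{K}$ via $\cofactor(J_{x,y})$, and uses $J_{y,x}=J_{x,y}^T$ to see that the integrand on $\bd G\times\bd\polar{K}$ is symmetric. You instead show that $\Phi$ is a symplectomorphism carrying $\Sigma$ onto $\Sigma^*$, and identify Holmes--Thompson area with $\frac{1}{2\omega_{d-1}}\int_\Sigma|\omega^{d-1}|/(d-1)!$. This is essentially Faifman's route, which the paper explicitly sidesteps.

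The step you flag does go through. Take a point of $\Sigma$ with normal $n$ to $\bd G$ and normal $\nu$ to the fiber. There $T\Sigma$ is spanned by vectors $(v,c(v)\nu)$ with $v\in n^\perp$ and some scalar $c(v)$, together with vectors $(0,w)$ with $w\in\nu^\perp$. The vertical vectors span an isotropic subspace, so the Gram matrix of $\omega$ has a zero diagonal block. Its Pfaffian is therefore $\pm\det(\inner{v_j}{w_k})=\pm\inner{n}{\nu}$ for orthonormal $v_j,w_k$. The components $c(v)\nu$ enter only the other diagonal block and drop out.

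No smoothing is needed either. $\Sigma$ and $\Sigma^*$ are the images of $\bd G\times\bd\polar{K}$ under $(x,\eta)\mapsto(x,P_x(\eta))$ and $(x,\eta)\mapsto(\eta,P_\eta(x))$, respectively. In these coordinates $\Phi$ is the identity, so the area formula applies on a product of Lipschitz manifolds. This also shows that the paper's symmetric-integrand computation is your symplectic invariance written in the product parametrization.

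\textbf{Comparison.} Your version buys conceptual unity: one map yields both (i) and (ii), and it adapts to Holmes--Thompson measures of other submanifolds. The paper's version buys elementarity: it needs only linear algebra on a product domain, and no identification of area with a symplectic density on the non-product set $\Sigma$.
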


For Hilbert geometry, Faifman obtained the following polarity bounds for Holmes--Thompson volume and area by combining the above result with Hilbert--Funk comparison estimates. These bounds are the measure-theoretic input used in our Hilbert covering arguments.

\begin{lemma}[Duality of Hilbert Holmes--Thompson measures] \label{lem:HT-hilb-polarity}
Let $G$ and $K$ be convex bodies in $\RE^d$ with $O \in \interior(G)$ and $G \subset \interior(K)$. Let
$\beta_k := \binom{2k}{k}/2^k$ for $k \geq 1$. Then: 
\begin{enumerate}
\item[$(i)$] $\beta_d^{-1} \cdot \volHilb[\polar{G}](\polar{K}) ~ \leq ~ \volHilb[K](G) ~ \leq ~ \beta_d \cdot \volHilb[\polar{G}](\polar{K})$ and 
\item[$(ii)$] $\beta_{d-1}^{-1} \cdot \areaHilb[\polar{G}](\polar{K}) ~ \leq ~ \areaHilb[K](G) ~ \leq ~ \beta_{d-1} \cdot \areaHilb[\polar{G}](\polar{K})$. 
\end{enumerate}
\end{lemma}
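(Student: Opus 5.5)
The plan is to prove a pointwise comparison between the Hilbert and Funk Holmes--Thompson densities and then sandwich the exact Funk duality of Lemma~\ref{lem:funk-HT-polarity} between two such comparisons. Concretely, I aim to show that for every convex body $K$ and every $x \in \interior(K)$,
\[
    d\volFunk[K](x) ~\leq~ d\volHilb[K](x) ~\leq~ \beta_d \, d\volFunk[K](x),
\]
and, for every hyperplane $T_x$,
\[
    d\areaFunk[K](x) ~\leq~ d\areaHilb[K](x) ~\leq~ \beta_{d-1} \, d\areaFunk[K](x).
\]
Integrating over $G$ (resp.\ $\bd G$) and applying Lemma~\ref{lem:funk-HT-polarity} then gives the upper bound of (i):
\[
    \volHilb[K](G) ~\leq~ \beta_d\,\volFunk[K](G) ~=~ \beta_d\,\volFunk[\polar{G}](\polar{K}) ~\leq~ \beta_d\,\volHilb[\polar{G}](\polar{K}).
\]
The lower bound follows by the same chain with the roles of the two sides exchanged, using $\polar{(\polar{G})}=G$ and $\polar{(\polar{K})}=K$, and noting that the hypotheses are preserved ($O \in \interior(\polar{K})$ and $\polar{K} \subset \interior(\polar{G})$). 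Part (ii) is proved identically with areas in place of volumes.

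\textbf{Step 1: describe the polar of the Hilbert Finsler ball.} Fix $x$ and let $P = \polar{(K-x)}$, which is a convex body containing $O$ in its interior since $x \in \interior(K)$. The gauge of $K-x$ is the support function of $P$, so
\[
    \finsHilb[K](x,v) ~=~ \tfrac12\big(h_P(v)+h_P(-v)\big) ~=~ h_{\frac12(P-P)}(v).
\]
Hence $\polar{(\ballHilb[K](x))} = \tfrac12(P-P)$, while $\polar{(\ballFunk[K](x))} = P$. This identity is the key step. I would verify it via the bipolar theorem together with the correspondence between gauges and support functions. Its correctness underlies everything else, so it is the main point to check carefully. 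In particular, one should confirm that the Funk Holmes--Thompson elements are indeed $\lambda_d(P)/\omega_d$ and $\lambda_{d-1}(\orthproj{P}{T_x})/\omega_{d-1}$, even though $P$ need not be symmetric.

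\textbf{Step 2: compare volumes via Rogers--Shephard and Brunn--Minkowski.} The Brunn--Minkowski inequality gives $\lambda_d(P-P) \geq 2^d \lambda_d(P)$. The Rogers--Shephard inequality gives $\lambda_d(P-P) \leq \binom{2d}{d}\lambda_d(P)$. Dividing by $2^d$ yields
\[
    \lambda_d(P) ~\leq~ \lambda_d\big(\tfrac12(P-P)\big) ~\leq~ \beta_d\,\lambda_d(P),
\]
which is the volume density comparison.

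\textbf{Step 3: compare areas.} Orthogonal projection is linear and therefore commutes with Minkowski combinations, so
\[
    \orthproj{\tfrac12(P-P)}{T_x} ~=~ \tfrac12(Q-Q), \qquad \text{where } Q = \orthproj{P}{T_x}.
\]
Here $Q$ is a $(d-1)$-dimensional convex body in $T_x$. Applying Brunn--Minkowski and Rogers--Shephard inside $T_x$ gives
\[
    \lambda_{d-1}(Q) ~\leq~ \lambda_{d-1}\big(\tfrac12(Q-Q)\big) ~\leq~ \beta_{d-1}\,\lambda_{d-1}(Q).
\]
Using the projection form of the area element (from Lemma~\ref{lem:slice}), this gives the pointwise area comparison at every smooth point of $\bd G$. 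Since nonsmooth points have measure zero, integrating completes part (ii).
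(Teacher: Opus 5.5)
Your proposal is correct and follows the route the paper indicates. The paper omits the proof and attributes it to Faifman, who sandwiches the exact Funk duality of Lemma~\ref{lem:funk-HT-polarity} between pointwise Hilbert--Funk density comparisons, and that is what you do. Your identification $(B^H_K(x))^{\circ} = \tfrac12(P-P)$ with $P = (K-x)^{\circ}$, combined with Brunn--Minkowski and Rogers--Shephard (applied in $T_x$ for areas, since projection commutes with Minkowski combinations), recovers exactly the constants $\beta_d$ and $\beta_{d-1}$ in the statement.
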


\section{Localization and Comparison Tools}\label{s:toolbox}

A key concept we will use is that, locally, Hilbert geometry can be approximated by Minkowski geometry. In particular, within Hilbert balls of bounded radius, the Hilbert--Finsler unit balls are uniformly comparable to a fixed reference body derived from a Macbeath region. As a consequence, local Hilbert Holmes--Thompson volume and area are comparable to the corresponding Minkowski Holmes--Thompson quantities for the associated reference norm. This section develops local geometric comparison estimates that will be used later. 

\subsection{Macbeath Regions and Hilbert Balls}

We begin by introducing Macbeath regions and present their relation to Hilbert balls. Macbeath regions provide a convenient local model for the geometry of a convex body near an interior point. For a convex body $K$ and a point $x\in \interior(K)$, the \emph{Macbeath region} at $x$ is 
\[
    M_K(x)
        ~ := ~ K \cap (2 x - K).
\]
This is a convex body that is centrally symmetric about $x$ (see Figure~\ref{f:macbeath}(a)). More generally, for $\alpha > 0$, define the scaled Macbeath region
\[
    M_K(x,\alpha)
        ~ :=~ x + \alpha \bigl( (K-x) \cap (x-K) \bigr),
\]
which is obtained from $M_K(x)$ by scaling about $x$ by a factor of $\alpha$ (see Figure~\ref{f:macbeath}(b)).

\begin{figure}[htbp]
  \centerline{\includegraphics[scale=0.40]{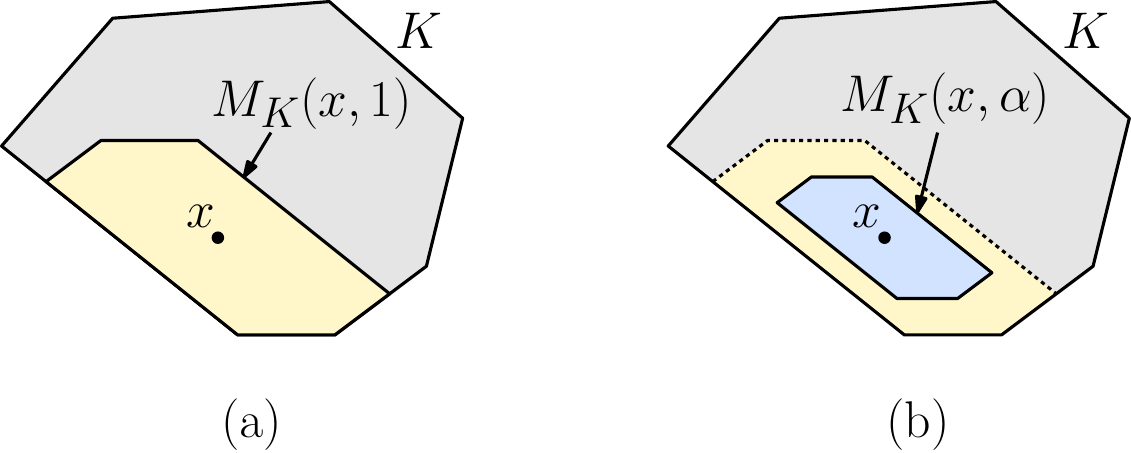}}
  \caption{Macbeath regions} \label{f:macbeath}
\end{figure}

Fix $x \in \interior(K)$ and set $B := \ballHilb[K](x,\RLocal)$. All local statements below are made on $B$ or on a smaller concentric ball $\ballHilb[K](x,r)$ with $0 < r \leq \RLocal$. 

We shall use two standard relations between Macbeath regions and bounded Hilbert balls. The first is the usual overlap-containment property for shrunken Macbeath regions~\cite{ELR70, BCP93}. The second is a bounded-radius comparison between Hilbert balls and scaled Macbeath regions~\cite{AbM18, VeW21}. 

\begin{lemma}[Macbeath expansion-containment]\label{lem:mac-containment} 
Let $x$ and $y$ be two points in a convex body $K$ such that $M_K(x,1/5)  \cap M_K(y,1/5) \neq \emptyset$. 
Then 
\[
    M_K(y, 1/5) ~ \subseteq ~ M_K(x, 1). 
\]
\end{lemma}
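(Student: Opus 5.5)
The plan is to work with the symmetric body $M_x := (K-x)\cap(x-K)$ and its gauge $\|\cdot\|_{M_x}$. Here $M_K(x,\lambda) = x + \lambda M_x$. Everything reduces to two convexity facts: a comparison $M_y \subseteq (1+t)M_x$, where $t := \|y-x\|_{M_x}$, and a bound on $t$ coming from the overlap hypothesis. The conclusion then follows from the triangle inequality for the gauge. I will assume $x,y \in \interior(K)$, which is needed for $M_K(x,\cdot)$ and $M_K(y,\cdot)$ to be full-dimensional; then $t<\infty$.

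\textbf{Step 1 (comparison of Macbeath bodies).} Write $y = x + a$ with $t = \|a\|_{M_x}$. If $t=0$ then $y=x$ and there is nothing to prove, so assume $t>0$. Let $s \in M_y$, so $y \pm s \in K$. Since $a/t \in M_x$, we have $x - a/t \in K$. Take the convex combination with weight $\theta = 1/(1+t)$:
\[
\theta (y+s) + (1-\theta)(x - a/t) ~=~ x + \theta a - (1-\theta)a/t + \theta s ~=~ x + \tfrac{s}{1+t}.
\]
The $a$-terms cancel because $\theta = (1-\theta)/t$, so this point lies in $K$. The same argument with $y - s$ and $x + a/t$ shows $x - s/(1+t) \in K$. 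Hence $s/(1+t) \in M_x$, which gives $M_y \subseteq (1+t) M_x$. No restriction $t<1$ is needed here.

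\textbf{Step 2 (bounding $t$).} Pick $z \in M_K(x,1/5)\cap M_K(y,1/5)$. Then $z = x + u/5 = y + v/5$ with $u\in M_x$ and $v \in M_y \subseteq (1+t)M_x$. Consequently $y - x = (u - v)/5$, and the triangle inequality for the symmetric gauge gives
\[
t ~\le~ \tfrac15 + \tfrac{1+t}{5}.
\]
Hence $\tfrac45 t \le \tfrac25$, that is, $t \le \tfrac12$.

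\textbf{Step 3 (conclusion).} Let $w \in M_K(y,1/5)$, so $w = y + s/5$ with $s \in M_y \subseteq \tfrac32 M_x$. Then
\[
\|w - x\|_{M_x} ~\le~ t + \tfrac15\cdot\tfrac32 ~\le~ \tfrac12 + \tfrac{3}{10} ~=~ \tfrac45 ~\le~ 1.
\]
So $w \in x + M_x = M_K(x,1)$, which proves the lemma with some room to spare (the bound $4/5$ is the classical $\lambda(3+\lambda)/(1-\lambda)$ at $\lambda = 1/5$).

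The only step with real content is Step 1. Its key point is choosing the reflected point $x - a/t$ on the far side of $x$ so that the displacement $a$ cancels in the convex combination. I would check it carefully in the degenerate situations, for instance when $t$ is large, where the argument still goes through. The remaining steps are bookkeeping with the symmetric gauge $\|\cdot\|_{M_x}$.
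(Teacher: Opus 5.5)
\textbf{Overall.} The paper does not prove this lemma. It quotes it as the standard overlap--containment property of shrunken Macbeath regions \cite{ELR70, BCP93}. Your argument is a correct, self-contained proof of the classical kind once one slip in Step~1 is fixed. It even yields the sharper inclusion $M_K(y,1/5)\subseteq M_K(x,4/5)$. Restricting to $x,y\in\interior(K)$ is appropriate: the paper defines Macbeath regions only at interior points and applies the lemma only there.

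\textbf{The slip in Step~1.} Pairing $y-s$ with $x+a/t$, as you propose, does not work:
$\tfrac{1}{1+t}(y-s)+\tfrac{t}{1+t}\bigl(x+\tfrac{a}{t}\bigr)=x+\tfrac{2a-s}{1+t}$.
The $a$-terms add instead of cancelling. The displacement $a$ of $y$ from $x$ does not depend on the sign of $s$, so you must reuse the same reflected point $x-a/t$:
$\tfrac{1}{1+t}(y-s)+\tfrac{t}{1+t}\bigl(x-\tfrac{a}{t}\bigr)=x-\tfrac{s}{1+t}$.
More simply, apply your first computation to $-s\in M_y$, using central symmetry.

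With this correction, Step~1 holds for every finite $t$, and $t<\infty$ because $x$ is interior. So the bootstrap in Step~2, which bounds $t$ using an inclusion depending on $t$, is legitimate, and Step~3 follows.

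\textbf{What your route buys.} Compared with the citation, your argument gives an explicit constant. Its intermediate inclusion $M_y\subseteq(1+t)M_x$ is also more useful than the lemma itself for how the paper applies it. In Lemma~\ref{lem:local-finsler-mink} one has $v-u\in\eta A(u)$ and, symmetrically, $u-v\in\eta A(v)$. Your Step~1 then gives $A(v)\subseteq(1+\eta)A(u)$ and $A(u)\subseteq(1+\eta)A(v)$ directly. This improves the paper's $\kappa=(1+\eta)/\eta$ to $1+\eta$, without invoking the overlap lemma at all.
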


\begin{lemma}[Macbeath--Hilbert sandwich]\label{lem:mac-hilbert}
There exist constants $\sigma, \tau > 0$ (independent of dimension), with $\sigma < 1 \leq \tau$, such that for any convex body $K$, any $x \in \interior(K)$, and any $0 \leq r \leq \RLocal$, 
\[
    M_K(x, \sigma r)
        ~ \subseteq ~ \ballHilb[K](x,r) 
        ~ \subseteq ~ M_K(x, \tau r). 
\]
\end{lemma}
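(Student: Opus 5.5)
The plan is to reduce both inclusions to a one-dimensional computation along lines through $x$. Both $\ballHilb[K](x,r)$ and $M_K(x,\alpha)$ are convex sets containing $x$. So it suffices to show that for every line $\ell$ through $x$ the two sections satisfy the claimed inclusions.

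Fix a unit vector $u$. Let the ray $x+tu$, $t>0$, leave $K$ at parameter $a>0$, and let the ray $x-tu$ leave $K$ at parameter $b>0$. By symmetry of the argument assume $a\le b$.

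First, the Macbeath section. Along $\ell$, $(K-x)\cap(x-K)$ corresponds to $t\in[-b,a]\cap[-a,b]=[-a,a]$. Hence
\[
  M_K(x,\alpha)\cap\ell ~=~ \{x+tu \ST |t|\le \alpha a\}.
\]
Second, the Hilbert section. The Hilbert distance between two points of $\ell$ depends only on the chord $K\cap\ell$, so the cross-ratio formula gives explicit expressions. For $y=x+tu$ with $0\le t<a$, writing $s=t/a$,
\[
  \distHilb[K](x,y) ~=~ \tfrac12\ln\Big(\tfrac{b+t}{b}\cdot\tfrac{a}{a-t}\Big).
\]
For $y=x-tu$ with $0\le t<b$, again writing $s=t/a$,
\[
  \distHilb[K](x,y) ~=~ \tfrac12\ln\Big(\tfrac{a+t}{a}\cdot\tfrac{b}{b-t}\Big).
\]

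\emph{Inner inclusion.} Take $\sigma:=\min\{\tfrac12,\tfrac1{2\RLocal}\}$. Suppose $|t|\le\sigma r a$; then $s\le\sigma\RLocal\le\tfrac12$. Since $b\ge a$, in both directions each factor is at most $1+s$ or at most $1/(1-s)$. Therefore
\[
  \distHilb[K](x,y) ~\le~ \tfrac12\ln\tfrac{1+s}{1-s} ~\le~ \tfrac{s}{1-s} ~\le~ 2s ~\le~ 2\sigma r ~\le~ r .
\]
So $M_K(x,\sigma r)\subseteq \ballHilb[K](x,r)$.

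\emph{Outer inclusion.} Let $y$ lie in the Hilbert ball of radius $r$.
\begin{itemize}
\item In the positive direction, drop the factor $(b+t)/b\ge1$. This gives $r\ge\tfrac12\ln\frac1{1-s}\ge s/2$, hence $t\le 2ra$.
\item In the negative direction, $t$ may be as large as $b\gg a$, and only $\tfrac12\ln(1+s)\le r$ is available. This yields $s\le e^{2r}-1$. By convexity of $r\mapsto e^{2r}-1$ on $[0,\RLocal]$, we get $e^{2r}-1\le r\,(e^{2\RLocal}-1)/\RLocal$.
\end{itemize}
Thus $\tau:=\max\{2,(e^{2\RLocal}-1)/\RLocal\}$ works. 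It depends only on $\RLocal$ and not on $d$ or $K$. When $\tau r>1$ the scaled Macbeath region sticks out of $K$, but that causes no difficulty.

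The main obstacle is exactly this last step. On the long side of an asymmetric chord the Hilbert ball stretches exponentially in $r$ relative to the short half-width $a$. This is why the comparison can only hold with a constant depending on the localization radius $\RLocal$, and why the lemma is restricted to $r\le\RLocal$. The rest is bookkeeping: checking that the reduction to lines is valid (both sets are convex and contain $x$, so each is the union of its sections by lines through $x$) and that the chord parametrization correctly identifies the Macbeath section as the symmetric interval of half-width $\alpha\min\{a,b\}$.
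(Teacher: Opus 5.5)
Your proof is correct. The reduction to lines through $x$ is valid (it needs only that every point lies on some line through $x$), the chord formulas and both one-dimensional estimates check out, and your remark about exponential growth on the long side of an asymmetric chord correctly explains why $\tau$ must depend on $\RLocal$.

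The paper gives no proof of its own and simply cites \cite{AbM18, VeW21}. There the lemma rests on this same chord-wise cross-ratio computation, which yields $\ballHilb[K]\bigl(x,\tfrac{1}{2}\ln(1+\lambda)\bigr) \subseteq M_K(x,\lambda) \subseteq \ballHilb[K]\bigl(x,\tfrac{1}{2}\ln\tfrac{1+\lambda}{1-\lambda}\bigr)$ for $0 \le \lambda < 1$. Your convexity step is exactly the linearization over $0 \le r \le \RLocal$ that turns this into the constants $\sigma$ and $\tau$.
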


\subsection{Fixed-Reference Comparison and Local Growth}

Recall that in the Hilbert geometry, the Holmes--Thompson density at $y$ is determined by the dual unit ball
$\polar{\finsBallHilb[K](y)}$. For $x\in\interior(K)$, define 
\[
    A(x) ~ := ~ M_K(x,1) - x.
\]
The next lemma shows that on $B := \ballHilb[K](x,\RLocal)$, the local Hilbert--Finsler unit balls are uniformly comparable to the fixed reference body $A(x)$. Since $A(x)$ is centrally symmetric, it defines a Minkowski norm $\|\cdot\|_{A(x)}$.

\begin{lemma}\label{lem:local-finsler-mink} 
Let $K$ be a convex body in $\RE^d$, let $x \in \interior(K)$, and set $B := \ballHilb[K](x,\RLocal)$. There exists a constant $c_0 = c_0(\RLocal)>1$ such that for all $y\in B$, 
\[
    \frac{1}{c_0}A(x)
        ~ \subseteq ~ \finsBallHilb[K](y)
        ~ \subseteq ~ c_0 A(x).
\]
\end{lemma}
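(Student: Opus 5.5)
The plan is to prove the comparison in two stages: first relate the Hilbert--Finsler ball at $y$ to the Macbeath body $A(y) = M_K(y,1)-y$ at the same point, and then relate $A(y)$ to $A(x)$ using Lemma~\ref{lem:mac-hilbert} and Lemma~\ref{lem:mac-containment}.

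\emph{Stage 1: pointwise comparison.}
Write $C = K - y$. Then $\finsBallHilb[K](y)$ is the unit ball of the norm $\frac12(\|v\|_C + \|{-v}\|_C)$, while $A(y) = C \cap (-C)$ has gauge $\max(\|v\|_C, \|{-v}\|_C)$. Since
\[
\tfrac12\max(a,b) ~\le~ \tfrac12(a+b) ~\le~ \max(a,b),
\]
we get
\[
A(y) ~\subseteq~ \finsBallHilb[K](y) ~\subseteq~ 2A(y).
\]
So it suffices to show that $c_1^{-1}A(x) \subseteq A(y) \subseteq c_1 A(x)$ for all $y \in B$, with $c_1 = c_1(\RLocal)$.

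\emph{Stage 2: comparison along a chain.}
The Hilbert distance from $x$ to $y$ is at most $\RLocal$. Choose points $x = z_0, z_1, \dots, z_m = y$ on the segment $[x,y]$ with consecutive Hilbert distances at most $\delta$. By additivity on collinear triples, $m \le \lceil \RLocal/\delta \rceil$, which depends only on $\RLocal$. It is then enough to compare $A(z_i)$ with $A(z_{i+1})$ by a dimension-free constant factor $c_2$; chaining gives $c_1 = c_2^m$.

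\emph{One step.}
Take $\delta$ so small that $\tau\delta \le 1/5$. By Lemma~\ref{lem:mac-hilbert}, $z_{i+1} \in \ballHilb[K](z_i,\delta) \subseteq M_K(z_i,1/5)$, so the two $1/5$-Macbeath regions intersect. Lemma~\ref{lem:mac-containment}, applied in both orders, gives $M_K(z_{i+1},1/5) \subseteq M_K(z_i,1)$. Subtracting the centers,
\[
\tfrac15 A(z_{i+1}) + (z_{i+1}-z_i) ~\subseteq~ A(z_i).
\]
The main obstacle is removing this translation. Because $A(z_{i+1})$ is centrally symmetric, taking the Minkowski half-sum of the inclusion with its reflection through the origin yields $\frac15 A(z_{i+1}) \subseteq A(z_i)$, using that $A(z_i)$ is convex and symmetric. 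Swapping the roles of $z_i$ and $z_{i+1}$ gives the reverse inclusion, so $c_2 = 5$ works.

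\emph{Conclusion.}
Combining the two stages gives the lemma with $c_0 = 2\cdot 5^m$, where $m = O(\RLocal/\delta)$. This depends only on $\RLocal$ and on the absolute constant $\tau$, and not on $d$ or $K$.
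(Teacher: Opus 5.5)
Your proof is correct and follows the paper's argument. It uses the same pointwise comparison $A(y)\subseteq \finsBallHilb[K](y)\subseteq 2A(y)$, then the same chain along $\overline{xy}$ with step $\delta=1/(5\tau)$ via Lemmas~\ref{lem:mac-hilbert} and~\ref{lem:mac-containment}. The only difference is cosmetic: you remove the translation by averaging the inclusion with its reflection, giving a per-step factor of $5$, whereas the paper uses $u-v\in\eta A(u)$ to get a factor of $(1+\eta)/\eta=6$.
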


\begin{proof}
For $y\in\interior(K)$, recall that $A(y) := M_K(y,1) - y$. We first claim that
\begin{equation}\label{eq:Ay-ball}
    A(y)
        ~ \subseteq ~ \finsBallHilb[K](y)
        ~ \subseteq ~ 2 A(y).
\end{equation}
Indeed, for any $u\in S^{d-1}$, let $t^+, t^- > 0$ denote the Euclidean distances from $y$ to $\bd K$ in the directions $u$ and $-u$, respectively. Then the radial distance of $A(y)$ in direction $u$ is $\min\{t^+, t^-\}$, whereas the radial distance of $\finsBallHilb[K](y)$ is the harmonic mean of $t^+$ and $t^-$. Since the harmonic mean lies between $\min\{t^+, t^-\}$ and $2 \min\{t^+, t^-\}$, Eq.~\eqref{eq:Ay-ball} follows. 

Now fix $y\in B$. Set $\eta := 1/5$ and $\delta := \eta/\tau$, where $\tau$ is the constant from Lemma~\ref{lem:mac-hilbert}. Since $\tau \geq 1$ and $\RLocal \geq 1$, we have $\delta \leq 1/5 \leq \RLocal$, and hence Lemma~\ref{lem:mac-hilbert} implies
\[
    \ballHilb[K](u,\delta)
        ~ \subseteq ~ M_K(u,\eta),
        \text{~~for all $u \in \interior(K)$}.
\]

Choose points $z_0=x,\dots,z_m=y$ on the segment $\overline{x y}$ so that $\distHilb[K](z_i,z_{i+1}) \leq \delta$. Since Euclidean segments are Hilbert geodesics and $\distHilb[K](x,y) \leq \RLocal$, we may arrange that $m \leq \ceil{\RLocal / \delta}$. Set
\[
    m_0
        ~ := ~ \ceil{\frac{\RLocal}{\delta}},  
\]
so $m_0$ depends only on $\RLocal$.

For $i = 0, \dots, m-1$, set $u := z_i$ and $v := z_{i+1}$. Then $v \in \ballHilb[K](u,\delta) \subseteq M_K(u,\eta)$, while trivially $v \in M_K(v,\eta)$. Hence $M_K(u,\eta) \cap M_K(v,\eta) \neq \emptyset$, and by Lemma~\ref{lem:mac-containment} we have
\[
    M_K(v,\eta)
        ~ \subseteq ~ M_K(u,1).
\]
Translating by $-v$, we obtain
\[
    \eta \SP A(v)
        ~ \subseteq ~A(u) + (u-v).
\]
Since $v\in M_K(u,\eta)$, we have $v-u \in \eta A(u)$. Since $A(u)$ is centrally symmetric, we also have $u-v \in \eta A(u)$. Thus,
\[
    A(v)
        ~ \subseteq ~\frac{1+\eta}{\eta}\,A(u).
\]
Interchanging $u$ and $v$ gives the reverse inclusion. Thus, by setting $\kappa := (1+\eta)/\eta$ and iterating along the chain, we obtain
\[
    \kappa^{-m}A(x)
        ~ \subseteq ~ A(y)
        ~ \subseteq ~ \kappa^m A(x).
\]
Since $m \leq m_0$, this implies  
\[
    \kappa^{-m_0}A(x)
        ~ \subseteq ~ A(y)
        ~ \subseteq ~ \kappa^{m_0}A(x).  
\]
Let $b_0:=\kappa^{m_0}$, which depends only on $\RLocal$. Combining this with Eq.~\eqref{eq:Ay-ball} yields
\[
    b_0^{-1} A(x)
        ~ \subseteq ~ A(y)
        ~ \subseteq ~ \finsBallHilb[K](y)
        ~ \subseteq ~ 2 A(y)
        ~ \subseteq ~ 2 b_0 A(x).
\]
Setting $c_0 := 2 b_0$ completes the proof.
\end{proof}

Using this, we show that within the fixed ball $\ballHilb[K](x,\RLocal)$, Hilbert Holmes--Thompson volumes and areas are comparable to the Minkowski geometry induced by the Macbeath region centered at $x$.

\begin{lemma}[Local measure comparisons]\label{lem:local-ht-compare}
Given any convex body $K$ in $\RE^d$ and any $x \in \interior(K)$, let $B := \ballHilb[K](x,\RLocal)$ and recall $A(x) := M_K(x,1) - x$. Then:
\begin{enumerate}
\item[$(i)$] For any Lebesgue measurable $U \subseteq B$, $\volHilb[K](U) \CEQ \volMink[A(x)](U)$.
\item[$(ii)$] For any piecewise $C^1$ surface $S\subseteq B$, $\areaHilb[K](S) \CEQ \areaMink[A(x)](S)$.
\end{enumerate}
\end{lemma}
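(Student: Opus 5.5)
The plan is to compare densities pointwise, using Lemma~\ref{lem:local-finsler-mink}, and then integrate. Fix $y \in B$ and write $B_y := \finsBallHilb[K](y)$. Lemma~\ref{lem:local-finsler-mink} gives $c_0^{-1}A(x) \subseteq B_y \subseteq c_0 A(x)$ with $c_0 = c_0(\RLocal)$ independent of $d$. Polarity reverses inclusion and scales inversely, so
\[
    c_0^{-1}\,\polar{A(x)} ~ \subseteq ~ \polar{B_y} ~ \subseteq ~ c_0\,\polar{A(x)} .
\]
All bodies here are centrally symmetric convex bodies containing the origin in their interior, so these polars are well defined.

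For part~(i), take Lebesgue measure of these inclusions. This gives $c_0^{-d}\lambda_d(\polar{A(x)}) \leq \lambda_d(\polar{B_y}) \leq c_0^{d}\lambda_d(\polar{A(x)})$. Hence the Hilbert density $d\volHilb[K](y) = \omega_d^{-1}\lambda_d(\polar{B_y})\,d\lambda_d(y)$ is within a factor $c_0^{\pm d}$ of the constant Minkowski density $\omega_d^{-1}\lambda_d(\polar{A(x)})\,d\lambda_d(y)$. Integrating over $U \subseteq B$ yields $c_0^{-d}\volMink[A(x)](U) \leq \volHilb[K](U) \leq c_0^{d}\volMink[A(x)](U)$. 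This is exactly $\volHilb[K](U) \CEQ \volMink[A(x)](U)$ with constant $c_0$, which depends only on $\RLocal$, as the convention of Section~\ref{s:covering} allows.

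For part~(ii), work at each point $y \in S$ where $S$ has a tangent hyperplane $T_y$; this holds off a $\lambda_{d-1}$-null set. Use the projection form of the area element from Lemma~\ref{lem:slice}. Orthogonal projection onto $T_y$ preserves inclusion and commutes with scaling, so
\[
    c_0^{-1}\,\orthproj{\polar{A(x)}}{T_y} ~ \subseteq ~ \orthproj{\polar{B_y}}{T_y} ~ \subseteq ~ c_0\,\orthproj{\polar{A(x)}}{T_y}.
\]
Taking $\lambda_{d-1}$ gives the pointwise density comparison with factor $c_0^{\pm(d-1)}$. Integrating over $S$ with respect to $\lambda_{d-1}$ gives $\areaHilb[K](S) \CEQ \areaMink[A(x)](S)$.

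I expect no real obstacle, since the substantive input is Lemma~\ref{lem:local-finsler-mink}. The only points needing care are bookkeeping ones. First, the reference body $A(x)$ is fixed while $T_y$ varies with $y$, so the comparison must be done pointwise before integrating. Second, $S$ need not lie in the interior of the ball. This is harmless because the lemma is stated for all $y \in B$, and $B$ is the closed Hilbert ball, which lies in $\interior(K)$. Finally, the Holmes--Thompson area element is defined via $T_y$ only at regular points of the piecewise $C^1$ surface, and the excluded set has measure zero.
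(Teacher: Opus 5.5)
Your proposal is correct and follows the paper's proof: a pointwise comparison of Holmes--Thompson densities from Lemma~\ref{lem:local-finsler-mink}, then polarity, then integration over $U$ or $S$. The one cosmetic difference is in part~(ii). You project $\polar{B_y}$ onto $T_y$, whereas the paper intersects the inclusions with $T_y$ and takes polars within $T_y$. These give the same densities by Lemma~\ref{lem:slice}.
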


\begin{proof}
Let $c_0 = c_0(\RLocal) > 1$ denote the constant from Lemma~\ref{lem:local-finsler-mink}.
Then, for all $y\in B$,
\[
    \frac{1}{c_0} A(x)
        ~ \subseteq ~ \finsBallHilb[K](y)
        ~ \subseteq ~ c_0 A(x). 
\]
Taking polars yields
\[
    \frac{1}{c_0} \polar{A(x)}
        ~ \subseteq ~ \polar{\finsBallHilb[K](y)}
        ~ \subseteq ~ c_0 \polar{A(x)}, 
\]
and hence $\lambda_d \bigl( \polar{\finsBallHilb[K](y)} \bigr) \CEQ \lambda_d \bigl( \polar{A(x)} \bigr)$. Therefore,
\[
    \volHilb[K](U)
        ~ =    ~ \int_U \frac{1}{\omega_d} \lambda_d \bigl( \polar{\finsBallHilb[K](y)} \bigr) d\lambda_d(y)
        ~ \CEQ ~ \int_U \frac{1}{\omega_d} \lambda_d \bigl( \polar{A(x)} \bigr) d\lambda_d(y)
        ~ =    ~ \volMink[A(x)](U),
\]
which establishes~(i).

To prove~(ii), let $S\subseteq B$ be piecewise $C^1$. For $\lambda_{d-1}$-a.e\ $y \in S$, the tangent space $T_y$ is well defined. Intersecting the above inclusion with $T_y$ and taking polars in $T_y$ gives
\[
    \frac{1}{c_0}\,\polarIn[T_y]{(A(x)\cap T_y)}
        ~ \subseteq ~ \polarIn[T_y]{(\finsBallHilb[K](y)\cap T_y)}
        ~ \subseteq ~ c_0\,\polarIn[T_y]{(A(x)\cap T_y)}.
\]
Thus,
\[
    \lambda_{d-1} \bigl( \polarIn[T_y]{(\finsBallHilb[K](y)\cap T_y)} \bigr)
        ~ \CEQ ~ \lambda_{d-1} \bigl( \polarIn[T_y]{(A(x)\cap T_y)} \bigr).
\]
Integrating yields $\areaHilb[K](S) \CEQ \areaMink[A(x)](S)$, as desired.
\end{proof}

The next two lemmas present the basic Holmes--Thompson growth estimates for metric balls in the Minkowski and Hilbert settings. In the Minkowski case, the estimates hold at all scales, whereas in the Hilbert case, they are restricted to bounded radii.

We begin with the Minkowski case, which will serve as the fixed-norm input for the Hilbert estimate below. The area bound follows from Thompson~\cite[Theorems~6.5.2 and~6.5.3]{Tho96}, while the volume bound follows from the Holmes--Thompson volume-product formula~\cite[Section~3.2]{AlT04}, together with the Blaschke--Santal\'o and reverse Santal\'o inequalities~\cite{San49, BoM87, MeP90}. 

\begin{lemma} \label{lem:mink-ball-growth}
For any centrally symmetric convex body $A$ in $\RE^d$, any $r \geq 0$, and any $x \in \RE^d$:
\begin{enumerate}
\item[$(i)$] $\volMink[A](\ballMink[A](x,r)) \;\CEQ\; \omega_d\,r^d$ and
\item[$(ii)$] $\areaMink[A](\ballMink[A](x,r)) \;\CEQ\; \omega_{d-1}\,r^{d-1}$.
\end{enumerate}
\end{lemma}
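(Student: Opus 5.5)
The argument reduces to the case $x = O$, $r = 1$. Minkowski Holmes--Thompson measures are translation invariant, and $\ballMink[A](x,r) = x + rA$. Under the dilation $y \mapsto ry$, the volume scales by $r^d$ and the $(d-1)$-area by $r^{d-1}$, because the density factors $\lambda_d(\polar{A})$ and $\lambda_{d-1}(\orthproj{\polar{A}}{T_y})$ do not depend on the point. The case $r=0$ is trivial. It therefore suffices to show $\lambda_d(A)\lambda_d(\polar{A}) \CEQ \omega_d^2$ and $\areaMink[A](A) \CEQ \omega_{d-1}$.

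\emph{Volume.} By definition, $\volMink[A](A) = \lambda_d(A)\lambda_d(\polar{A})/\omega_d$. For centrally symmetric $A$, the Blaschke--Santal\'o inequality gives the upper bound $\lambda_d(A)\lambda_d(\polar{A}) \leq \omega_d^2$. The reverse Santal\'o inequality of Bourgain--Milman gives the lower bound $\lambda_d(A)\lambda_d(\polar{A}) \geq c^{-d}\omega_d^2$ with $c$ absolute; Kuperberg's constant $\pi^d/d!$ is one admissible choice, since $(\pi^d/d!)/\omega_d^2 \geq c^{-d}$ by Stirling. Dividing by $\omega_d$ yields (i).

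\emph{Area.} The Holmes--Thompson area of the unit sphere of a normed space is
\[
\areaMink[A](A) ~=~ \frac{1}{\omega_{d-1}} \int_{\bd A} \lambda_{d-1}\bigl(\orthproj{\polar{A}}{n_A(y)^\perp}\bigr)\, d\lambda_{d-1}(y).
\]
Because $\lambda_{d-1}(\orthproj{\polar{A}}{u^\perp}) = \frac12\int_{\bd \polar{A}} |\inner{n(z)}{u}|\, d\lambda_{d-1}(z)$, this integral is the mixed-volume-type quantity $d\,V(A[d-1],\Pi\polar{A})$ up to normalisation. This gives the identity $\areaMink[A](A) = \frac{d}{\omega_{d-1}}V(A[d-1],\Pi\polar{A})$.

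The cleanest route for the two-sided bound is Thompson's theorems. Thompson~\cite[Theorems~6.5.2 and 6.5.3]{Tho96} shows that the Holmes--Thompson area of the unit sphere lies between universal multiples of $\omega_{d-1}$; the upper bound is at most $2\omega_{d-1}$ by Holmes--Thompson itself. I would invoke those theorems directly.

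If a self-contained argument is preferred, the lower bound can be obtained from Lemma~\ref{lem:HT-mink-polarity}(ii) and the cone-volume formula. One first applies John's theorem to put $A$ in John position, so that $B_2^d \subseteq A \subseteq \sqrt d\, B_2^d$. The trouble is that this only gives polynomial factors, which are fine because $d^{d/2}$-type losses must be avoided but $\mathrm{poly}(d)$ factors are acceptable. More robustly, one uses the inequality $\frac{1}{d}\sum$ over cone decomposition: $\lambda_d(A) = \frac1d \int_{\bd A} h_A(n)\,d\lambda_{d-1}$, and pairs this with projections of $\polar{A}$ via $h_A(n)\,\lambda_{d-1}(\orthproj{\polar{A}}{n^\perp}) \geq \frac{1}{d}\lambda_d(\polar{A})\cdot$ (a factor from the cone over the projection). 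Concretely, $\polar{A}$ contains the double cone over $\orthproj{\polar{A}}{n^\perp}\cap$ section. This relates the integral to $\lambda_d(A)\lambda_d(\polar{A})$, and then to $\omega_d^2 \CEQ \omega_{d-1}\cdot\omega_{d-1}$ via (i).

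\textbf{Main obstacle.} This lower bound on the area is the step I expect to be hardest: proving it without losing more than $c^d$. I would rely on Thompson's cited results rather than reprove it. The upper bound follows from the monotonicity Lemma~\ref{lem:HT-area-monotone} combined with Crofton-type comparison, giving area at most $2\omega_{d-1}$ times a constant.
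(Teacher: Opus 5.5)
Your proposal follows the same route as the paper. You reduce to $x=O$, $r=1$ by translation invariance and homogeneity, obtain (i) from the Blaschke--Santal\'o and Bourgain--Milman bounds on $\lambda_d(A)\lambda_d(\polar{A})$, and obtain (ii) by citing Thompson's Theorems~6.5.2 and~6.5.3, which is exactly how the paper justifies the lemma.

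Drop the side remarks, since they are not needed and two are wrong. The claim that the unit sphere has area at most $2\omega_{d-1}$ is false as stated: for $d=2$ the Holmes--Thompson length of any unit circle is its norm-perimeter, which lies in $[6,8]$, while $2\omega_1=4$. The John-position sketch would lose factors of order $d^{d/2}$, not polynomial ones.
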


Using the fixed-reference comparison with Lemma~\ref{lem:mink-ball-growth}, we obtain the corresponding bounded-radius estimate in Hilbert geometry.

\begin{lemma} \label{lem:hilb-ball-growth}
For any convex body $K$ in $\RE^d$, any $x \in \interior(K)$, and any $0 < r \leq \RLocal$:
\begin{enumerate}
\item[$(i)$] $\volHilb[K](\ballHilb[K](x,r)) \;\CEQ\; \omega_d\,r^d$ and
\item[$(ii)$] $\areaHilb[K](\ballHilb[K](x,r)) \;\CEQ\; \omega_{d-1}\,r^{d-1}$.
\end{enumerate}
\end{lemma}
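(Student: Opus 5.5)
The plan is to reduce both estimates to Lemma~\ref{lem:mink-ball-growth} by working in the fixed Minkowski geometry of $A := A(x)$ and sandwiching the Hilbert ball between two Minkowski balls. Since $r \leq \RLocal$, we have $\ballHilb[K](x,r) \subseteq B := \ballHilb[K](x,\RLocal)$. Lemma~\ref{lem:local-ht-compare} then gives
\[
    \volHilb[K](\ballHilb[K](x,r)) ~ \CEQ ~ \volMink[A](\ballHilb[K](x,r))
    \quad\text{and}\quad
    \areaHilb[K](\ballHilb[K](x,r)) ~ \CEQ ~ \areaMink[A](\ballHilb[K](x,r)).
\]
The latter applies because Hilbert balls are convex bodies, so their boundaries are surfaces of the kind admitted in Lemma~\ref{lem:local-ht-compare}(ii), understood a.e. as in Section~\ref{s:measures}.

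Next, Lemma~\ref{lem:mac-hilbert} gives
\[
    M_K(x,\sigma r) ~ \subseteq ~ \ballHilb[K](x,r) ~ \subseteq ~ M_K(x,\tau r).
\]
Since $M_K(x,s) = x + sA = \ballMink[A](x,s)$, this reads
\[
    \ballMink[A](x,\sigma r) ~ \subseteq ~ \ballHilb[K](x,r) ~ \subseteq ~ \ballMink[A](x,\tau r).
\]
All three sets are convex bodies, so by Lemma~\ref{lem:HT-area-monotone} (monotonicity of Minkowski Holmes--Thompson volume and area under inclusion) the Minkowski volume and area of $\ballHilb[K](x,r)$ lie between those of $\ballMink[A](x,\sigma r)$ and $\ballMink[A](x,\tau r)$.

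Lemma~\ref{lem:mink-ball-growth} evaluates these outer bounds as $\omega_d(\sigma r)^d$ and $\omega_d(\tau r)^d$ for volume, and $\omega_{d-1}(\sigma r)^{d-1}$ and $\omega_{d-1}(\tau r)^{d-1}$ for area, each up to $c^d$ factors. Since $\sigma,\tau$ are absolute constants, $\sigma^d$, $\tau^d$, $\sigma^{d-1}$ and $\tau^{d-1}$ are all absorbed into $\CEQ$. Chaining these estimates yields (i) and (ii).

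The only delicate point is in (ii): we need area monotonicity, which is nontrivial, and it is supplied by the Crofton argument behind Lemma~\ref{lem:HT-area-monotone}. We also need the constants coming from Lemma~\ref{lem:local-ht-compare} to depend only on $\RLocal$ and not on $d$, which is the paper's standing convention for $\CEQ$. Beyond these two checks the argument is a direct chain of the cited lemmas.
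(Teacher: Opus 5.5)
Your proposal is correct and is essentially the paper's proof: you sandwich the Hilbert ball between $x+\sigma r A(x)$ and $x+\tau r A(x)$ via Lemma~\ref{lem:mac-hilbert}, pass to the fixed Minkowski geometry of $A(x)$ via Lemma~\ref{lem:local-ht-compare}, and conclude with monotonicity (Lemma~\ref{lem:HT-area-monotone}) and Lemma~\ref{lem:mink-ball-growth}. You apply the local comparison only to the Hilbert ball itself, which lies in $\ballHilb[K](x,\RLocal)$, and do the sandwich purely in the Minkowski geometry; this is the right way to read the paper's one-line chain, since $x+\tau r A(x)$ need not lie inside $K$.
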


\begin{proof}
Lemma~\ref{lem:mac-hilbert} implies that
\[
    x + \sigma r A(x) 
        ~ \subseteq ~ \ballHilb[K](x,r) 
        ~ \subseteq ~ x + \tau r A(x),
\]
for suitable constants $\sigma, \tau > 0$. Applying Lemma~\ref{lem:local-ht-compare}, together with the monotonicity of Holmes--Thompson volumes and areas (Lemma~\ref{lem:HT-area-monotone}), yields
\[
    \volHilb[K](\ballHilb[K](x,r)) ~ \CEQ ~ \volMink[A(x)](x + r A(x))
        \quad\text{and}\quad
    \areaHilb[K](\ballHilb[K](x,r)) ~ \CEQ ~ \areaMink[A(x)](x + r A(x)).
\]
The result now follows from Lemma~\ref{lem:mink-ball-growth}.
\end{proof}

\section{Polarity-Expansion Stability} \label{s:distortion}

In our duality arguments, we thicken a convex body by including all points within distance $\alpha$, which we call its \emph{$\alpha$-expansion}. For fixed $\alpha$, we write $G_+$ for the corresponding expansion (the precise definitions in the Minkowski and Hilbert settings are given below). The key issue is that, after passing to polars, this thickening changes the ambient geometry. In the polar setting, the geometry is induced by $\polar{G_+}$, whereas the target statements are formulated in the geometry of $\polar{G}$. The purpose of this section is to control this mismatch on the common domain $\polar{K}$. More precisely, in both the Minkowski and Hilbert settings, we prove the stability bound:
\[
    \distX[\polar{G_+}](p,x)
        ~ \leq ~ \distX[\polar{G}](p,x) + 2 \alpha,
\]
for all $p,x\in \polar{K}$. Hence, every radius-$\alpha$ ball in the geometry of $\polar{G}$ induces a radius-$3 \alpha$ ball in the geometry of $\polar{G_+}$ on $\polar{K}$. 

Given a generic distance function $\text{dist}(\cdot, \cdot)$, we define the distance to any set $G$ in the natural way as
\[
    \text{dist}(x,G)
        ~ := ~ \inf_{g \in G} \text{dist}(x,g).
\]
For $\alpha > 0$, define the \emph{Minkowski $\alpha$-expansion} of $G$ (with respect to $K$) by
\[
    \expandM[K](G,\alpha)
        ~ := ~ \{x\in \RE^d \ST \distMink[K](x,G) \leq \alpha\}.
\]
Equivalently, $\expandM[K](G, \alpha) = G + \alpha K$ (where ``$+$'' denotes Minkowski addition here). 

Next, consider the Hilbert geometry induced by a body $K$. Given a convex body $G \subset \interior(K)$ and $\alpha > 0$, we define the \emph{Hilbert $\alpha$-expansion} of $G$ in $K$, denoted $\expandH[K](G, \alpha)$, analogously, but using Hilbert distances. 

\subsection{Minkowski Distortion Bound} \label{s:distortion-mink}

Let $C$ and $D$ be centrally symmetric convex bodies in $\RE^d$, and consider the Minkowski geometry induced by $D$. We begin with a technical observation showing that, in the Minkowski setting, polarity interacts with Minkowski sums via an explicit gauge identity.

\begin{lemma} \label{lem:polar-of-sum-mink} 
For any centrally symmetric convex bodies $C$ and $D$ in $\RE^d$, any $\alpha > 0$, and any $u \in \RE^d$, 
\[
    \|u\|_{\polar{(C + \alpha D)}}
        ~ = ~ \|u\|_{\polar{C}} + \alpha\,\|u\|_{\polar{D}}.
\]
\end{lemma}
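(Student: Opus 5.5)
The plan is to reduce the identity to the standard duality between gauges and support functions, combined with additivity of support functions under Minkowski addition. For a convex body $L$ with $O \in \interior(L)$, the gauge of the polar is the support function of $L$:
\[
    \|u\|_{\polar{L}} ~ = ~ h_L(u), \qquad u \in \RE^d.
\]
We first verify this. By the definition of the polar, $u \in \lambda \polar{L}$ for $\lambda>0$ if and only if $\inner{u/\lambda}{x} \leq 1$ for all $x \in L$, that is, if and only if $h_L(u) \leq \lambda$. Taking the infimum over such $\lambda$ gives the identity. Since $O \in \interior(L)$, $h_L$ is nonnegative, and $h_L(u)=0$ only when $u=O$, so the infimum is attained and behaves correctly.

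Next we apply this with $L = C + \alpha D$, which is a centrally symmetric convex body containing the origin in its interior. Support functions are additive under Minkowski sums and positively homogeneous under scaling by $\alpha>0$:
\[
    h_{C+\alpha D}(u) ~ = ~ \sup_{c\in C,\,d'\in D}\bigl(\inner{u}{c} + \alpha\inner{u}{d'}\bigr) ~ = ~ h_C(u) + \alpha\, h_D(u).
\]
The supremum splits because the two variables range independently.

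Applying the first identity again, to $C$ and to $D$, yields
\[
    \|u\|_{\polar{(C+\alpha D)}} ~ = ~ h_C(u)+\alpha h_D(u) ~ = ~ \|u\|_{\polar{C}} + \alpha\|u\|_{\polar{D}}.
\]

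There is no substantive obstacle. The only point requiring care is the gauge/support-function identity, including the degenerate case $u=O$ and the use of $O\in\interior(L)$ to guarantee $h_L \geq 0$. Central symmetry is not actually needed beyond ensuring that the origin lies in the interior.
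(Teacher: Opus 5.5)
Your proposal is correct and is the same argument as the paper's proof: you identify the gauge of $\polar{L}$ with the support function $h_L$, and then use $h_{C+\alpha D} = h_C + \alpha h_D$. Beyond the paper, you verify the gauge/support-function identity, which the paper simply asserts, and you correctly note that the only property of the bodies actually used is $O \in \interior(L)$, not central symmetry.
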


\begin{proof}
For a centrally symmetric convex body $C$, the gauge of $\polar{C}$ equals the support function of $C$, that is, $\|u\|_{\polar{C}} = h_C(u)$. Since support functions are additive under Minkowski sums, $h_{C + \alpha D} = h_C + \alpha h_D$, and hence 
\[
    \|u\|_{\polar{(C + \alpha D)}}
        ~ = ~ h_{C + \alpha D}(u)
        ~ = ~ h_C(u) + \alpha h_D(u)
        ~ = ~ \|u\|_{\polar{C}} + \alpha\,\|u\|_{\polar{D}}.
\]
\end{proof}

Using this, we can relate the dual norms induced by $\polar{C}$ and $\polar{C_+}$ on the common domain $\polar{D}$.

\begin{lemma}[Minkowski polarity-expansion stability]\label{lem:polar-exp-mink}
Let $C$ and $D$ be centrally symmetric convex bodies in $\RE^d$, let $\alpha > 0$, and let $C_+ := \expandM[D](C,\alpha)$. Then, for all $p,x \in \polar{D}$,
\[
    \distMink[\polar{C_+}](p,x)
        ~ \leq ~ \distMink[\polar{C}](p,x) + 2 \alpha.
\]
In particular, if $\distMink[\polar{C}](p,x) \leq \alpha$, then $\distMink[\polar{C_+}](p,x) \leq 3 \alpha$. 
\end{lemma}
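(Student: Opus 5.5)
By Lemma~\ref{lem:polar-of-sum-mink}, $C_+ = C + \alpha D$. Moreover, $\distMink[\polar{C_+}](p,x) = \|x-p\|_{\polar{(C+\alpha D)}}$. The plan is therefore a direct gauge computation followed by a bound on the extra term coming from the polar of $D$.

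\textbf{Step 1 (rewrite the new distance).} Apply Lemma~\ref{lem:polar-of-sum-mink} with $u := x - p$. This gives
\[
    \distMink[\polar{C_+}](p,x)
        ~ = ~ \|x-p\|_{\polar{C}} + \alpha\,\|x-p\|_{\polar{D}}
        ~ = ~ \distMink[\polar{C}](p,x) + \alpha\,\|x-p\|_{\polar{D}}.
\]
It remains to show that $\|x-p\|_{\polar{D}} \leq 2$ whenever $p,x \in \polar{D}$.

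\textbf{Step 2 (bound the extra term).} Since $D$ is centrally symmetric, so is $\polar{D}$, and its gauge $\|\cdot\|_{\polar{D}}$ is a norm. Membership $p,x\in\polar{D}$ gives $\|p\|_{\polar{D}}\leq 1$ and $\|x\|_{\polar{D}}\leq 1$. By the triangle inequality together with symmetry, $\|x-p\|_{\polar{D}} \leq \|x\|_{\polar{D}} + \|{-p}\|_{\polar{D}} \leq 2$. Combining with Step~1 yields the stated inequality.

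\textbf{Step 3 (the consequence).} If $\distMink[\polar{C}](p,x)\leq\alpha$, the inequality immediately gives $\distMink[\polar{C_+}](p,x)\leq 3\alpha$.

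There is no real obstacle here. The only points to check are that $C+\alpha D$ is again a centrally symmetric convex body, so that Lemma~\ref{lem:polar-of-sum-mink} applies, and that symmetry of $\polar{D}$ is used to handle $-p$.
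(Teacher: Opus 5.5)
Your proof is correct and follows the paper's argument essentially verbatim: apply the gauge identity of Lemma~\ref{lem:polar-of-sum-mink} to the difference vector, then bound its $\polar{D}$-gauge by $2$ (the paper uses $\polar{D}-\polar{D} = 2\polar{D}$, which is equivalent to your triangle-inequality argument). One minor correction: the identity $C_+ = C + \alpha D$ comes from the definition of the Minkowski expansion together with the central symmetry of $D$, not from Lemma~\ref{lem:polar-of-sum-mink}.
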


\begin{proof}
Let $v := p-x$, which implies that $\distMink[\polar{C}](p,x) = \|v\|_{\polar{C}}$. Since $p,x \in \polar{D}$ and $\polar{D}$ is centrally symmetric and convex, we have $v \in \polar{D} - \polar{D} = 2\polar{D}$, and hence $\|v\|_{\polar{D}} \leq 2$. Since $\expandM[D](C, \alpha) = C + \alpha D$, Lemma~\ref{lem:polar-of-sum-mink} implies that
\[
    \distMink[\polar{C_+}](p,x)
        ~ = ~ \|v\|_{\polar{C_+}}
        ~ = ~ \|v\|_{\polar{C}} + \alpha\,\|v\|_{\polar{D}}
        ~ \leq ~ \distMink[\polar{C}](p,x) + 2 \alpha,
\]
as desired.
\end{proof}

This distortion bound implies that the size of any $\alpha$-covering in the modified geometry is roughly the same as in the original geometry.

\begin{lemma}\label{lem:polar-exp-cover-mink}
Given the same preconditions as Lemma~\ref{lem:polar-exp-mink}, for any set $U \subseteq \polar{D}$,
\[
    \NcovMink[\polar{C_+}](U, \alpha)
        ~ \CLEQ ~ \NcovMink[\polar{C}](U, \alpha).
\] 
\end{lemma}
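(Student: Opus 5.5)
Take an optimal covering of $U$ in the geometry of $\polar{C}$ and enlarge each ball by a factor of three. Let $m := \NcovMink[\polar{C}](U,\alpha)$, and choose centers $x_1,\dots,x_m \in \RE^d$ with $U \subseteq \bigcup_i (x_i + \alpha\polar{C})$. Lemma~\ref{lem:polar-exp-mink} only applies when both points lie in $\polar{D}$, and the centers $x_i$ need not. So first move the centers into $U$. Discard every translate that misses $U$. In each remaining translate pick a point $p_i \in U \cap (x_i + \alpha\polar{C})$. Since $\polar{C}$ is centrally symmetric, $x_i + \alpha\polar{C} \subseteq p_i + 2\alpha\polar{C}$. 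We now have at most $m$ translates of $2\alpha\polar{C}$, each centered at a point $p_i \in U \subseteq \polar{D}$, and together they cover $U$.

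Next, transfer each translate to the geometry of $\polar{C_+}$. Let $y \in U$ with $\distMink[\polar{C}](p_i,y) \leq 2\alpha$. Both $p_i$ and $y$ lie in $\polar{D}$, so Lemma~\ref{lem:polar-exp-mink} gives $\distMink[\polar{C_+}](p_i,y) \leq 4\alpha$. Hence
\[
    U ~ \subseteq ~ \bigcup_i \left(p_i + 4\alpha\,\polar{C_+}\right),
\]
that is, $\NcovMink[\polar{C_+}](U,4\alpha) \leq m$.

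Finally, return to radius $\alpha$ using the Minkowski covering bound, Lemma~\ref{lem:mink-ball-cover}, applied to the centrally symmetric body $\polar{C_+}$ with $r=\alpha$ and $r'=4\alpha$. This gives
\[
    \NcovMink[\polar{C_+}](U,\alpha) ~ \CLEQ ~ 4^d\,\NcovMink[\polar{C_+}](U,4\alpha) ~ \leq ~ 4^d m,
\]
and the factor $4^d$ is absorbed into the $\CLEQ$ constant.

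The only delicate step is the recentering. Without it, the stability lemma cannot be invoked, because the original centers may lie outside $\polar{D}$. The cost of recentering is a doubling of the radius, which is harmless after the final rescaling. We also use that $C_+ = C + \alpha D$ is centrally symmetric, so that $\polar{C_+}$ is a valid symmetric body for Lemma~\ref{lem:mink-ball-cover}.
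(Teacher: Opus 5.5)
Your proof is correct and follows the paper's argument essentially step for step. You recenter each ball at a point $p_i \in U$, which doubles the radius to $2\alpha$; you invoke Lemma~\ref{lem:polar-exp-mink} to get radius $4\alpha$ in the geometry of $\polar{C_+}$; and you apply Lemma~\ref{lem:mink-ball-cover} with $r=\alpha$ and $r'=4\alpha$, noting (as the paper leaves implicit) that $\polar{C_+}$ is centrally symmetric.
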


\begin{proof}
Let $m=\NcovMink[\polar{C}](U,\alpha)$, and fix an $\alpha$-cover $X = \{x_1, \dots, x_m\}$, so that $U \subseteq \bigcup_{i=1}^m \ballMink[\polar{C}](x_i,\alpha)$. For each $i$, choose a point $p_i \in U \cap \ballMink[\polar{C}](x_i, \alpha)$ whenever this intersection is nonempty, and discard the remaining balls. Now fix $i$ and let $x \in U \cap \ballMink[\polar{C}](x_i, \alpha)$. Since both $p_i$ and $x$ lie in $\ballMink[\polar{C}](x_i,\alpha)$, the triangle inequality gives
\[
    \distMink[\polar{C}](p_i,x) ~ \leq ~ 2 \alpha.
\]
Because $p_i,x \in U \subseteq \polar{D}$, Lemma~\ref{lem:polar-exp-mink} implies
\[
    \distMink[\polar{C_+}](p_i,x)
        ~ \leq ~ \distMink[\polar{C}](p_i, x) + 2 \alpha
        ~ \leq ~ 4 \alpha.
\]
Thus, $U \subseteq \bigcup_{i=1}^m \ballMink[\polar{C_+}](p_i, 4 \alpha)$, and hence,
\[
    \NcovMink[\polar{C_+}](U, 4 \alpha)
        ~ \leq ~ m
        ~ =    ~ \NcovMink[\polar{C}](U, \alpha).
\]
Applying Lemma~\ref{lem:mink-ball-cover} with $D = \polar{C_+}$, $r = \alpha$, and $r' = 4 \alpha$, we obtain
\[
    \NcovMink[\polar{C_+}](U, \alpha)
        ~ \CLEQ ~ \NcovMink[\polar{C_+}](U, 4 \alpha)
        ~ \leq  ~ \NcovMink[\polar{C}](U, \alpha),
\]
as desired.
\end{proof}

\bigskip

Notably, the factor $3$ of Lemma~\ref{lem:polar-exp-mink} is sharp. To see why, consider $d=1$, let $D = [-1,1]$, and for any $\alpha > 0$, let $C = [-\alpha/2, \alpha/2]$. Since $C_+ = C + \alpha D$, we have
\[
    C_+ 
        ~ = ~ \left[ -\frac{3\alpha}{2}, \frac{3\alpha}{2} \right],
    \quad
    \polar{C} 
        ~ = ~ \left[ \frac{-2}{\alpha}, \frac{2}{\alpha} \right]
    \quad\text{and}\quad
    \polar{C_+}
        ~ = ~ \left[ \frac{-2}{3 \alpha}, \frac{2}{3 \alpha} \right].
\]
Setting $p=1$ and $x=-1$, we have
\[
    \distMink[\polar{C}](p,x)
        ~ = ~ \|2\|_{\polar{C}}
        ~ = ~ \alpha
    \quad\text{and}\quad
    \distMink[\polar{C_+}](p,x)
        ~ = ~ \|2\|_{\polar{C_+}}
        ~ = ~ 3 \alpha.
\]
Thus, the distortion $3$ is attained already in dimension $1$.

\subsection{Hilbert Distortion Bound} \label{s:distortion-hilb}

A basic complication in the Hilbert setting is that Hilbert expansions do not commute with polarity in the sense that if $G_+$ denotes the Hilbert $\alpha$-expansion of $G$ in $K$, the polar body $\polar{G_+}$ need not be a controlled Hilbert thickening of $\polar{G}$. The following lemma provides a robust substitute that is uniform over all $p, x \in \polar{K}$. 

\begin{lemma} \label{lem:polar-exp-hilb}
Let $G$ and $K$ be convex bodies in $\RE^d$ with $O \in \interior(G)$ and $G \subset \interior(K)$. Let $\alpha > 0$, and let $G_+ := \expandH[K](G,\alpha)$. Then, for all $p,x \in \polar{K}$,
\[
    \distHilb[\polar{G_+}](p,x)
        ~ \leq ~ \distHilb[\polar{G}](p,x) + 2\alpha.
\]
In particular, if $\distHilb[\polar{G}](p,x) \leq \alpha$, then $\distHilb[\polar{G_+}](p,x) \leq 3\alpha$.
\end{lemma}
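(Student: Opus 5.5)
The plan is to express the Hilbert distance in $\polar{G}$ and in $\polar{G_+}$ through supporting half-spaces, which are indexed by points of $G$ and of $G_+$ respectively. The inequality then reduces to comparing positive affine functions at two Hilbert-close points of $K$. Since $G \subseteq G_+ \subset \interior(K)$ and $O \in \interior(G)$, polarity gives $\polar{K} \subset \interior(\polar{G_+}) \subseteq \interior(\polar{G})$. Hence all distances in the statement are finite for $p, x \in \polar{K}$.

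\emph{Step 1 (supporting-functional formula).} For a convex body $L$ with $O \in \interior(L)$, written as $L = \{y \ST \inner{g}{y} \leq 1 \text{ for all } g \in \polar{L}\}$, and for $x, p \in \interior(L)$, I will use
\[
    \distFunk[L](x,p) ~ = ~ \sup_{g \in \polar{L}} \ln \frac{1 - \inner{g}{x}}{1 - \inner{g}{p}}.
\]
This follows from the definition of Funk distance: the supremum is attained by a functional supporting $L$ at the exit point $p'$ of the ray from $x$ through $p$, and every other $g$ gives a smaller ratio because the affine function $1 - \inner{g}{\cdot}$ is nonnegative on the ray up to $p'$. Applying this with $L = \polar{G}$ (so $\polar{L} = G$) and with $L = \polar{G_+}$ (so $\polar{L} = G_+$) expresses both Funk distances between $x$ and $p$ as suprema of the same expression, over $G$ and over $G_+$.

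\emph{Step 2 (affine functions under Funk moves).} For $x \in \polar{K}$, the function $f_x(g) := 1 - \inner{g}{x}$ is affine, nonnegative on $K$, and positive on $\interior(K)$. Applying the formula of Step~1 to $L = K$, with $f_x$ playing the role of one admissible functional, gives
\[
    \ln \frac{f_x(g)}{f_x(g')} ~ \leq ~ \distFunk[K](g,g') \qquad \text{for all } g, g' \in \interior(K).
\]
Alternatively, this can be checked directly on the line through $g$ and $g'$, where $f_x$ is a nonnegative linear function vanishing at or beyond the boundary.

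\emph{Step 3 (the key telescoping).} Fix $g' \in G_+$. By definition of the expansion (using compactness of $G$), there is $g \in G$ with $\distHilb[K](g,g') \leq \alpha$. I then write
\[
    \ln\frac{f_x(g')}{f_p(g')}
      ~ = ~ \ln\frac{f_x(g)}{f_p(g)} + \ln\frac{f_x(g')}{f_x(g)} + \ln\frac{f_p(g)}{f_p(g')}.
\]
By Step~2, the second term is at most $\distFunk[K](g',g)$ and the third is at most $\distFunk[K](g,g')$. Their sum is therefore at most $2\distHilb[K](g,g') \leq 2\alpha$. The first term is at most $\distFunk[\polar{G}](x,p)$. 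Taking the supremum over $g' \in G_+$ gives
\[
    \distFunk[\polar{G_+}](x,p) ~ \leq ~ \distFunk[\polar{G}](x,p) + 2\alpha.
\]
The same argument with $x$ and $p$ swapped gives the analogous bound for $\distFunk[\polar{G_+}](p,x)$. Averaging the two bounds yields the Hilbert inequality with additive $2\alpha$, and the ``in particular'' clause follows immediately.

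The main obstacle is making Step~1 rigorous, in particular identifying the dual representation with the ray-exit definition of the Funk distance; Steps~2 and~3 are then short. The essential point is that the two error terms from moving $g$ to $g'$ combine into exactly the symmetric Hilbert distance rather than twice a Funk distance. Bounding each term crudely by $2\alpha$ would only give $4\alpha$.
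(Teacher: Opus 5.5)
Your proposal is correct and follows essentially the same route as the paper. The paper also uses the polar variational formula for the Funk distance (citing Papadopoulos--Troyanov instead of deriving it), chooses a nearest $g\in G$ for each $z\in G_+$, and bounds the two ratio changes by $\distFunk[K](z,g)$ and $\distFunk[K](g,z)$, which sum to $2\distHilb[K](g,z)\le 2\alpha$, before symmetrizing. Your telescoping identity is just the logarithmic form of the paper's multiplicative comparison of numerator and denominator.
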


\begin{proof}
We make use of the forward Funk distance on a convex body $C$ with $O \in \interior(C)$. Recalling that Funk distance is denoted by $\distFunk[C](\cdot, \cdot)$,  we use the standard variational formula for Funk geometry~\cite[Corollary~2.6]{PaT14a}, rewritten in polar form, that for all $u,v \in \interior(C)$
\begin{equation}\label{eq:funk}
    \distFunk[C](u,v)
        ~ = ~ \log \left( \sup_{z\in \polar{C}} \frac{1-\inner{z}{u}}{1-\inner{z}{v}} \right).
\end{equation}
Since the Hilbert distance is the arithmetic mean of the forward and reverse Funk distances, we have
\begin{equation}\label{eq:hilb-funk}
    \distHilb[C](u,v)
        ~ = ~ \frac{1}{2} \left( \distFunk[C](u,v) + \distFunk[C](v,u) \right).
\end{equation}

Since $G_+\subset \interior(K)$ and polarity reverses inclusion, we have $\polar{K} \subset \interior(\polar{G_+})$, so $\distHilb[\polar{G_+}](p,x)$ is well-defined for all $p,x \in \polar{K}$. Given $p,x \in \polar{K}$, applying Eq.~\eqref{eq:funk} with $C = \polar{G_+}$ yields 
\begin{equation}\label{eq:funk-polarGplus}
    \distFunk[\polar{G_+}](p,x)
        ~ = ~ \log \left( \sup_{z\in G_+} \frac{1-\inner{z}{p}}{1-\inner{z}{x}} \right).
\end{equation}
Let $z \in G_+$ and choose $g \in G$ minimizing $\distHilb[K](g,z)$. Then $\distHilb[K](g,z) = \distHilb[K](z,G) \leq \alpha$. Thus,
\begin{equation}\label{eq:funk-sum}
    \distFunk[K](z,g) + \distFunk[K](g,z)
        ~ =    ~ 2 \cdot \distHilb[K](g,z)
        ~ \leq ~ 2 \alpha.
\end{equation}

We next compare the numerator and denominator using the corresponding Funk terms. By definition of $\distFunk[K]$, for $p \in \polar{K}$,
\[
    \log \left( \frac{1-\inner{p}{z}}{1-\inner{p}{g}} \right)
        ~ \leq ~ \distFunk[K](z,g)
    \quad\Longrightarrow\quad
    1 - \inner{z}{p} 
        ~ \leq ~ \exp \bigl( \distFunk[K](z,g) \bigr) \bigl( 1-\inner{g}{p} \bigr),
\]
and similarly, for $x\in \polar{K}$,
\[
    \log \left( \frac{1-\inner{x}{g}}{1-\inner{x}{z}}\right)
        ~ \leq ~ \distFunk[K](g,z)
    \quad\Longrightarrow\quad
    1-\inner{z}{x} 
        ~ \geq ~ \exp\bigl( -\distFunk[K](g,z) \bigr) \bigl( 1-\inner{g}{x} \bigr).
\]
Combining these inequalities yields
\[
    \frac{1-\inner{z}{p}}{1-\inner{z}{x}}
        ~ \leq ~ \exp \bigl( \distFunk[K](z,g) + \distFunk[K](g,z) \bigr) \frac{1-\inner{g}{p}}{1-\inner{g}{x}}
        ~ \leq ~ \exp(2 \alpha) \frac{1-\inner{g}{p}}{1-\inner{g}{x}},
\]
where the last step uses Eq.~\eqref{eq:funk-sum}. Taking the supremum over $z\in G_+$ and then over $g\in G$, and using Eq.~\eqref{eq:funk-polarGplus}, we obtain 
\[
    \distFunk[\polar{G_+}](p,x)
        ~ \leq ~ 2 \alpha + \log \left( \sup_{g\in G} \frac{1-\inner{g}{p}}{1-\inner{g}{x}} \right).
\]
By Eq.~\eqref{eq:funk}, the logarithmic term equals $\distFunk[\polar{G}](p,x)$. Therefore, $\distFunk[\polar{G_+}](p,x) \leq \distFunk[\polar{G}](p,x) + 2 \alpha$. By the same argument with $p$ and $x$ interchanged, we have $\distFunk[\polar{G_+}](x,p) \leq \distFunk[\polar{G}](x,p) + 2 \alpha$. Averaging these two inequalities and applying Eq.~\eqref{eq:hilb-funk} gives 
\[
    \distHilb[\polar{G_+}](p,x)
        ~ \leq ~ \distHilb[\polar{G}](p,x) + 2 \alpha,
\]
as desired. 
\end{proof}

This distortion bound implies that, at bounded scales, the size of any $\alpha$-covering in the modified geometry is roughly the same as in the original geometry.

\begin{lemma}\label{lem:polar-exp-cover-hilb} 
Given the same preconditions as Lemma~\ref{lem:polar-exp-hilb}, and assuming $0 < \alpha \leq \RLocal/8$, for any set $U \subseteq \polar{K}$, 
\[
    \Ncov[\polar{G_+}](U,\alpha)
        ~ \CLEQ ~ \Ncov[\polar{G}](U,\alpha).
\] 
\end{lemma}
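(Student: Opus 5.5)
The plan mirrors the proof of Lemma~\ref{lem:polar-exp-cover-mink}, replacing the Minkowski distortion bound with Lemma~\ref{lem:polar-exp-hilb} and the translative radius-change estimate with Lemma~\ref{lem:hilb-ball-cover}.

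First, set $m := \Ncov[\polar{G}](U,\alpha)$ and fix centers $x_1,\dots,x_m \in \interior(\polar{G})$ with $U \subseteq \bigcup_i \ballHilb[\polar{G}](x_i,\alpha)$. Discard every ball that misses $U$. For each remaining index $i$, choose a representative $p_i \in U \cap \ballHilb[\polar{G}](x_i,\alpha)$. This re-centering is essential, because Lemma~\ref{lem:polar-exp-hilb} applies only to pairs of points in $\polar{K}$, and the original centers $x_i$ need not lie there. The chosen points $p_i$ do lie in $U \subseteq \polar{K} \subset \interior(\polar{G_+})$, so they are legitimate centers in the geometry of $\polar{G_+}$.

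Second, take any $x \in U \cap \ballHilb[\polar{G}](x_i,\alpha)$. The triangle inequality for the Hilbert metric gives $\distHilb[\polar{G}](p_i,x) \leq 2\alpha$. Since both $p_i$ and $x$ lie in $\polar{K}$, Lemma~\ref{lem:polar-exp-hilb} then yields
\[
    \distHilb[\polar{G_+}](p_i,x) ~ \leq ~ 2\alpha + 2\alpha ~ = ~ 4\alpha.
\]
Hence $U \subseteq \bigcup_i \ballHilb[\polar{G_+}](p_i,4\alpha)$, and therefore $\Ncov[\polar{G_+}](U,4\alpha) \leq m$.

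Third, return to radius $\alpha$ by applying Lemma~\ref{lem:hilb-ball-cover} in the body $\polar{G_+}$ with $r=\alpha$ and $r'=4\alpha$. That lemma requires $r' \leq \RLocal/2$, which is exactly what the hypothesis $\alpha \leq \RLocal/8$ guarantees. It gives
\[
    \Ncov[\polar{G_+}](U,\alpha) ~ \CLEQ ~ 4^d\,\Ncov[\polar{G_+}](U,4\alpha) ~ \leq ~ 4^d\, m,
\]
and the factor $4^d$ is absorbed into $\CLEQ$. The implicit constant depends only on $\RLocal$, as the conventions allow.

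The argument has no real obstacle. The only points needing care are the re-centering step, which keeps every distance comparison inside $\polar{K}$, and checking that the final radius $4\alpha$ stays within the localization range of Lemma~\ref{lem:hilb-ball-cover}.
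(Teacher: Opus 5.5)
Your proposal is correct and follows the paper's proof: re-center each ball at a point of $U\subseteq\polar{K}$, use the triangle inequality and Lemma~\ref{lem:polar-exp-hilb} to get a $4\alpha$-cover in the geometry of $\polar{G_+}$, then apply Lemma~\ref{lem:hilb-ball-cover} with $r=\alpha$ and $r'=4\alpha$. That last step is valid exactly because $\alpha\leq\RLocal/8$.
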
 

\begin{proof} 
The proof is the same as that of Lemma~\ref{lem:polar-exp-cover-mink}, with Hilbert balls and Lemma~\ref{lem:polar-exp-hilb} in place of their Minkowski counterparts. The only additional point is that the final application of Lemma~\ref{lem:hilb-ball-cover}, with $r = \alpha$ and $r' = 4 \alpha$, is valid under the assumption $\alpha \leq \RLocal/8$. 
\end{proof} 

\bigskip

As in the Minkowski case, the constant $3$ in Lemma~\ref{lem:polar-exp-hilb} is sharp, even when $G$ and $K$ are centrally symmetric and already in dimension $1$. To see this, let $K = [-1,1]$, let $G = [-a,a]$ with $a = \tanh(\alpha/2)$, and let $p = 1$, $x = -1$. It is easily verified that $\distHilb[\polar{G}](p,x) = \alpha$. The Hilbert $\alpha$-expansion of $G$ in $K$ is $G_+ = [-\tanh(3\alpha/2),\tanh(3\alpha/2)]$. Again, it is easily verified that $\distHilb[\polar{G_+}](p,x) = 3 \alpha$. 

\section{Expansion Properties for Volumetric Coverings} \label{s:prop-exp-vol}

In our duality arguments, we repeatedly pass from a body to its $\alpha$-expansion. In this section, we present basic properties of these expansions, which will be used later in our analysis of volumetric coverings.

We begin with the Minkowski setting. Given convex bodies $C$ and $D$ in $\RE^d$, with $D$ centrally symmetric, and $\alpha > 0$, recall that the Minkowski $\alpha$-expansion of $C$ is $\expandM[D](C,\alpha) = C + \alpha D$. The following lemma is a straightforward consequence of properties of Minkowski addition.

\begin{lemma}\label{lem:mink-exp-i}
Let $C$ and $D$ be convex bodies in $\RE^d$ where $D$ is centrally symmetric. Let $\alpha > 0$ and let $C_+ := \expandM[D](C,\alpha)$. Then $C_+$ is convex and $C \subset \interior(C_+)$.
\end{lemma}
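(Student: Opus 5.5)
The plan is to argue directly from $C_+ = C + \alpha D$ and prove the two claims, convexity and $C \subset \interior(C_+)$, separately.

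\emph{Convexity.} The Minkowski sum of two convex sets is convex. Given $c_1 + \alpha d_1$ and $c_2 + \alpha d_2$ in $C_+$ and $t \in [0,1]$, the convex combination regroups as
\[
    \bigl(t c_1 + (1-t) c_2\bigr) + \alpha\bigl(t d_1 + (1-t) d_2\bigr),
\]
and this lies in $C + \alpha D$ because $C$ and $D$ are convex. Compactness of $C_+$ also holds, since it is the image of the compact set $C \times D$ under the continuous map $(c,d) \mapsto c + \alpha d$. Hence $C_+$ is a convex body.

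\emph{Interior containment.} Since $D$ is a centrally symmetric convex body, it has nonempty interior, and it contains the origin: taking $y$ and $-y$ in $D$ gives $O = \frac{1}{2}y + \frac{1}{2}(-y) \in D$ by convexity. The main point is that $O$ lies in the interior of $D$. To see this, pick any $y_0 \in \interior(D)$. Then $-y_0 \in \interior(D)$ by symmetry. The midpoint of two interior points of a convex set is interior, because the average of a small ball around $y_0$ and a small ball around $-y_0$ is a ball around $O$ contained in $D$. So there is $\rho > 0$ with $\rho B_2^d \subseteq D$.

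For any $c \in C$ this gives
\[
    c + \alpha \rho B_2^d ~ \subseteq ~ C + \alpha D ~ = ~ C_+,
\]
so $c \in \interior(C_+)$. Therefore $C \subseteq \interior(C_+)$.

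The only mildly delicate step is justifying $O \in \interior(D)$ from central symmetry, and the midpoint argument above settles it. Strictness of the inclusion is immediate, because $C_+$ is compact and $\interior(C_+)$ is not, so $C \subseteq \interior(C_+) \subsetneq C_+$.
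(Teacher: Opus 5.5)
Your proof is correct and is the routine Minkowski-sum argument the paper has in mind; the paper gives no proof, only the remark that the lemma is a straightforward consequence of properties of Minkowski addition. Your one substantive step, obtaining $\rho B_2^d \subseteq D$ from central symmetry, is handled properly.

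The closing remark on strictness is unnecessary, since the paper uses $\subset$ for ordinary inclusion. As written, it also proves $\interior(C_+) \subsetneq C_+$ rather than $C \subsetneq \interior(C_+)$. If you wanted the latter, it follows because $C$ is closed while the nonempty bounded open set $\interior(C_+)$ is not.
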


Our next lemma relates the volumetric Minkowski covering number of $C$ to the Holmes--Thompson volume of $C$ and its expansion.

\begin{lemma}\label{lem:mink-exp-ii}
Let $C$, $D$, and $\alpha$ be as in Lemma~\ref{lem:mink-exp-i}. Then
\[
    \frac{\volMink[D](C)}{\omega_d \, \alpha^d} ~ \CLEQ ~ \NcovMink[D](C,\alpha) ~ \CLEQ ~ \frac{\volMink[D](C_+)}{\omega_d \, \alpha^d}.
\]
\end{lemma}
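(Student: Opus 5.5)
The plan is to use the standard packing-versus-covering argument. Both bounds reduce to volume comparisons in the fixed Minkowski geometry of $D$. There, by definition, $\volMink[D](U) = \frac{\lambda_d(\polar{D})}{\omega_d}\lambda_d(U)$, and by Lemma~\ref{lem:mink-ball-growth}(i) every ball $x + rD$ has Holmes--Thompson volume $\CEQ \omega_d r^d$.

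\emph{Lower bound.} Let $m = \NcovMink[D](C,\alpha)$ and fix a cover $C \subseteq \bigcup_{i=1}^m (x_i + \alpha D)$. Subadditivity and monotonicity of $\volMink[D]$ (which is just a constant multiple of Lebesgue measure) give
\[
    \volMink[D](C) ~ \leq ~ \sum_{i=1}^m \volMink[D](x_i + \alpha D) ~ \CLEQ ~ m\,\omega_d\,\alpha^d .
\]
Here the last step is Lemma~\ref{lem:mink-ball-growth}(i) with $r = \alpha$. Rearranging yields the left inequality.

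\emph{Upper bound.} Let $X = \{x_1,\dots,x_m\} \subseteq C$ be a maximal subset whose pairwise distances satisfy $\distMink[D](x_i,x_j) > \alpha$. We need $X$ to be finite; this holds because $C$ is compact and $D$ has nonempty interior. By maximality, every $y \in C$ satisfies $\|y - x_i\|_D \leq \alpha$ for some $i$, so the balls $x_i + \alpha D$ cover $C$ and $\NcovMink[D](C,\alpha) \leq m$.

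Next, the balls $x_i + \frac{\alpha}{2} D$ are pairwise disjoint. If $y$ lay in two of them, then $\|x_i - x_j\|_D \leq \|x_i - y\|_D + \|y - x_j\|_D \leq \alpha$; this uses central symmetry of $D$, so that $\|\cdot\|_D$ is a norm. That would contradict separation. Each such ball also lies in $C + \frac{\alpha}{2}D \subseteq C + \alpha D = C_+$, since $O \in D$ and $D$ is convex. Using disjointness, monotonicity, and Lemma~\ref{lem:mink-ball-growth}(i) with $r = \alpha/2$,
\[
    m\,\omega_d \left(\tfrac{\alpha}{2}\right)^d ~ \CLEQ ~ \sum_{i=1}^m \volMink[D]\!\left(x_i + \tfrac{\alpha}{2}D\right) ~ \leq ~ \volMink[D](C_+).
\]
The factor $2^d$ is absorbed into the $\CLEQ$ notation, which gives the right inequality.

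There is no real obstacle. The only points needing care are that the implied constants from Lemma~\ref{lem:mink-ball-growth}(i) are of the form $c^d$ with $c$ absolute, and that central symmetry of $D$ is what makes the half-radius packing argument work. Lemma~\ref{lem:mink-exp-i} is not needed beyond the inclusion $C + \frac{\alpha}{2}D \subseteq C_+$.
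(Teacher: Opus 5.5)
Your proof is correct and follows the same route as the paper. A maximal $\alpha$-separated subset of $C$ gives both the $\alpha$-cover of $C$ and a packing of $\frac{\alpha}{2}$-balls inside $C_+$, and Lemma~\ref{lem:mink-ball-growth}(i) turns these counts into volume bounds; for the lower bound, both arguments use subadditivity over a minimum cover. Your use of strict separation to get genuinely disjoint balls, where the paper has disjoint interiors, is only a cosmetic difference.
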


\begin{proof}
Let $X \subset C$ be maximal $\alpha$-separated in $\distMink[D]$. Then $\{\ballMink[D](x,\alpha)\}_{x\in X}$ covers $C$, while $\{\ballMink[D](x, \frac{\alpha}{2})\}_{x \in X}$ have pairwise disjoint interiors and are contained in $C + \alpha D = C_+$. Hence, by Lemma~\ref{lem:mink-ball-growth}(i), 
\[
    \NcovMink[D](C,\alpha)
        ~ \leq ~ |X|
    \quad\text{and}\quad
    |X|\,\omega_d\, \left( \frac{\alpha}{2} \right)^d
        ~ \CLEQ ~ \sum_{x\in X} \volMink[D] \left( \ballMink[D] \left( x,\frac{\alpha}{2} \right) \right)
        ~ \leq  ~ \volMink[D](C_+),
\]
and hence
\[
    \NcovMink[D](C,\alpha)
        ~ \CLEQ ~ \frac{\volMink[D](C_+)}{\omega_d\,\alpha^d},
\]
establishing the second inequality.

Conversely, given any discrete set $X$ such that $\{\ballMink[D](x,\alpha)\}_{x \in X}$ covers $C$, then by subadditivity and Lemma~\ref{lem:mink-ball-growth}(i), 
\[
    \volMink[D](C)
        ~ \leq ~ \sum_{x \in X} \volMink[D]\bigl(\ballMink[D](x,\alpha)\bigr)
        ~ \CLEQ ~ |X| \, \omega_d \, \alpha^d.
\]
Taking the infimum over all such covers yields
\[
    \frac{\volMink[D](C)}{\omega_d \, \alpha^d}
        ~ \CLEQ ~ \NcovMink[D](C,\alpha), 
\]
which establishes the first inequality.
\end{proof}

\bigskip

Next, we turn to Hilbert expansions. Recall that for convex bodies $G$ and $K$ in $\RE^d$ such that $G \subset \interior(K)$ and $\alpha > 0$, the Hilbert $\alpha$-expansion of $G$ in $K$ is defined by 
\[
    \expandH[K](G,\alpha)
        ~ := ~ \{x \in \interior(K) \ST \distHilb[K](x,G) \leq \alpha\}. 
\]

The first lemma states a basic geometric property of this expansion.

\begin{lemma}\label{lem:hilb-exp-i} 
Let $G$ and $K$ be convex bodies in $\RE^d$ with $G \subset \interior(K)$, and let $0 < \alpha \leq 1$. Let $G_+ := \expandH[K](G,\alpha)$. Then $G_+$ is a convex body and $G \subset \interior(G_+) \subset \interior(K)$.
\end{lemma}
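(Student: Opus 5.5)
We need to show three things: that $G_+$ is convex, that it is compact with nonempty interior and lies in $\interior(K)$, and that $G \subset \interior(G_+)$.

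\emph{Convexity.} We show that the function $f(x) := \distHilb[K](x,G)$ is quasiconvex on $\interior(K)$, so that its sublevel set $G_+$ is convex. Take $x_0, x_1 \in G_+$ and $x_t = (1-t)x_0 + t x_1$. By compactness of $G$ and continuity of the Hilbert metric, there are $g_0, g_1 \in G$ with $\distHilb[K](x_i,g_i) \le \alpha$. Set $g_t := (1-t)g_0 + t g_1 \in G$. It suffices to show
\[
\distHilb[K](x_t,g_t) \le \max\{\distHilb[K](x_0,g_0),\distHilb[K](x_1,g_1)\}.
\]
This is exactly the statement that Hilbert balls are ``jointly convex'' along pairs of segments, a standard fact about Hilbert geometry. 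We would prove it through the Funk distance, using Eq.~\eqref{eq:funk} in the form
\[
\distFunk[K](u,v) = \log \sup_{z\in\polar{K}} \frac{1-\inner{z}{u}}{1-\inner{z}{v}}
\]
(assume $O \in \interior(K)$ after translating).

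\emph{Main obstacle.} The difficulty is that a Funk bound $1-\inner{z}{u_i} \le e^{a}\,(1-\inner{z}{v_i})$ is affine in $(u_i,v_i)$. Averaging this inequality over $i=0,1$ gives the forward Funk bound for $(u_t,v_t)$ with $a = \max(a_0,a_1)$. The Hilbert distance, however, is the average of the forward and reverse Funk distances, and the maximum of averages is not controlled by averaging. Our first attempt would use the alternative characterization
\[
\distHilb[K](u,v) = \tfrac12 \log \sup_{z,w\in\polar{K}} \frac{(1-\inner{z}{u})(1-\inner{w}{v})}{(1-\inner{z}{v})(1-\inner{w}{u})}.
\]
Pairing the two Funk inequalities per endpoint then reduces the claim to the inequality
\[
\sqrt{ab} \ge \tfrac12\left(\sqrt{a_0b_0} + \sqrt{a_1b_1}\right),
\]
i.e. concavity of the geometric mean applied to the affine functions $a, b$. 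Here $a = 1-\inner{z}{u}$ and $b = 1-\inner{w}{v}$ are evaluated along the segments. If this route fails, we fall back on the classical result that Hilbert distance is convex along pairs of geodesics when $K$ is strictly convex, and obtain the general case by approximating $K$ from inside. Alternatively, we could cite directly the known fact that metric neighborhoods of convex sets in Hilbert geometry are convex, cf.~\cite{Bus55}.

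\emph{Compactness and $G_+ \subset \interior(K)$.} Since $\alpha \le 1$ and $G$ is compact in $\interior(K)$, we have $G_+ \subseteq \bigcup_{g\in G}\ballHilb[K](g,\alpha)$, and this set is compact. To see this, note first that $f$ is continuous, so $G_+$ is closed in $\interior(K)$. Moreover, Hilbert distance tends to $\infty$ as a point approaches $\bd K$ while the other point stays in the compact set $G$. Hence $G_+$ stays a positive Euclidean distance away from $\bd K$, so it is compact and contained in $\interior(K)$.

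\emph{$G \subset \interior(G_+)$.} For $g\in G$, the ball $\ballHilb[K](g,\alpha) \subseteq G_+$ contains $M_K(g,\sigma\alpha)$ by Lemma~\ref{lem:mac-hilbert}. This Macbeath region is a neighborhood of $g$, so $g \in \interior(G_+)$. In particular $G_+$ has nonempty interior, which completes the proof that $G_+$ is a convex body with $G \subset \interior(G_+) \subset \interior(K)$.
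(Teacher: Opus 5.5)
\textbf{The convexity step has a genuine gap.} Your arguments for compactness, for $G_+\subset\interior(K)$, and for $G\subset\interior(G_+)$ are fine. The problem is the inequality you reduce convexity to,
\[
    \distHilb[K](x_t,g_t) ~\leq~ \max\{\distHilb[K](x_0,g_0),\,\distHilb[K](x_1,g_1)\},
\]
with the same affine parameter $t$ on both segments. This is not a standard fact of Hilbert geometry; it is false.

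Let $K$ be the triangle $\{x\in\RE^3 : x_i\geq 0,\ x_1+x_2+x_3=1\}$. There $\distHilb[K](p,q)=\frac12\log\bigl(\max_i \frac{p_i}{q_i}\cdot\max_j \frac{q_j}{p_j}\bigr)$. In units of $10^{-4}$, take
$g_0=(100,1,9899)$, $x_0=(400,1,9599)$, $g_1=(1,100,9899)$, $x_1=(1,25,9974)$.
Then $\distHilb[K](x_0,g_0)=\frac12\log 4.125\approx 0.71$ and $\distHilb[K](x_1,g_1)\approx\frac12\log 4.03\approx 0.70$. The midpoints are $g_{1/2}=(50.5,50.5,9899)$ and $x_{1/2}=(200.5,13,9786.5)$, with ratios $x_i/g_i\approx(3.97,\,0.257,\,0.989)$. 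Hence $\distHilb[K](x_{1/2},g_{1/2})\approx\frac12\log 15.4\approx 1.37>1$, so restricting to $\alpha\leq 1$ does not help.

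The mechanism is that $x_i(t)/g_i(t)$ is a weighted average of its endpoint values, with weights proportional to $(1-t)g_i(0)$ and $t\,g_i(1)$. At $t=\frac12$, coordinate~1 still has essentially its $t=0$ ratio $4$, while coordinate~2 already has essentially its $t=1$ ratio $\frac14$. So the forward Funk half is large at $t=0$, the backward half is large at $t=1$, and both are large at $t=\frac12$.

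This is also why your geometric-mean reduction cannot close. Concavity of $\sqrt{ab}$ along affine paths only gives lower bounds at $t$. That is the right direction for the denominator $(1-\inner{z}{v_t})(1-\inner{w}{u_t})$, but the wrong one for the numerator $(1-\inner{z}{u_t})(1-\inner{w}{v_t})$, where you need an upper bound.

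Your first fallback fails as well. Convexity of the Hilbert distance along pairs of geodesics (Busemann nonpositive curvature) holds only when $K$ is an ellipsoid, by a theorem of Kelly and Straus. Strict convexity does not rescue even the weaker max-inequality with affine parameters. Hilbert distances of strictly convex bodies approximating the triangle converge to the triangle's distances at these fixed interior points, so the strict violation above persists.

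Convexity of Hilbert $\alpha$-neighborhoods needs an argument adapted to the projective structure, not a pairing of points by a common affine parameter. The paper does not reprove it. It cites Busemann's result that closed metric neighborhoods of convex sets in a Hilbert geometry are convex \cite[Prop.~18.9]{Bus55}, which is your last fallback. That citation is the only one of your three routes that works, and combined with your remaining steps it proves the lemma.
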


\begin{proof}
This is a standard property of Hilbert geometries (see, e.g., Busemann~\cite[Prop.~18.9]{Bus55}). Also, $G \subset \interior(G_+)$ since $\alpha > 0$. 
\end{proof}

Our next lemma relates the volumetric Hilbert covering number of $G$ to the Holmes--Thompson volume of $G$ and its expansion. 

\begin{lemma}\label{lem:hilb-exp-ii} 
Let $G$, $K$, and $\alpha$ be as in Lemma~\ref{lem:hilb-exp-i}. Then 
\[
    \frac{\volHilb[K](G)}{\omega_d \, \alpha^d}
        ~ \CLEQ ~ \Ncov[K](G,\alpha)
        ~ \CLEQ ~ \frac{\volHilb[K](G_+)}{\omega_d \, \alpha^d}. 
\]
\end{lemma}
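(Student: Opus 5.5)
The plan is to mirror the proof of Lemma~\ref{lem:mink-exp-ii}, replacing Minkowski balls by Hilbert balls. We use Lemma~\ref{lem:hilb-ball-growth} in place of Lemma~\ref{lem:mink-ball-growth}. This is legitimate because all radii involved are at most $\alpha \leq 1 \leq \RLocal$.

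\emph{Upper bound.} Let $X \subset G$ be a maximal $\alpha$-separated set with respect to $\distHilb[K]$. Such a set is finite, since $G$ is compact and, on $G$, the Hilbert metric is bounded below by a multiple of the Euclidean metric. By maximality, the balls $\ballHilb[K](x,\alpha)$ for $x \in X$ cover $G$, so $\Ncov[K](G,\alpha) \leq |X|$.

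Next, by the triangle inequality, the balls $\ballHilb[K](x,\alpha/2)$ for $x \in X$ have pairwise disjoint interiors. Every point of such a ball lies within Hilbert distance $\alpha/2 \leq \alpha$ of $x \in G$, so each ball is contained in $G_+$.

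By Lemma~\ref{lem:hilb-ball-growth}(i), applied with $r = \alpha/2 \leq \RLocal$, each small ball has Holmes--Thompson volume $\CGEQ \omega_d (\alpha/2)^d$. Ball boundaries are boundaries of convex bodies and hence null sets, so summing over $X$ gives
\[
    |X|\,\omega_d\,\alpha^d\,2^{-d} ~ \CLEQ ~ \volHilb[K](G_+).
\]
The factor $2^d$ is absorbed into $\CLEQ$.

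\emph{Lower bound.} Take any cover of $G$ by balls $\ballHilb[K](x_i,\alpha)$, $i=1,\dots,m$, with $x_i \in \interior(K)$. By subadditivity of the measure $\volHilb[K]$ and Lemma~\ref{lem:hilb-ball-growth}(i) with $r = \alpha$,
\[
    \volHilb[K](G) ~ \leq ~ \sum_{i} \volHilb[K]\bigl(\ballHilb[K](x_i,\alpha)\bigr) ~ \CLEQ ~ m\,\omega_d\,\alpha^d.
\]
Minimizing over covers yields the left inequality.

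\emph{Main obstacle.} The only real point is ensuring that the growth estimate applies. Its constants are uniform for radii up to $\RLocal$ and independent of the center $x \in \interior(K)$, and this is exactly what Lemma~\ref{lem:hilb-ball-growth} provides. So the step is essentially immediate once the hypothesis $\alpha \leq 1$ is checked.
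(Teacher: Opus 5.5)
Your proposal is correct and follows the paper's route: a maximal $\alpha$-separated subset of $G$ gives a radius-$\alpha$ cover of $G$ and a packing of radius-$\alpha/2$ balls inside $G_+$, Lemma~\ref{lem:hilb-ball-growth}(i) supplies the ball-volume estimates, and subadditivity gives the lower bound. Your remarks on the finiteness of $X$ and on ball boundaries being null sets fill in details the paper leaves implicit.
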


\begin{proof}
The proof follows by the same maximal-separated-set argument as in the proof of Lemma~\ref{lem:mink-exp-ii}: a maximal $\alpha$-separated subset of $G$ yields a cover of $G$ by radius-$\alpha$ balls and a disjoint packing by radius-$\alpha/2$ balls inside $G_+$. The only change is that we estimate the volumes of these balls using Lemma~\ref{lem:hilb-ball-growth}(i).
\end{proof}

\section{Duality for Volumetric Coverings} \label{s:volume-cover}

In this section, we provide proofs for the duality of volumetric coverings. As a warm-up exercise, we begin with the (known) duality for volumetric coverings by translates in a Minkowski space.

\begin{lemma}[Duality for volumetric coverings by translates] \label{lem:volume-cover-duality-mink}
There exists an absolute constant $c \geq 1$ such that for any pair of centrally symmetric convex bodies $C$ and $D$, and any $\alpha > 0$,
\[
    c^{-d} \, \NcovMink[\polar{C}](\polar{D}, \alpha)
        ~ \leq ~ \NcovMink[D](C, \alpha)
        ~ \leq ~ c^{d} \, \NcovMink[\polar{C}](\polar{D}, \alpha).
\]
\end{lemma}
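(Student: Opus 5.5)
By symmetry of the statement under the substitution $(C,D)\mapsto(\polar{D},\polar{C})$ (using $\polar{(\polar{C})}=C$ for centrally symmetric bodies), it suffices to prove the upper bound $\NcovMink[D](C,\alpha) \CLEQ \NcovMink[\polar{C}](\polar{D},\alpha)$. Applying this one inequality to the pair $(\polar{D},\polar{C})$ then gives the lower bound. The plan follows the expansion--duality scheme outlined in Section~\ref{s:techniques-overview}.

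First set $C_+ := \expandM[D](C,\alpha) = C+\alpha D$. This is a centrally symmetric convex body containing $C$ in its interior (Lemma~\ref{lem:mink-exp-i}). By the upper half of Lemma~\ref{lem:mink-exp-ii},
\[
    \NcovMink[D](C,\alpha) ~ \CLEQ ~ \frac{\volMink[D](C_+)}{\omega_d\,\alpha^d}.
\]
Next dualize the volume with Lemma~\ref{lem:HT-mink-polarity}(i), which gives $\volMink[D](C_+) = \volMink[\polar{C_+}](\polar{D})$. Now apply the lower half of Lemma~\ref{lem:mink-exp-ii} in the geometry of $\polar{C_+}$, covering the body $\polar{D}$:
\[
    \frac{\volMink[\polar{C_+}](\polar{D})}{\omega_d\,\alpha^d} ~ \CLEQ ~ \NcovMink[\polar{C_+}](\polar{D},\alpha).
\]
This step needs only the covering-volume bound from Lemma~\ref{lem:mink-ball-growth}(i); no expansion of $\polar{D}$ is required.

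The remaining step, which is the real content of the argument, is to return from the ambient norm of $\polar{C_+}$ to that of $\polar{C}$. Because $C\subseteq C_+$, polarity gives $\polar{C_+}\subseteq\polar{C}$, so $\polar{C_+}$-balls are smaller and the comparison is not trivial. This gap is exactly what Lemma~\ref{lem:polar-exp-cover-mink} handles. Applied with $U=\polar{D}$, it yields $\NcovMink[\polar{C_+}](\polar{D},\alpha)\CLEQ\NcovMink[\polar{C}](\polar{D},\alpha)$. That lemma rests on the additive distortion bound of Lemma~\ref{lem:polar-exp-mink}, which holds because any two points of $\polar{D}$ differ by a vector of $\polar{D}$-norm at most $2$, together with the radius-change bound of Lemma~\ref{lem:mink-ball-cover}. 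Chaining the inequalities gives
\[
    \NcovMink[D](C,\alpha) ~ \CLEQ ~ \NcovMink[\polar{C}](\polar{D},\alpha).
\]

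The main obstacle is the mismatch of ambient geometries after dualization. It is resolved by the polarity-expansion stability lemma, so what remains is bookkeeping. The finitely many $c^d$ factors come from Lemma~\ref{lem:mink-ball-growth}, from the factor $2^d$ in passing between radii $\alpha$ and $\alpha/2$, and from the radius change $\alpha\to4\alpha$. They combine into a single absolute constant $c$, and all of them hold at every scale $\alpha>0$, as the statement requires.
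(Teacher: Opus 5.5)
Your proposal is correct and follows the paper's proof essentially step for step. You expand to $C_+ = C + \alpha D$, bound $\NcovMink[D](C,\alpha)$ by the Holmes--Thompson volume of $C_+$ (Lemma~\ref{lem:mink-exp-ii}), dualize with Lemma~\ref{lem:HT-mink-polarity}(i), and apply the lower half of Lemma~\ref{lem:mink-exp-ii} in the norm of $\polar{C_+}$. You then return to $\polar{C}$ via Lemma~\ref{lem:polar-exp-cover-mink}, and obtain the reverse inequality by substituting $(\polar{D},\polar{C})$ for $(C,D)$, exactly as the paper does.
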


\begin{proof}
Fix $\alpha > 0$ and set $C_+ = \expandM[D](C,\alpha) = C + \alpha D$. By Lemma~\ref{lem:mink-exp-i}, $C_+$ is a centrally symmetric convex body and $C \subset \interior(C_+)$. By the upper-bound part of Lemma~\ref{lem:mink-exp-ii} and the Holmes--Thompson polarity identity (Lemma~\ref{lem:HT-mink-polarity}(i))
\begin{equation}\label{eq:volume-cover-duality-mink-1}
    \NcovMink[D](C, \alpha)
        ~ \CLEQ ~ \frac{\volMink[D](C_+)}{\omega_d \, \alpha^d}
        ~ =     ~ \frac{\volMink[\polar{C_+}](\polar{D})}{\omega_d \, \alpha^d}.
\end{equation}
Applying the lower-bound part of Lemma~\ref{lem:mink-exp-ii} in the Minkowski norm with unit ball $\polar{C_+}$ gives
\begin{equation}\label{eq:volume-cover-duality-mink-2}
    \frac{\volMink[\polar{C_+}](\polar{D})}{\omega_d \, \alpha^d}
        ~ \CLEQ ~ \NcovMink[\polar{C_+}](\polar{D}, \alpha).
\end{equation}
Applying Lemma~\ref{lem:polar-exp-cover-mink} with $U = \polar{D}$, we obtain
\begin{equation}\label{eq:volume-cover-duality-mink-3}
    \NcovMink[\polar{C_+}](\polar{D}, \alpha)
        ~ \CLEQ ~ \NcovMink[\polar{C}](\polar{D}, \alpha).
\end{equation} 
By chaining Eqs.~\eqref{eq:volume-cover-duality-mink-1}--\eqref{eq:volume-cover-duality-mink-3} together, we obtain
\[
    \NcovMink[D](C,\alpha)
        ~ \CLEQ ~ \NcovMink[\polar{C}](\polar{D}, \alpha),
\]
which establishes the second inequality. To establish the first inequality, apply the same argument with $(C, D)$ replaced by $(\polar{D}, \polar{C})$.
\end{proof}

\bigskip

By applying a similar approach, we extend this to the volumetric coverings in the Hilbert geometry. This proves the first part of Theorem~\ref{thm:hilbert-cover-duality}, which we restate below.

\thmHilbertCoverDuality*

\begin{proof} (Of part~(i).)
Fix $\alpha \in (0, 1]$ and set $G_+ = \expandH[K](G,\alpha)$. By Lemma~\ref{lem:hilb-exp-i}, $G_+$ is a convex body with $G \subset \interior(G_+) \subset \interior(K)$. By the upper-bound part of Lemma~\ref{lem:hilb-exp-ii} and the Holmes--Thompson polarity identity (Lemma~\ref{lem:HT-hilb-polarity}(i))
\begin{equation}\label{eq:volume-cover-duality-hilb-1}
    \Ncov[K](G, \alpha)
        ~ \CLEQ ~ \frac{\volHilb[K](G_+)}{\omega_d \, \alpha^d}
        ~ \CEQ    ~ \frac{\volHilb[\polar{G_+}](\polar{K})}{\omega_d \, \alpha^d}.
\end{equation}
Applying the lower-bound part of Lemma~\ref{lem:hilb-exp-ii} in the Hilbert geometry induced by $\polar{G_+}$, gives
\begin{equation}\label{eq:volume-cover-duality-hilb-2}
    \frac{\volHilb[\polar{G_+}](\polar{K})}{\omega_d \, \alpha^d}
        ~ \CLEQ ~ \Ncov[\polar{G_+}](\polar{K}, \alpha).
\end{equation}
Since $\alpha \leq 1$ and Section~\ref{s:covering} fixes $\RLocal \geq 8$, we have $\alpha \leq \RLocal/8$. Hence Lemma~\ref{lem:polar-exp-cover-hilb} applies with $U = \polar{K}$, yielding 
\begin{equation}\label{eq:volume-cover-duality-hilb-3}
    \Ncov[\polar{G_+}](\polar{K}, \alpha)
        ~ \CLEQ ~ \Ncov[\polar{G}](\polar{K}, \alpha).
\end{equation} 
By chaining Eqs.~\eqref{eq:volume-cover-duality-hilb-1}--\eqref{eq:volume-cover-duality-hilb-3} together, we obtain
\[
    \Ncov[K](G,\alpha)
        ~ \CLEQ ~ \Ncov[\polar{G}](\polar{K}, \alpha),
\]
which establishes the second inequality. To establish the first inequality, apply the same argument with $(G, K)$ replaced by $(\polar{K}, \polar{G})$.
\end{proof}

\section{Relative Isoperimetric Inequalities} \label{s:rel-iso}

Having established the duality results for volumetric coverings, we now turn to boundary coverings. A key analytic tool for this case is a pair of localized relative isoperimetric inequalities. The standard isoperimetric inequality can be used to provide a lower bound on the surface area of a body in terms of its volume. In contrast, for a given body $B$ and a cutting surface, a relative isoperimetric inequality provides a lower bound on the surface area of the cut in terms of the volume it bounds, both relative to the surface area and volume of $B$.

Our first result applies to the Minkowski (translative) case. It is formulated for an arbitrary Minkowski ball in the norm whose unit ball is centrally symmetric. It will be applied in our analysis of Minkowski $\alpha$-expansions for boundary coverings (see Lemma~\ref{lem:mink-exp-fat} below).

\begin{restatable}[Relative isoperimetry for Minkowski balls]{lemma}{lemMinkHTiso} \label{lem:mink-HT-iso}
Given any centrally symmetric convex body $D$ in $\RE^d$, any $z \in \RE^d$, and $r > 0$, let $B = \ballMink[D](z,r)$. Let $E$ be any convex body in $\RE^d$ such that
\[
    0
        ~ < ~ \volMink[D] \bigl( E \cap B \bigr)
        ~ < ~ \volMink[D](B).
\]
Define
\[
    \mu
        ~ := ~ \frac{\volMink[D] \bigl( E \cap B \bigr)}{\volMink[D](B)}
        \qquad\text{and}\qquad
    \beta
        ~ := ~ \frac{\areaMink[D] \bigl( \bd E \cap \interior(B) \bigr)}{\areaMink[D](B)}.
\]
Then $\beta \CGEQ \min(\mu, 1-\mu)$.
\end{restatable}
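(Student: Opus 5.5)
Since all quantities are Minkowski Holmes--Thompson measures for the fixed norm $\|\cdot\|_D$, the plan is to normalize first and then reduce to a Euclidean relative isoperimetric inequality in a well-rounded convex body. The ratios $\mu$ and $\beta$ are invariant under translation and scaling, so we may take $z = O$ and $r = 1$, giving $B = D$. Volume ratios are invariant under every linear map $T$. For area we use the projection form $d\areaMink[D](x) = \omega_{d-1}^{-1}\lambda_{d-1}(\orthproj{\polar{D}}{T_x})\,d\lambda_{d-1}(x)$. Holmes--Thompson area is intrinsic to the normed space, so applying $T$ to both $D$ and $E$ leaves every area unchanged. We therefore put $D$ in John position, $B_2^d \subseteq D \subseteq \sqrt{d}\,B_2^d$. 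Note that the factor $\sqrt d$ is too lossy only if it enters polynomially inside a $c^d$ estimate the wrong way; below we track it carefully. Better still is a position in which $D$ is comparable to $B_2^d$ up to constants in the sense of $\lambda_{d-1}$ of projections of $\polar{D}$. For that we will instead compare directly via the Minkowski norm itself.

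Direct approach: write the cut surface $S = \bd E \cap \interior(D)$ and split $D$ into $D_1 = E \cap D$ and $D_2 = D \setminus E$, with $\lambda_d(D_1) = \mu\,\lambda_d(D)$. The key step is a Minkowski-norm version of the relative isoperimetric inequality in a convex domain. I would prove it via the Crofton/Holmes--Thompson integral-geometric representation (Schneider, as in Lemma~\ref{lem:HT-area-monotone}): $\areaMink[D](S)$ is, up to normalization, the measure of lines meeting $S$, counted with multiplicity. A line meets $S$ whenever it passes through both $D_1$ and $D_2$. So it suffices to show that the Crofton measure of lines meeting both $D_1$ and $D_2$ is $\CGEQ \min(\mu,1-\mu)\,\areaMink[D](D)$. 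Here $\areaMink[D](D) \CEQ \omega_{d-1}$ by Lemma~\ref{lem:mink-ball-growth}(ii), and $\volMink[D](D)\CEQ\omega_d$ by (i).

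To lower-bound the measure of separating lines, I would use a chord-counting argument. Pick a random point $y$ uniformly in $D$ and a random direction from the Holmes--Thompson line measure. The chord of $D$ through $y$ has length of order $1$ in the norm, since $D$ is its own unit ball: every chord through a point has $\|\cdot\|_D$-length at most $2$, and typically at least a constant. Then the probability that two independent uniform points $y_1 \in D_1$, $y_2\in D_2$ can be joined by a line contributing to the measure can be related, via Blaschke--Petkantschin-type formulas, to $\mu(1-\mu)$. The $c^d$ losses come from comparing $\lambda_d(\polar{D})\lambda_d(D)$ to $\omega_d^2$ (Blaschke--Santaló and reverse Santaló) and from comparing projection volumes with $\omega_{d-1}$. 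An alternative, possibly cleaner route is to invoke the Euclidean relative isoperimetric inequality for convex domains with constant $\gtrsim \operatorname{vol}(D)/\operatorname{diam}(D)$ (Payne--Weinberger/Dyer--Frieze type) after a linear map making $\polar{D}$'s projections uniformly comparable. However, the diameter-to-inradius ratio is $\sqrt d$ in John position, which costs only a polynomial factor and is absorbed in $c^d$. Concretely: Euclidean $\lambda_{d-1}(S) \geq \frac{c}{\operatorname{diam} D}\min(\lambda_d(D_1),\lambda_d(D_2))$. Each HT density $\lambda_{d-1}(\orthproj{\polar{D}}{T_x})$ is at least $\lambda_{d-1}(\orthproj{(d^{-1/2}B_2^d)}{T_x})$, and $\areaMink[D](D) \leq$ the corresponding value for $B_2^d$ by monotonicity (Lemma~\ref{lem:HT-area-monotone}) applied to $D\subseteq\sqrt d B_2^d$. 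All the resulting factors are $d^{O(d)}$.

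That last point is the real obstacle: $d^{O(d)}$ is \emph{not} $c^d$. So the Euclidean reduction in John position loses too much, and the dimension-free constant must come from a genuinely norm-intrinsic argument. I would therefore commit to the Crofton route. Here the measure of lines separating $D_1$ from $D_2$ inside the convex set $D$ is bounded by a localization (needle decomposition) argument in the normed space. This reduces to one-dimensional log-concave needles, where the inequality holds with an absolute constant. The $c^d$ loss then enters only through the normalization comparisons via Santaló and reverse Santaló. Making the needle decomposition compatible with the Holmes--Thompson line measure is where I expect the main difficulty.
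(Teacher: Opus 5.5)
\textbf{There is a genuine gap.} The route you commit to is a lower bound on the Crofton (Holmes--Thompson line) measure of lines meeting both $E\cap D$ and $D\setminus E$, obtained via a needle decomposition. That bound is the entire content of the lemma, and none of its steps is carried out. You do not show how localization interacts with the Holmes--Thompson line measure; you say yourself this is where you expect the main difficulty. The Blaschke--Petkantschin estimate relating this measure to $\mu(1-\mu)$ is only gestured at. The claim that chords through a ``typical'' point have $\|\cdot\|_D$-length bounded below is also unproved, and chords through points near $\bd D$ can be arbitrarily short. What remains is a plan, not a proof.

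\textbf{The missing idea is in the Euclidean reduction you discarded.} The $d^{O(d)}$ loss you found comes from John position, where you can only sandwich $\polar{D}$ between $d^{-1/2}B_2^d$ and $B_2^d$. It is an artifact of that position, not of the reduction itself.

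What you need is a position in which the Holmes--Thompson density varies by at most a factor $c^d$ over all normal directions $n$. On a surface element with unit normal $n$ this density is $\omega_{d-1}^{-1}\lambda_{d-1}(\orthproj{\polar{D}}{n^\perp})$. If it is uniform up to $c^d$, then $\beta \CGEQ \lambda_{d-1}(\bd E\cap\interior(D))/\lambda_{d-1}(\bd D)$. Isotropic position, which your linear-invariance observation already permits, achieves this:
\begin{itemize}
\item by projection--section duality, the density is the $(d-1)$-volume of the polar of $D\cap n^\perp$ taken within $n^\perp$;
\item by Blaschke--Santal\'o and reverse Santal\'o in dimension $d-1$, this is within $c^d$ of $\omega_{d-1}/\lambda_{d-1}(D\cap n^\perp)$;
\item for a centrally symmetric isotropic body, all central sections $\lambda_{d-1}(D\cap n^\perp)$ agree up to an absolute constant (Hensley).
\end{itemize}

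After this, any Euclidean relative isoperimetric inequality with \emph{polynomial} dependence on $d$ suffices. For example, the Cheeger constant of the uniform measure on $D$ is $\gtrsim 1/\operatorname{diam}(D) \gtrsim 1/d$; the paper instead uses Klartag's bound $c/\sqrt{\log d}$. Combine this with $\lambda_d(D)\geq d^{-1}\lambda_{d-1}(\bd D)$, which holds because $h_D\geq 1$ in isotropic position. The resulting polynomial factors are absorbed into $c^d$.

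This is essentially the paper's proof. The paper routes the density comparison through Busemann area, which is defined by sections of $D$, and then compares Busemann with Holmes--Thompson area using the Santal\'o inequalities.
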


We next consider the Hilbert case, for balls in the small-radius regime. Here, the boundary of the cutting body is assumed to pass through the center of the ball. This is the form that will be needed in our Hilbert boundary-covering arguments (see Lemma~\ref{lem:hilb-exp-surf} below).

\begin{restatable}[Relative isoperimetry for Hilbert balls]{lemma}{lemHilbertHTiso} \label{lem:hilbert-HT-iso}
Given any convex body $K$ in $\RE^d$, any $x \in \interior(K)$, and $0 < r \leq 1$, let $B = \ballHilb[K](x,r)$. Given any convex body $E \subset \interior(K)$ with $x \in \bd E$, define
\[
    \mu
        ~ := ~ \frac{\volHilb[K] \bigl( E \cap B \bigr)}{\volHilb[K](B)}
        \qquad\text{and}\qquad
    \beta
        ~ := ~ \frac{\areaHilb[K] \bigl( \bd E \cap \interior(B) \bigr)}{\areaHilb[K](B)}.
\]
Then $\beta \CGEQ \mu$.
\end{restatable}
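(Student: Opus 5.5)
The plan is to reduce to the Minkowski relative isoperimetric inequality (Lemma~\ref{lem:mink-HT-iso}) for the reference norm $A(x) = M_K(x,1)-x$. The reduction uses the local comparison of Lemma~\ref{lem:local-ht-compare} and the Macbeath--Hilbert sandwich of Lemma~\ref{lem:mac-hilbert}. Since $r \leq 1 \leq \RLocal$, everything takes place inside $\ballHilb[K](x,\RLocal)$, so all Hilbert Holmes--Thompson volumes and areas of subsets of $B$ are $\CEQ$ to the corresponding Minkowski quantities for $A := A(x)$. In particular, $\mu$ is $\CEQ$ to the Lebesgue ratio $\lambda_d(E\cap B)/\lambda_d(B)$. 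By Lemma~\ref{lem:hilb-ball-growth}(ii), $\areaHilb[K](B) \CEQ \omega_{d-1} r^{d-1}$. If $\mu = 0$ there is nothing to prove, so assume $\mu>0$.

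\emph{Step 1: pass to a symmetric inner ball.} By Lemma~\ref{lem:mac-hilbert},
\[
    B' := x + \sigma r A ~\subseteq~ B ~\subseteq~ x + \tau r A .
\]
The set $B'$ is a Minkowski ball $\ballMink[A](x,\sigma r)$, centrally symmetric about $x$. The key point is that mass of $E\cap B$ can be transferred into $B'$ by convexity. Since $x \in \bd E \subseteq E$ and $E\cap B$ is convex and contained in $x+\tau r A$, the homothetic copy $x + \frac{\sigma}{\tau}\bigl((E\cap B)-x\bigr)$ lies in $E \cap B'$. Hence
\[
    \lambda_d(E\cap B') ~\geq~ (\sigma/\tau)^d\,\lambda_d(E\cap B).
\]
Because $\lambda_d(B') \leq \lambda_d(B)$, the ratio $\mu' := \volMink[A](E\cap B')/\volMink[A](B')$ satisfies $\mu' \CGEQ \mu$, where $\sigma,\tau$ are absolute constants.

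\emph{Step 2: the complementary fraction.} Since $x\in\bd E$ and $E$ is convex, there is a supporting closed halfspace $H$ with $x\in\bd H$ and $E\subseteq H$. Since $B'$ is symmetric about $x$, at most half of $B'$ lies in $H$, so $\mu' \leq 1/2$. Therefore $\min(\mu',1-\mu') = \mu' > 0$, and the nondegeneracy hypotheses of Lemma~\ref{lem:mink-HT-iso} hold. Applying that lemma to $B'$ gives
\[
    \areaMink[A]\bigl(\bd E\cap\interior(B')\bigr) ~\CGEQ~ \mu'\,\areaMink[A](B') ~\CEQ~ \mu'\,\omega_{d-1}(\sigma r)^{d-1},
\]
using Lemma~\ref{lem:mink-ball-growth}(ii).

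\emph{Step 3: return to Hilbert quantities.} The surface $\bd E\cap\interior(B')$ is a subset of $\bd E\cap\interior(B)$, so the latter has at least as much Hilbert area. By Lemma~\ref{lem:local-ht-compare}(ii), the former's Hilbert area is $\CEQ$ its $A$-area; convex boundaries are covered by the a.e.\ smoothness remark in Section~\ref{s:measures}. Combining this with $\areaHilb[K](B)\CEQ\omega_{d-1}r^{d-1}$ yields $\beta \CGEQ \sigma^{d-1}\mu' \CGEQ \mu$.

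The main obstacle is Step~1. A Hilbert ball is not symmetric about its center, and $E\cap B$ might a priori sit far from $x$, so the captured volume fraction must be shown to survive passage to a symmetric ball. The homothety argument, which uses that $x \in E$, handles this at a cost of only $(\sigma/\tau)^d$. The hypothesis $x\in\bd E$ is used twice: once for this homothety, and once to force $\mu'\le 1/2$, which is what turns the $\min(\mu,1-\mu)$ of the Minkowski lemma into the one-sided bound $\beta \CGEQ \mu$.
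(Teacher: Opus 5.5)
Your proof is correct and follows essentially the same route as the paper: sandwich $B$ between the Macbeath balls $x+\sigma rA$ and $x+\tau rA$, transfer the volume fraction to the inner symmetric ball, use a supporting halfspace at $x$ to force that fraction to be at most $1/2$, apply Lemma~\ref{lem:mink-HT-iso}, and return to Hilbert quantities via Lemma~\ref{lem:local-ht-compare}. Your explicit homothety about $x$, which uses $x\in E$ and convexity, is exactly what the paper's step $\mu_{A,0}\CGEQ\mu_A$ needs; the paper's written justification for that step cites the inclusions and homogeneity and leaves this use of $x\in E$ implicit.
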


The proofs of Lemma~\ref{lem:mink-HT-iso} and Lemma~\ref{lem:hilbert-HT-iso} are rather technical and have been deferred to Appendix~\ref{app:rel-iso}. They proceed by first establishing a Euclidean relative isoperimetric inequality for convex cuts of an isotropic convex body,  transferring it to the Busemann normalization on a Minkowski unit ball, and then passing it to the Holmes--Thompson setting. Lemma~\ref{lem:mink-HT-iso} then follows by translation and scaling. To prove Lemma~\ref{lem:hilbert-HT-iso}, we transfer from the Minkowski ball to the Hilbert ball using the Macbeath-region sandwich and local comparability of Hilbert and Minkowski Holmes--Thompson densities.

\section{Expansion Properties for Boundary Coverings} \label{s:prop-exp-surf}

For boundary coverings, a direct bound in terms of Holmes--Thompson surface area is generally not possible at all scales $\alpha$. The main difficulty arises in the packing arguments, since the portion of the boundary covered may be arbitrarily small compared to the surface area of the covering ball. This issue can arise, for example, with skinny, needle-like objects. The purpose of expansion is to eliminate this effect by fattening the body, without significantly increasing the covering numbers. 

To formalize this concept, we need a notion of fatness that is relative to the covering element, which we call relative fatness. Let us consider a generic setting. Fix a metric-measure structure on a domain $\Omega$ with distance $\dist(\cdot,\cdot)$ and balls $\ballX(x,r)$. Assume also that we are given a volume functional $\vol(\cdot)$ on measurable subsets of $\Omega$ and a surface-area functional $\area(\cdot)$ on piecewise $C^1$ surfaces in $\Omega$. For $\alpha > 0$ and $0 < \gamma \leq 1$, we say that a measurable set $E \subseteq \Omega$ is \emph{relatively $(\alpha,\gamma)$-fat} if for every $x \in \bd E$ and every $0 < r \leq \alpha$,
\[
    \frac{\vol(E\cap \ballX(x,r))}{\vol(\ballX(x,r))} 
        ~ \geq ~ \gamma.
\]
Likewise, for $\alpha > 0$ and $0 < \gamma \leq 1$, we say that a piecewise $C^1$ surface $S \subseteq \Omega$ is \emph{relatively $(\alpha,\gamma)$-surface-fat} if for every $x \in S$ and every $0 < r \leq \alpha$,
\[
    \frac{\area(S\cap \interior(\ballX(x,r)))}{\area(\ballX(x,r))} 
        ~ \geq ~ \gamma.
\]
When $S = \bd G$ for a convex body $G \subseteq \Omega$, we also say that $G$ is \emph{relatively $(\alpha,\gamma)$-surface-fat}. When $\gamma \CGEQ 1$, we simply say that the body is \emph{relatively $\alpha$-fat} or \emph{relatively $\alpha$-surface-fat}.

\subsection{Boundary Properties in the Minkowski Setting} \label{s:prop-mink-exp-surf}

In this section, we explore the relevant properties of expansion in the Minkowski setting. Given convex bodies $C$ and $D$ in $\RE^d$, and $\alpha > 0$, recall that the Minkowski $\alpha$-expansion of $C$ is $\expandM[D](C,\alpha) = C + \alpha D$. The first property is well known and follows directly from a supporting-hyperplane argument.

\begin{lemma}\label{lem:mink-exp-bdry}
Let $C$ and $D$ be convex bodies in $\RE^d$, where $D$ is centrally symmetric. For any $\alpha > 0$, let $C_+ := \expandM[D](C,\alpha)$. Then for every $x \in \bd C$, there exists $y \in \bd C_+$ such that $\distMink[D](x,y) = \alpha$. In particular,
\[
    \bd C
        ~ \subseteq ~ \{z \ST \distMink[D](z,\bd C_+) \leq \alpha\}.
\]
\end{lemma}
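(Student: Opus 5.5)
Given $x \in \bd C$, I will choose a supporting hyperplane of $C$ at $x$ and push $x$ outward along a direction adapted to $D$ until it reaches $\bd C_+$. Concretely, let $u \neq 0$ be an outer normal of $C$ at $x$, so $\inner{u}{x} = h_C(u)$. Pick $w \in D$ with $\inner{u}{w} = h_D(u)$; such a $w$ exists by compactness, and $w \in \bd D$. Set $y := x + \alpha w$. Then $y \in C + \alpha D = C_+$. Since $C_+$ has support function $h_{C_+} = h_C + \alpha h_D$, we get
\[
    \inner{u}{y} ~ = ~ h_C(u) + \alpha h_D(u) ~ = ~ h_{C_+}(u).
\]
So $y$ lies on a supporting hyperplane of $C_+$, and therefore $y \in \bd C_+$.

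Next I compute the distance. We have $\distMink[D](x,y) = \|y - x\|_D = \alpha\|w\|_D$. Because $D$ is a convex body containing $O$ in its interior (central symmetry together with nonempty interior gives this), every $w \in \bd D$ satisfies $\|w\|_D = 1$. Also $h_D(u) > 0$, so $w \neq 0$. This gives $\distMink[D](x,y) = \alpha$, which is the first claim.

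The containment then follows at once. For $z \in \bd C$, the point $y$ just constructed shows $\distMink[D](z, \bd C_+) \leq \distMink[D](z,y) = \alpha$. By central symmetry, the distance from $z$ to $y$ and from $y$ to $z$ coincide, so there is no orientation issue. There is no real obstacle here. The only points needing care are that $w$ is a boundary point of $D$ (it maximizes a nonzero linear functional), and that a point of $C_+$ lying on a supporting hyperplane is a boundary point.
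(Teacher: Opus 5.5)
Your proof is correct and is the supporting-hyperplane argument the paper refers to; the paper itself gives no details beyond that phrase. Taking $y = x + \alpha w$ with $w \in D$ maximizing $\inner{u}{\cdot}$ places $y$ on the supporting hyperplane of $C_+ = C + \alpha D$ with normal $u$, and $\|w\|_D = 1$ gives the distance, with the small points ($O \in \interior(D)$, $w \in \bd D$, and $y \in \bd C_+$) all handled correctly.
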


Our next result implies that in the Minkowski setting, $\alpha$-expanded bodies are relatively $\alpha$-fat.

\begin{lemma}\label{lem:mink-exp-fat}
Let $C$, $D$, $\alpha$, and $C_+$ be as in Lemma~\ref{lem:mink-exp-bdry}. Then $C_+$ is relatively $\alpha$-fat and $\alpha$-surface-fat. More precisely, for any $z \in \bd C_+$, any $0 < r \leq \alpha$, and letting $B := \ballMink[D](z,r)$: 
\begin{enumerate}
\item[$(i)$] $\volMink[D](C_+ \cap B) ~ \CGEQ ~ \volMink[D](B)$
\item[$(ii)$] $\areaMink[D](\bd C_+ \cap \interior(B)) ~ \CGEQ ~ \areaMink[D](B)$.
\end{enumerate}
\end{lemma}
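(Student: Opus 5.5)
For part (i), I will exhibit a translate of $\frac{r}{2}D$ inside $C_+\cap B$. Fix $z\in\bd C_+$ and $0<r\le\alpha$. Since $C_+=C+\alpha D$, write $z=c+\alpha d_0$ with $c\in C$ and $d_0\in D$. Then $\|z-c\|_D\le\alpha$, and in fact $\|z-c\|_D=\alpha$, since otherwise $z$ would be an interior point of $C_+$. Let $w$ be the point on the segment $\overline{cz}$ with $\|z-w\|_D=r/2$, so $w=z+\frac{r}{2\alpha}(c-z)$. For any $u\in\frac{r}{2}D$, the point $w+u$ satisfies $\|w+u-c\|_D\le(\alpha-\frac r2)+\frac r2=\alpha$, so $w+u\in c+\alpha D\subseteq C_+$. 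Also $\|w+u-z\|_D\le \frac r2+\frac r2=r$, using central symmetry of $D$ so that the gauge is a norm. Hence $w+\frac r2 D\subseteq C_+\cap B$. By Lemma~\ref{lem:mink-ball-growth}(i) and translation invariance of $\volMink[D]$, we get $\volMink[D](C_+\cap B)\CGEQ\omega_d(r/2)^d\CEQ\volMink[D](B)$.

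For part (ii), I will apply the relative isoperimetric inequality, Lemma~\ref{lem:mink-HT-iso}, with $E=C_+$. This requires $0<\mu<1$ and a lower bound on $\min(\mu,1-\mu)$. Part (i) gives $\mu\CGEQ 1$. For $1-\mu$, the complement side, I use a supporting hyperplane $H$ of $C_+$ at $z$. The open half-space beyond $H$ misses $C_+$. Since $B=z+rD$ is centrally symmetric about $z$, which lies on $H$, the half of $B$ beyond $H$ has exactly half of $B$'s volume. So $\volMink[D](B\setminus C_+)\ge\frac12\volMink[D](B)$, i.e.\ $1-\mu\ge\frac12$. The reflection $y\mapsto 2z-y$ preserves $B$ and swaps the two closed half-spaces, which shows each carries volume $\frac12$, since the hyperplane itself is null. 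In particular $\mu\le\frac12<1$ and $\mu>0$, so the lemma's hypotheses hold, and $\beta\CGEQ\min(\mu,1-\mu)\CGEQ 1$. This is exactly (ii).

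I expect the main subtlety to be the claim $\|z-c\|_D=\alpha$ in part (i). The argument there needs only $\|z-c\|_D\le\alpha$ together with $r\le\alpha$, since $w$ can be taken at distance $\frac{r}{2}$ from $z$ toward $c$; if $\|z-c\|_D<r/2$, simply take $w=c$. In that case $w+\frac r2D\subseteq c+\alpha D$ and it lies within distance $r$ of $z$. So the construction works uniformly, and the remaining steps are direct applications of Lemmas~\ref{lem:mink-ball-growth} and~\ref{lem:mink-HT-iso}.
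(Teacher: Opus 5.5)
Your proposal is correct and follows essentially the same route as the paper. For (i) you inscribe the translate $w+\frac{r}{2}D$ at distance $r/2$ from $z$ along the segment toward a point of $C$ at $D$-distance $\alpha$; for (ii) you use a supporting hyperplane at $z$ that bisects $B$ to get $\mu\le\frac12$, and then invoke Lemma~\ref{lem:mink-HT-iso} so that $\min(\mu,1-\mu)=\mu\CGEQ 1$, with your extra checks (the hypothesis $0<\mu<1$ and the fallback $w=c$) being harmless refinements the paper leaves implicit.
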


\begin{proof}
Fix $z \in \bd C_+$, $0 < r \leq \alpha$, and let $B := \ballMink[D](z,r)$. For part~(i), note that since 
\[
    C_+
        ~ = ~ \{p \in \RE^d \ST \distMink[D](p,C) \leq \alpha\},
\]
and $z \in \bd C_+$, we have $\distMink[D](z,C) = \alpha$. Let $x$ be any point in $C$ such that $\distMink[D](z,x) = \alpha$, and let $y$ be the point on the segment $\overline{z x}$ at distance $r/2$ from $z$ (see Figure~\ref{f:mink-exp-fat}(a)). We assert that 
\[
    \ballMink[D] \!\left(y,\frac{r}{2}\right)
        ~ \subseteq ~ C_+ \cap B,
\]

\begin{figure}[htbp]
  \centerline{\includegraphics[scale=0.40]{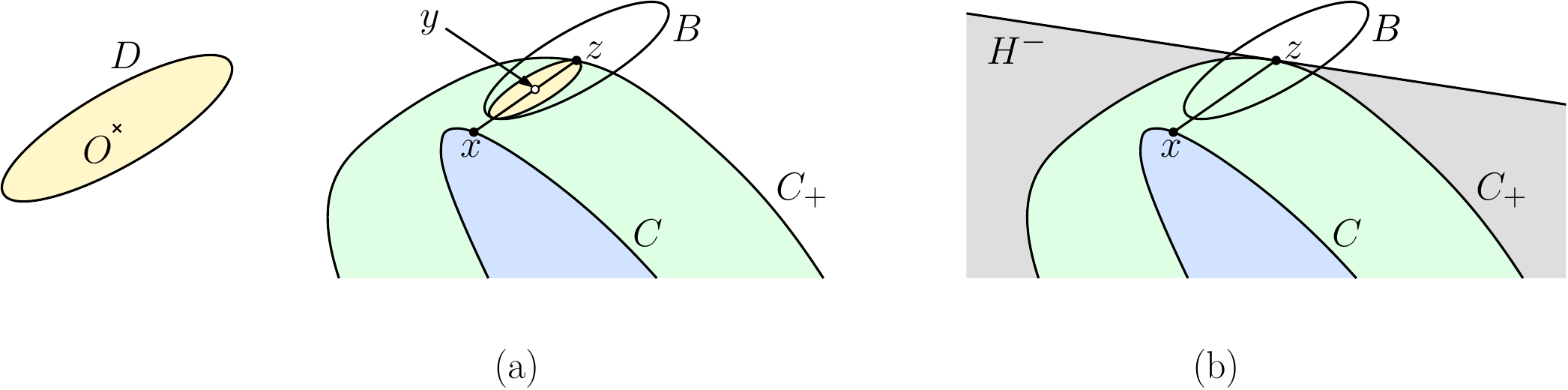}}
  \caption{Proof of Lemma~\ref{lem:mink-exp-fat}.}
  \label{f:mink-exp-fat}
\end{figure}

To see why, observe that by the triangle inequality any $p \in \ballMink[D](y,r/2)$ satisfies
\[
    \distMink[D](p,z)
        ~ \leq ~ \distMink[D](p,y) + \distMink[D](y,z)
        ~ \leq ~ r,
\]
implying $p \in B$. Similarly, 
\begin{align*}
    \distMink[D](p,x)
        & ~ \leq ~ \distMink[D](p,y) + \distMink[D](y,x)
          ~ =    ~ \distMink[D](p,y) + \bigl( \distMink[D](z,x) - \distMink[D](z,y) \bigr) \\
        & ~ \leq ~ \frac{r}{2} + \left( \alpha - \frac{r}{2} \right)
          ~ =    ~ \alpha,
\end{align*}
so $p \in \ballMink[D](x,\alpha) \subseteq C_+$, establishing the assertion. Therefore, we have
\[
    \volMink[D](C_+ \cap B)
        ~ \geq  ~ \volMink[D] \!\left( \ballMink[D] \left( y, \frac{r}{2} \right) \right)
        ~ =     ~ 2^{-d} \cdot \volMink[D](B)
        ~ \CGEQ ~ \volMink[D](B),
\]
which establishes~(i).

For part~(ii), we will employ the relative isoperimetric inequality for Minkowski balls. Let
\[
    \mu
        ~ := ~ \frac{\volMink[D](C_+ \cap B)}{\volMink[D](B)}.
\]
By part~(i), we have $\mu \CGEQ 1$. Let $H$ be a supporting hyperplane to $C_+$ at $z$, and let $H^-$ be the closed halfspace bounded by $H$ that contains $C_+$ (see Figure~\ref{f:mink-exp-fat}(b)). Then
\[
    C_+ \cap B 
        ~ \subseteq ~ B \cap H^-.
\]
Since $B$ is centrally symmetric about $z$ and $H$ passes through $z$, the hyperplane $H$ bisects $B$, and hence $\mu \leq 1/2$. Applying Lemma~\ref{lem:mink-HT-iso} to the convex body $C_+$ and the ball $B$, we obtain
\[
    \frac{\areaMink[D](\bd C_+ \cap \interior(B))}{\areaMink[D](B)}
        ~ \CGEQ ~ \min(\mu, 1-\mu)
        ~ =     ~ \mu
        ~ \CGEQ ~ 1,
\]
which proves~(ii).
\end{proof}

Next, we relate the $\alpha$-boundary covering number to the surface areas of $C$ and $C_+$.

\begin{lemma}\label{lem:mink-exp-surf}
Let $C$, $D$, $\alpha$, and $C_+$ be as in Lemma~\ref{lem:mink-exp-bdry}. Then
\[
    \frac{\areaMink[D](C)}{\omega_{d-1} \, \alpha^{d-1}}
        ~ \CLEQ ~ \NbdMink[D](C,\alpha)
        ~ \CLEQ ~ \frac{\areaMink[D](C_+)}{\omega_{d-1} \, \alpha^{d-1}}.
\]
\end{lemma}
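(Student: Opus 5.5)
Both inequalities come from packing/covering arguments, mirroring Lemma~\ref{lem:mink-exp-ii} but with surface area in place of volume.

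\emph{Lower bound.} Take any cover of $\bd C$ by balls $\ballMink[D](x_i,\alpha)$ with $m = \NbdMink[D](C,\alpha)$. Then
\[
    \areaMink[D](C) ~ \leq ~ \sum_i \areaMink[D]\bigl(\bd C \cap \ballMink[D](x_i,\alpha)\bigr).
\]
Each piece $\bd C \cap \ballMink[D](x_i,\alpha)$ is the boundary of the convex body $C \cap \ballMink[D](x_i,\alpha)$ intersected with $\bd C$. The boundary of that body lies inside $\ballMink[D](x_i,\alpha)$, so by monotonicity of Holmes--Thompson area (Lemma~\ref{lem:HT-area-monotone}) the piece has area at most
\[
    \areaMink[D]\bigl(C\cap \ballMink[D](x_i,\alpha)\bigr) ~ \leq ~ \areaMink[D]\bigl(\ballMink[D](x_i,\alpha)\bigr).
\]
(The degenerate case of empty interior contributes zero.) By Lemma~\ref{lem:mink-ball-growth}(ii) the last quantity is $\CLEQ \omega_{d-1}\alpha^{d-1}$. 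Summing over $i$ and taking the minimum over covers gives the first inequality.

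\emph{Upper bound.} Here I use the fatness of $C_+$. Let $Y \subset \bd C_+$ be a maximal $2\alpha$-separated set in $\distMink[D]$.
\begin{itemize}
\item[] \emph{Covering.} By Lemma~\ref{lem:mink-exp-bdry}, every $x \in \bd C$ is within distance $\alpha$ of some point $y' \in \bd C_+$. By maximality, $y'$ is within $2\alpha$ of some $y \in Y$. Hence the balls $\ballMink[D](y,3\alpha)$, $y \in Y$, cover $\bd C$, so $\NbdMink[D](C,3\alpha) \leq |Y|$. Lemma~\ref{lem:mink-ball-cover} with $r=\alpha$, $r'=3\alpha$ then gives $\NbdMink[D](C,\alpha) \CLEQ |Y|$.
\item[] \emph{Packing.} The open balls $\interior(\ballMink[D](y,\alpha))$, $y \in Y$, are pairwise disjoint. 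By Lemma~\ref{lem:mink-exp-fat}(ii) with $r=\alpha$, each satisfies
\[
    \areaMink[D]\bigl(\bd C_+ \cap \interior(\ballMink[D](y,\alpha))\bigr) ~ \CGEQ ~ \areaMink[D]\bigl(\ballMink[D](y,\alpha)\bigr) ~ \CEQ ~ \omega_{d-1}\alpha^{d-1},
\]
using Lemma~\ref{lem:mink-ball-growth}(ii) for the last step. Summing over the disjoint pieces of $\bd C_+$ gives $|Y|\,\omega_{d-1}\alpha^{d-1} \CLEQ \areaMink[D](C_+)$.
\end{itemize}
Combining the covering and packing bounds proves the second inequality.

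The main obstacle is the packing step. Without fatness, a ball centered on the boundary could capture arbitrarily little area, which is why the centers are placed on $\bd C_+$ rather than on $\bd C$. The cost is the radius inflation from $\alpha$ to $3\alpha$, which Lemma~\ref{lem:mink-ball-cover} absorbs at a $3^d$ factor. One minor point to check in the lower bound is that area measured inside a ball is controlled via the convex set $C\cap \ballMink[D](x_i,\alpha)$, so that monotonicity applies.
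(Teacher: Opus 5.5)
Your proof is correct and follows essentially the paper's argument. The lower bound is identical (subadditivity, $\bd C\cap B\subseteq \bd(C\cap B)$, monotonicity, ball growth). For the upper bound, the paper takes a maximal $\alpha$-separated set in $\bd C_+$, with packing radius $\alpha/2$ and covering radius $2\alpha$; your $2\alpha$-separation with packing radius $\alpha$ and covering radius $3\alpha$ only changes the constant.

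One small correction: your parenthetical that the empty-interior case contributes zero is false. If $D$ is a cube and $C$ has a facet parallel to one of its facets, a ball touching $C$ from outside can meet it in a flat piece of positive area. The needed bound $\areaMink[D](\bd C\cap B)\leq\areaMink[D](B)$ still holds there, since a flat convex piece $F\subseteq B$ satisfies $2\,\areaMink[D](F)\leq\areaMink[D](B)$ by the Cauchy--Crofton projection formula.
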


\begin{proof}
We begin with the upper bound. Let $S \subset \bd C_+$ be a maximal $\alpha$-separated subset. Then the set of balls $\{\ballMink[D](s,\alpha)\}_{s \in S}$ covers $\bd C_+$, while $\{\ballMink[D](s,\frac{\alpha}{2})\}_{s \in S}$ have pairwise disjoint interiors. 

We begin by showing that $\{\ballMink[D](s, 2 \alpha)\}_{s \in S}$ covers $\bd C$. To prove this, consider any $x \in \bd C$. By Lemma~\ref{lem:mink-exp-bdry}, there exists $y \in \bd C_+$ such that $\distMink[D](x,y) = \alpha$. Since the $\alpha$-radius balls cover $\bd C_+$, there exists $s \in S$ such that $\distMink[D](s, y) \leq \alpha$. Hence, by the triangle inequality,
\[
    \distMink[D](s,x)
        ~ \leq ~ \distMink[D](s,y) + \distMink[D](y,x)
        ~ \leq ~ 2 \alpha,
\]
implying that $\{\ballMink[D](s, 2 \alpha)\}_{s \in S}$ covers $\bd C$, and hence, $\NbdMink[D](C, 2\alpha) \leq |S|$. By applying Lemma~\ref{lem:mink-ball-cover} (with $E = \bd C$), this implies that
\begin{equation} \label{eq:mink-exp-surf-1}
    \NbdMink[D](C, \alpha)
        ~ \CLEQ ~ \NbdMink[D](C, 2 \alpha)
        ~ \leq  ~ |S|.
\end{equation}

Appealing to the relative surface fatness of the expanded body, for each $s \in S$, we apply Lemma~\ref{lem:mink-exp-fat}(ii) (with $z = s$ and $r = \alpha/2$) and then apply the surface area bound of Lemma~\ref{lem:mink-ball-growth}(ii) to obtain
\begin{align*}
    \areaMink[D] \!\left( \bd C_+ \cap \interior \!\left( \ballMink[D] \left( s, \frac{\alpha}{2} \right) \right) \right)
        & ~ \CGEQ ~ \areaMink[D] \!\left( \ballMink[D] \left( s, \frac{\alpha}{2} \right) \right) \\
        & ~ \CGEQ ~ \omega_{d-1} \left( \frac{\alpha}{2} \right)^{d-1}.
\end{align*}
Summing over $S$ and by the disjointness of these interiors, we have
\[
    \areaMink[D](C_+)
        ~ \geq  ~ \sum_{s \in S} \areaMink[D] \!\left( \bd C_+ \cap \interior \!\left( \ballMink[D] \left( s, \frac{\alpha}{2} \right) \right) \right)
        ~ \CGEQ ~ |S| \, \omega_{d-1} \, \alpha^{d-1}.
\]
Combining this with Eq.~\eqref{eq:mink-exp-surf-1}, we conclude that
\[
    \NbdMink[D](C, \alpha)
        ~ \CLEQ ~ |S|
        ~ \CLEQ ~ \frac{\areaMink[D](C_+)}{\omega_{d-1}\alpha^{d-1}},
\]
which establishes the upper bound.

For the lower bound, let $S$ denote the set of centers of any minimum-sized covering of $\bd C$ by Minkowski balls of radius $\alpha$. By subadditivity,
\begin{equation} \label{eq:mink-exp-surf-2}
    \areaMink[D](C)
        ~ \leq ~ \sum_{s \in S} \areaMink[D] \bigl( \bd C \cap \ballMink[D](s, \alpha) \bigr).
\end{equation}
Also, for any $s \in S$,
\[
    \bd C \cap \ballMink[D](s,\alpha)
        ~ \subseteq ~ \bd \bigl( C \cap \ballMink[D](s, \alpha) \bigr),  
\]
and hence by simple inclusion and the monotonicity of Holmes--Thompson area for convex bodies (Lemma~\ref{lem:HT-area-monotone}), we have
\begin{equation} \label{eq:mink-exp-surf-3}
    \areaMink[D] \bigl( \bd C \cap \ballMink[D](s, \alpha) \bigr)
        ~ \leq ~ \areaMink[D] \bigl( C \cap \ballMink[D](s, \alpha) \bigr)
        ~ \leq ~ \areaMink[D] \bigl( \ballMink[D](s, \alpha) \bigr).
\end{equation}
Combining Eqs.~\eqref{eq:mink-exp-surf-2} and~\eqref{eq:mink-exp-surf-3} with our bound on the areas of Minkowski balls (Lemma~\ref{lem:mink-ball-growth}(ii)) yields
\[
    \areaMink[D](C)
            ~ \leq  ~ |S| \cdot \areaMink[D] \bigl( \ballMink[D](s, \alpha) \bigr)
            ~ \CLEQ ~ |S| \, \omega_{d-1} \, \alpha^{d-1},
\]
and hence
\[
    \NbdMink[D](C, \alpha)
        ~ =  ~  |S|
        ~ \CGEQ ~ \frac{\areaMink[D](C)}{\omega_{d-1} \, \alpha^{d-1}},
\]
which establishes the lower bound and completes the proof.
\end{proof}

\subsection{Boundary Properties in the Hilbert Setting} \label{s:prop-hilb-exp-surf}

We now establish the Hilbert analogs of the expansion properties used for boundary coverings. Let $K \subset \RE^d$ be a convex body, let $G \subset \interior(K)$ be a convex body, and let $0 < \alpha \leq 1$. Unlike the Minkowski case, the boundary-transfer property is more delicate, so we defer its proof to Appendix~\ref{app:hilb-exp-surf-i}. 

\begin{restatable}{lemma}{lemHilbExpBdry} \label{lem:hilb-exp-bdry}
Let $K$ be a convex body in $\RE^d$, let $G \subset \interior(K)$ be a convex body. For any $0 < \alpha \leq 1$, let $G_+ := \expandH[K](G,\alpha)$. Then for any $x \in \bd G$, there exists $y \in \bd G_+$ such that $\distHilb[K](x,y) = \alpha$. In particular,
\[
    \bd G
        ~ \subseteq ~ \{z \ST \distHilb[K](z,\bd G_+) \leq \alpha\}.
\]
\end{restatable}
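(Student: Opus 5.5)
The proof will follow the supporting-hyperplane argument used in the Minkowski case (Lemma~\ref{lem:mink-exp-bdry}). The Euclidean ``perpendicular'' is replaced by a projective analogue adapted to the Hilbert geometry of $K$.

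\textbf{Reduction to a hyperplane.} Fix $x \in \bd G$ and let $H$ be a supporting hyperplane of $G$ at $x$. Let $H^-$ be the closed side containing $G$ and $H^+$ the open side. Since $x \in G \subset \interior(K)$, $H$ meets $\interior(K)$. Set $\varphi(y) := \distHilb[K](y,G)$ and $\psi(y) := \distHilb[K](y, H^- \cap \interior(K))$. Both are continuous on $\interior(K)$, and $\varphi \geq \psi$ because $G \subseteq H^-$. It suffices to find $y \in \interior(K)$ with $\distHilb[K](x,y) = \alpha$ and $\psi(y) \geq \alpha$. Then $\alpha \leq \psi(y) \leq \varphi(y) \leq \distHilb[K](y,x) = \alpha$, so $\varphi(y) = \alpha$. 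Finally, $\varphi(y) = \alpha$ puts $y$ in $\bd G_+$: $y \in G_+$ by definition, and points on the open segment from $y$ away from $x$ have $\varphi > \alpha$ (the segment is a geodesic), so $y$ is not interior to $G_+$. The ``in particular'' inclusion is then immediate.

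\textbf{Construction of $y$ (the main step).} I will look for a chord $\ell$ of $K$ through $x$, crossing $H$ transversally, with endpoints $a \in \bd K \cap H^+$ and $b \in \bd K \cap H^-$. It should admit supporting hyperplanes $H_a$ at $a$ and $H_b$ at $b$ such that $H$ lies in the pencil spanned by $H_a$ and $H_b$, i.e.\ $H_a \cap H_b \subseteq H$, with parallel hyperplanes allowed when the intersection is at infinity. Given such a chord, let $\pi$ be the central projection onto $\ell$ along the pencil through $H_a \cap H_b$. Then $\pi$ maps $\interior(K)$ into the open chord $(a,b)$, and the cross ratio computed from $H_a,H_b$ is dominated by the one computed from the actual boundary intersections. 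This should follow from the Funk variational formula, Eq.~\eqref{eq:funk}, applied with the two functionals defining $H_a$ and $H_b$. Consequently, $\pi$ is $1$-Lipschitz from $(\interior(K),\distHilb[K])$ to $((a,b),\distHilb[[a,b]])$, and it sends $H^- \cap \interior(K)$ into the closed half-chord $[x,b)$. Now take $y$ to be the point of $(x,a)$ with $\distHilb[K](x,y) = \alpha$. Such a point exists because the distance along the geodesic $\ell$ runs continuously from $0$ to $\infty$. For any $z \in H^- \cap \interior(K)$, the Lipschitz property gives $\distHilb[K](y,z) \geq \distHilb(\pi y, \pi z) \geq \distHilb(y,x) = \alpha$. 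Hence $\psi(y) \geq \alpha$.

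\textbf{Expected obstacle.} The delicate point is the existence of the chord $\ell$ with $H_a \cap H_b \subseteq H$. In the smooth, strictly convex case I would take $a \in \bd K \cap H^+$ to be a point maximizing a suitable projective ``height'' above $H$. Concretely, first apply a projective transformation sending $H \cap \bd K$'s projective span so that $H$ becomes a hyperplane of symmetry type, then take $a$ where the supporting hyperplane is parallel to $H$. Next choose $b$ on the line through $a$ and $x$. A continuity or degree argument over the chords through $x$ should then align $H_b$ so that $H_a \cap H_b \subseteq H$. My first attempt at this will be a fixed-point argument on the sphere of directions at $x$, using the map ``direction $\mapsto$ pencil point $H_a \cap H_b$''. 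The general case follows by approximating $K$ by smooth strictly convex bodies and passing to the limit, using continuity of Hilbert distances on compact subsets of $\interior(K)$. If this route proves awkward, the fallback is to take $y \in \bd\ballHilb[K](x,\alpha)$ maximizing $\psi$ and to show, via the Funk formula, that the maximum equals $\alpha$.
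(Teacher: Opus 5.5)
Your architecture matches the paper's. The chord you ask for, with supporting hyperplanes $H_a,H_b$ at its endpoints satisfying $H_a\cap H_b\subseteq H$, is exactly the paper's ``complementary chord.'' Your $1$-Lipschitz pencil projection is Busemann's pencil inequality, which the paper cites. You can indeed derive it from Eq.~\eqref{eq:funk} by restricting the supremum to the two functionals defining $H_a$ and $H_b$, since their log-ratio is a pencil coordinate.

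\textbf{The gap: you never prove that such a chord exists, and that is the whole difficulty.} Your concrete first idea fails. Choosing $a$ where the supporting hyperplane is parallel to $H$ puts $H_a\cap H$ at infinity. You would then need the supporting hyperplane at the other endpoint $b$ of the line through $a$ and $x$ to be parallel to $H$ as well, which generally is false. A fixed-point argument is also the wrong mechanism, because the requirement couples the two antipodal directions $u$ and $-u$ at $x$. Your fallback (maximize $\psi$ on $\bd\ballHilb[K](x,\alpha)$ and show the maximum is $\alpha$) merely restates the claim, since $\psi\le\alpha$ holds there automatically.

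The missing idea is an antipodal-coincidence (Borsuk--Ulam) argument, which is how the paper proceeds.
\begin{itemize}
\item For smooth $K$, identify $H$ with $\RE^{d-1}$ with origin $x$. For $u\in S^{d-1}$, let $a(u)$ be where the ray from $x$ in direction $u$ exits $K$, and let $H_{a(u)}$ be the unique supporting hyperplane there.
\item Let $f(u)\in\RE^{d-1}$ be the unique vector with $H\cap H_{a(u)}=\{z\in H:\langle f(u),z\rangle=1\}$, and set $f(u):=0$ if $H_{a(u)}\parallel H$.
\item Since $H_{a(u)}$ never contains $x$, $f$ is continuous. Borsuk--Ulam then yields $u$ with $f(u)=f(-u)$, i.e., $H\cap H_a=H\cap H_b$ for $a=a(u)$ and $b=a(-u)$.
\item $H_a\neq H_b$, since otherwise $x\in(a,b)$ would lie on a supporting hyperplane. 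Hence $H\cap H_a=H\cap H_b$ is exactly $H_a\cap H_b\subseteq H$.
\item For general $K$, approximate by smooth $K_\eps\supseteq K$ and pass to the limit in the chords and supporting hyperplanes, not in Hilbert distances. The concurrency condition is closed, and the endpoints stay separated because every chord passes through $x$.
\end{itemize}

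With this in place your argument goes through, up to one small fix. The conclusion $y\in\bd G_+$ does not follow from ``the segment is a geodesic.'' It follows from applying your projection bound to points $y'\in(y,a)$, which gives $\varphi(y')\geq\psi(y')\geq\distHilb[K](x,y')>\alpha$.
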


Our next result shows that, at bounded scales, Hilbert expansions are 
relatively $\alpha$-fat. 

\begin{lemma}\label{lem:hilb-exp-fat}
Let $K$, $G$, $\alpha$, and $G_+$ be as in Lemma~\ref{lem:hilb-exp-bdry}. Then $G_+$ is relatively $\alpha$-fat and $\alpha$-surface-fat. More precisely, for any $z \in \bd G_+$, any $0 < r \leq \alpha$, and letting
$B := \ballHilb[K](z,r)$:
\begin{enumerate}
\item[$(i)$] $\volHilb[K](G_+ \cap B) ~ \CGEQ ~ \volHilb[K](B)$.
\item[$(ii)$] $\areaHilb[K](\bd G_+ \cap \interior(B)) ~ \CGEQ ~ \areaHilb[K](B)$.
\end{enumerate}
\end{lemma}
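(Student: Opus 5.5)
We follow the pattern of the proof of Lemma~\ref{lem:mink-exp-fat}, replacing Minkowski distances by Hilbert distances and using that Euclidean segments are Hilbert geodesics.

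For part~(i), fix $z \in \bd G_+$ and $0 < r \leq \alpha$. Since $G_+ = \{p \ST \distHilb[K](p,G) \leq \alpha\}$ and $G \subset \interior(G_+)$ (Lemma~\ref{lem:hilb-exp-i}), continuity of the distance gives $\distHilb[K](z,G) = \alpha$. By compactness of $G$, choose $x \in G$ with $\distHilb[K](z,x) = \alpha$. Let $y$ be the point on the segment $\overline{zx}$ with $\distHilb[K](z,y) = r/2$. Additivity of the Hilbert metric on collinear triples gives $\distHilb[K](y,x) = \alpha - r/2$. For any $p \in \ballHilb[K](y,r/2)$, the triangle inequality yields two facts:
\begin{itemize}
\item[] $\distHilb[K](p,z) \leq r$, so $p \in B$;
\item[] $\distHilb[K](p,x) \leq \alpha$, so $p \in G_+$.
\end{itemize}
Hence $\ballHilb[K](y,r/2) \subseteq G_+ \cap B$. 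By Lemma~\ref{lem:hilb-ball-growth}(i), applicable since $r \leq 1 \leq \RLocal$,
\[
    \volHilb[K](G_+\cap B) ~\geq~ \volHilb[K]\bigl(\ballHilb[K](y,r/2)\bigr) ~\CGEQ~ \omega_d (r/2)^d ~\CGEQ~ \volHilb[K](B).
\]
This step has no real obstacle. Unlike the Minkowski case there is no exact $2^{-d}$ scaling, but the growth lemma absorbs this into the $c^d$ constant.

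For part~(ii), we apply the Hilbert relative isoperimetric inequality, Lemma~\ref{lem:hilbert-HT-iso}, with $E = G_+$, center $z$, and radius $r \leq \alpha \leq 1$. Its hypotheses hold:
\begin{itemize}
\item[] $G_+$ is a convex body with $G_+ \subset \interior(K)$ by Lemma~\ref{lem:hilb-exp-i};
\item[] $z \in \bd G_+$, as required.
\end{itemize}
The lemma then gives
\[
    \frac{\areaHilb[K](\bd G_+ \cap \interior(B))}{\areaHilb[K](B)} ~\CGEQ~ \mu := \frac{\volHilb[K](G_+\cap B)}{\volHilb[K](B)} ~\CGEQ~ 1,
\]
where the last inequality is part~(i).

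The main potential obstacle is the one that, in the Minkowski case, forced the bound $\min(\mu,1-\mu)$ and required the bisection argument to get $\mu \leq 1/2$. Hilbert balls are not centrally symmetric about $z$ in the Euclidean sense, so a supporting hyperplane need not bisect them. This is precisely why Lemma~\ref{lem:hilbert-HT-iso} was formulated with the boundary passing through the center and the conclusion $\beta \CGEQ \mu$ rather than $\min(\mu,1-\mu)$. If we had to avoid that formulation, we would instead note that $G_+ \cap B$ lies in a halfspace through $z$. We would then use the Macbeath sandwich (Lemma~\ref{lem:mac-hilbert}) together with the local comparison (Lemma~\ref{lem:local-ht-compare}): these show that the complementary halfspace contains $z + \sigma r\,(A(z)\cap H^+)$, which has Hilbert volume $\CGEQ \volHilb[K](B)$, so $1-\mu \CGEQ 1$. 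With the lemma as stated, however, part~(ii) follows immediately from part~(i).
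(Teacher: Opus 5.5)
Your proposal is correct and follows the paper's proof essentially step for step. Part~(i) uses the same inscribed ball $\ballHilb[K](y,r/2) \subseteq G_+ \cap B$ on the segment from $z$ to a nearest point of $G$, together with Lemma~\ref{lem:hilb-ball-growth}(i); part~(ii) applies the anchored Hilbert relative isoperimetric inequality (Lemma~\ref{lem:hilbert-HT-iso}) to $E = G_+$ with $z \in \bd G_+$, giving $\beta \CGEQ \mu \CGEQ 1$, exactly as in the paper, so your closing remark about an unanchored alternative is not needed.
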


\begin{proof}
Fix $z \in \bd G_+$, $0 < r \leq \alpha$, and let $B := \ballHilb[K](z,r)$. For part~(i), the argument is the same inscribed-ball argument as in the Minkowski case. Since $z \in \bd G_+$, there exists $x \in G$ such that $\distHilb[K](z,x) = \alpha$. Let $y$ be the point on the segment $\overline{z x}$ at distance $r/2$ from $z$.

As in the Minkowski case (Lemma~\ref{lem:mink-exp-fat}), the triangle inequality implies that $\ballHilb[K]( y, r/2) \subseteq G_+ \cap B$, and hence,
\[
    \volHilb[K](G_+ \cap B)
        ~ \geq ~ \volHilb[K] \!\left( \ballHilb[K] \left( y, \frac{r}{2} \right) \right).
\]
Applying Lemma~\ref{lem:hilb-ball-growth}(i) with radii $r/2$ and $r$, and using $r \leq \alpha \leq 1$, we have
\[
    \volHilb[K](G_+ \cap B) ~ \CGEQ ~ \volHilb[K](B),
\]
which establishes~(i).

For part~(ii), we will employ the relative isoperimetric inequality for Hilbert balls. Let
\[
    \mu ~ := ~ \frac{\volHilb[K](G_+ \cap B)}{\volHilb[K](B)}.
\]
By part~(i), we have $\mu \CGEQ 1$. Applying Lemma~\ref{lem:hilbert-HT-iso} to the convex body $G_+$ and the ball $B$ yields
\[
    \areaHilb[K](\bd G_+ \cap \interior(B))
        ~ \CGEQ ~ \mu \cdot \areaHilb[K](B)
        ~ \CGEQ ~ \areaHilb[K](B),
\]
which proves~(ii).
\end{proof}

Next, we relate the $\alpha$-boundary covering number to the surface areas of $G$ and $G_+$.

\begin{lemma}\label{lem:hilb-exp-surf}
Let $K$, $G$, $\alpha$, and $G_+$ be as in Lemma~\ref{lem:hilb-exp-bdry}. Then
\[
    \frac{\areaHilb[K](G)}{\omega_{d-1} \, \alpha^{d-1}}
        ~ \CLEQ ~ \Nbd[K](G,\alpha)
        ~ \CLEQ ~ \frac{\areaHilb[K](G_+)}{\omega_{d-1} \, \alpha^{d-1}}.
\]
\end{lemma}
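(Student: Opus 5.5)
The plan is to follow the proof of Lemma~\ref{lem:mink-exp-surf} step by step, replacing each Minkowski ingredient with its bounded-radius Hilbert counterpart. Throughout, the relevant radii are at most $2\alpha \leq 2 \leq \RLocal/2$, so Lemmas~\ref{lem:hilb-ball-cover} and~\ref{lem:hilb-ball-growth} apply.

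\emph{Upper bound.} Let $S \subset \bd G_+$ be a maximal $\alpha$-separated set in $\distHilb[K]$. Its two key properties are:
\begin{itemize}
\item[(a)] the balls $\ballHilb[K](s,\alpha)$, $s \in S$, cover $\bd G_+$;
\item[(b)] the balls $\ballHilb[K](s,\alpha/2)$, $s \in S$, have pairwise disjoint interiors.
\end{itemize}
We first show that the balls $\ballHilb[K](s,2\alpha)$ cover $\bd G$. Take $x \in \bd G$. By Lemma~\ref{lem:hilb-exp-bdry} there is $y \in \bd G_+$ with $\distHilb[K](x,y)=\alpha$. By (a), some $s \in S$ satisfies $\distHilb[K](s,y) \leq \alpha$, so the triangle inequality gives $\distHilb[K](s,x) \leq 2\alpha$. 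Hence $\Nbd[K](G,2\alpha) \leq |S|$. Lemma~\ref{lem:hilb-ball-cover}, with $r=\alpha$ and $r'=2\alpha \leq \RLocal/2$, then gives $\Nbd[K](G,\alpha) \CLEQ |S|$.

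Next we lower-bound the area. For each $s \in S$, apply Lemma~\ref{lem:hilb-exp-fat}(ii) with $z=s$ and $r=\alpha/2$, followed by Lemma~\ref{lem:hilb-ball-growth}(ii):
\[
    \areaHilb[K]\bigl(\bd G_+ \cap \interior(\ballHilb[K](s,\alpha/2))\bigr) ~\CGEQ~ \omega_{d-1}(\alpha/2)^{d-1}.
\]
The pieces $\bd G_+ \cap \interior(\ballHilb[K](s,\alpha/2))$ are pairwise disjoint by (b), and each is a subset of $\bd G_+$. Summing over $s \in S$ gives $\areaHilb[K](G_+) \CGEQ |S|\,\omega_{d-1}\alpha^{d-1}$, which proves the upper bound.

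\emph{Lower bound.} Take a minimum cover of $\bd G$ by Hilbert balls $\ballHilb[K](s,\alpha)$, $s \in S$. Subadditivity gives
\[
    \areaHilb[K](G) ~\leq~ \sum_{s\in S} \areaHilb[K]\bigl(\bd G \cap \ballHilb[K](s,\alpha)\bigr).
\]
For each $s$, the set $\bd G \cap \ballHilb[K](s,\alpha)$ lies in the boundary of the convex body $G \cap \ballHilb[K](s,\alpha)$. Here we use that Hilbert balls are convex and lie in $\interior(K)$. Monotonicity of Hilbert Holmes--Thompson area (Lemma~\ref{lem:HT-area-monotone}) therefore bounds each summand by $\areaHilb[K](\ballHilb[K](s,\alpha))$. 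Lemma~\ref{lem:hilb-ball-growth}(ii) bounds this in turn by a $\CLEQ$ multiple of $\omega_{d-1}\alpha^{d-1}$. It follows that $\areaHilb[K](G) \CLEQ |S|\,\omega_{d-1}\alpha^{d-1}$, which is the lower bound.

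\emph{Main obstacle.} Most of the difficulty has already been isolated in Lemmas~\ref{lem:hilb-exp-bdry} and~\ref{lem:hilb-exp-fat}, which the argument simply invokes. The one point that needs care is the disjointness step. Unlike in a norm, $\alpha$-separation does not immediately give disjoint $\alpha/2$-balls unless we use the triangle inequality for the Hilbert metric, and that inequality does hold. A second detail is that the boundary pieces being summed are open-ball intersections, so the sum is genuinely at most $\areaHilb[K](\bd G_+)$. Finally, the constants hidden in $\CLEQ$ depend only on $\RLocal$, which is fixed, so they are uniform in $d$ raised to the power $d$, as the notation requires.
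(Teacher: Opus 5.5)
Your proposal is correct and follows the paper's proof essentially verbatim. The paper also transplants the proof of Lemma~\ref{lem:mink-exp-surf}: Lemma~\ref{lem:hilb-exp-bdry} plus the triangle inequality covers $\bd G$ by $2\alpha$-balls centered at a maximal $\alpha$-separated subset of $\bd G_+$; Lemmas~\ref{lem:hilb-exp-fat}(ii) and~\ref{lem:hilb-ball-growth}(ii) give the packing bound; Lemma~\ref{lem:hilb-ball-cover} returns to radius $\alpha$; and the lower bound uses monotonicity of Holmes--Thompson area with Lemma~\ref{lem:hilb-ball-growth}(ii). Your check that all radii stay at most $\RLocal/2$ matches the paper's remark that $\alpha \leq \RLocal/4$ keeps everything in the bounded regime.
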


\begin{proof}
The proof is parallel to the Minkowski case (Lemma~\ref{lem:mink-exp-surf}) but by replacing the Minkowski-based lemmas with their Hilbert counterparts. In particular, for the upper bound, we use Lemma~\ref{lem:hilb-exp-bdry} in place of Lemma~\ref{lem:mink-exp-bdry}, Lemma~\ref{lem:hilb-exp-fat}(ii) in place of Lemma~\ref{lem:mink-exp-fat}(ii), and Lemmas~\ref{lem:hilb-ball-cover} and~\ref{lem:hilb-ball-growth}(ii) in place of the corresponding Minkowski ball estimates. 

For the lower bound, we argue as in the Minkowski case, using the monotonicity of the Hilbert Holmes--Thompson surface area (Lemma~\ref{lem:HT-area-monotone}) and Lemma~\ref{lem:hilb-ball-growth}(ii). Note that the assumption $\alpha \leq \RLocal/4$ ensures that all Hilbert radii involved remain in the bounded regime.
\end{proof}

\section{Duality for Boundary Coverings} \label{s:boundary-cover}

In this section, we prove our duality results for boundary coverings. We begin with a proof of the second part of Theorem~\ref{thm:hilbert-cover-duality}, which we restate below.

\thmHilbertCoverDuality*

\begin{proof} (Of part~(ii).)
Fix $\alpha \in (0, 1]$ and set $G_+ = \expandH[K](G,\alpha)$. By Lemma~\ref{lem:hilb-exp-i}, $G_+$ is a convex body with $G \subset \interior(G_+) \subset \interior(K)$. By the upper-bound part of Lemma~\ref{lem:hilb-exp-surf} and the Holmes--Thompson polarity identity (Lemma~\ref{lem:HT-hilb-polarity}(ii))
\begin{equation}\label{eq:boundary-cover-duality-hilb-1}
    \Nbd[K](G, \alpha)
        ~ \CLEQ ~ \frac{\areaHilb[K](G_+)}{\omega_{d-1} \, \alpha^{d-1}}
        ~ \CEQ    ~ \frac{\areaHilb[\polar{G_+}](\polar{K})}{\omega_{d-1} \, \alpha^{d-1}}.
\end{equation}
Applying the lower-bound part of Lemma~\ref{lem:hilb-exp-surf} in the Hilbert geometry induced by $\polar{G_+}$, gives
\begin{equation}\label{eq:boundary-cover-duality-hilb-2}
    \frac{\areaHilb[\polar{G_+}](\polar{K})}{\omega_{d-1} \, \alpha^{d-1}}
        ~ \CLEQ ~ \Nbd[\polar{G_+}](\polar{K}, \alpha).
\end{equation}
Since $\alpha \leq 1$ and Section~\ref{s:covering} fixes $\RLocal \geq 8$, we have $\alpha \leq \RLocal/8$. Hence Lemma~\ref{lem:polar-exp-cover-hilb} applies with $U = \bd \polar{K}$, yielding 
\begin{equation}\label{eq:boundary-cover-duality-hilb-3}
    \Nbd[\polar{G_+}](\polar{K}, \alpha)
        ~ \CLEQ ~ \Nbd[\polar{G}](\polar{K}, \alpha).
\end{equation} 
By chaining Eqs.~\eqref{eq:boundary-cover-duality-hilb-1}--\eqref{eq:boundary-cover-duality-hilb-3} together, we obtain
\[
    \Nbd[K](G,\alpha)
        ~ \CLEQ ~ \Nbd[\polar{G}](\polar{K}, \alpha),
\]
which establishes the second inequality. To establish the first inequality, apply the same argument with $(G, K)$ replaced by $(\polar{K}, \polar{G})$.
\end{proof}

\bigskip

We now consider the duality results for covering by translates of a centered convex body. For any convex body $C$ in $\RE^d$ with $O \in C$, define its \emph{symmetric core} and \emph{symmetric union}, respectively by
\[
    C_{\cap} ~ := ~ C \cap (-C)
    \quad\text{and}\quad
    C_{\cup} ~ := ~ \conv(C \cup -C).
\]
Thus, $C_{\cap}$ is the largest centrally symmetric convex body contained in $C$, and $C_{\cup}$ is the smallest centrally symmetric convex body containing $C$. It is well known that $\polar{C_{\cap}} = (\polar C)_{\cup}$ (see, e.g., \cite[Section~1.6]{Sch14}).

We will use the following two technical lemmas. The auxiliary lemmas underlying these reductions are routine adaptations of standard arguments, and their proofs are deferred to Appendix~\ref{app:asym-boundary-mink}.

\begin{restatable}{lemma}{lemSymUnionHT} \label{lem:sym-union-HT}
Let $D\subset \RE^d$ be a centrally symmetric convex body, and let $C\subset \RE^d$ be a convex body with $O\in C$. Then:
\begin{enumerate}
\item[$(i)$] $\volMink[D](C_{\cup}) \, \CLEQ \, \volMink[D](C)$ and
\item[$(ii)$] $\areaMink[D](C_{\cup}) \, \CLEQ \, \areaMink[D](C)$.
\end{enumerate}
\end{restatable}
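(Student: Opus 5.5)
The plan is to compare both $C_{\cup}$ quantities with the corresponding quantities for the difference body $C - C$, and then apply the Rogers--Shephard inequality. Rogers--Shephard controls difference bodies by a factor $\binom{2k}{k} \leq 4^k$ in dimension $k$, which is exactly a $\CLEQ$ loss.

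\textbf{Step 1 (containment).} First I show $C_{\cup} \subseteq C - C$, using only $O \in C$. A point of $C_{\cup}$ has the form $t c - (1-t) c'$ with $c, c' \in C$ and $t \in [0,1]$. Since $O \in C$ and $C$ is convex, $tc \in C$ and $(1-t)c' \in C$, so this point lies in $C - C$.

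\textbf{Step 2 (part (i)).} Because $D$ is fixed, $\volMink[D]$ is a constant multiple of $\lambda_d$. By Step 1 and Rogers--Shephard,
\[
\lambda_d(C_{\cup}) \leq \lambda_d(C-C) \leq \tbinom{2d}{d}\lambda_d(C) \leq 4^d \lambda_d(C).
\]
This gives (i).

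\textbf{Step 3 (part (ii)).} For the area, I rewrite the Holmes--Thompson area as an average of projection volumes. By Lemma~\ref{lem:slice}, the density at a boundary point with outer normal $u$ is $\lambda_{d-1}(\polar{D}\,|\,u^{\perp})/\omega_{d-1}$, which is $h_Z(u)$ for $Z := \Pi\polar{D}/\omega_{d-1}$. Here $\Pi$ is the projection body, which is a centrally symmetric zonoid. Hence
\[
\areaMink[D](E) = \int_{S^{d-1}} h_Z \, dS_E = d\,V(E[d-1],Z)
\]
for any convex body $E$. Since $Z$ is a zonoid, it has an even generating measure $\rho \geq 0$ on $S^{d-1}$ with $h_Z(u) = \int |\inner{u}{v}|\,d\rho(v)$. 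The Cauchy projection formula $\int |\inner{u}{v}|\,dS_E(u) = 2\lambda_{d-1}(E\,|\,v^{\perp})$ then gives
\[
\areaMink[D](E) = 2\int_{S^{d-1}} \lambda_{d-1}(E\,|\,v^{\perp})\,d\rho(v),
\]
a positive combination of $(d-1)$-dimensional projection volumes. This is consistent with the Crofton representation already used in Lemma~\ref{lem:HT-area-monotone}.

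\textbf{Step 4 (projections).} For each subspace $E_v = v^{\perp}$, Step 1 projects to $C_{\cup}|E_v \subseteq (C|E_v) - (C|E_v)$. Rogers--Shephard in dimension $d-1$ then gives
\[
\lambda_{d-1}(C_{\cup}|E_v) \leq \tbinom{2d-2}{d-1}\,\lambda_{d-1}(C|E_v) \leq 4^{d-1}\,\lambda_{d-1}(C|E_v).
\]
Integrating against $\rho$ yields $\areaMink[D](C_{\cup}) \leq 4^{d-1}\areaMink[D](C)$, which is (ii).

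I expect the main obstacle to be Step 3, namely justifying the projection-average representation of $\areaMink[D]$ with a nonnegative measure. The direct mixed-volume route, expanding $V((C-C)[d-1],Z)$ multilinearly, produces mixed terms $V(C[i],-C[d-1-i],Z)$, and those have no obvious upper bound by $V(C[d-1],Z)$. For Step 3 I would cite the standard facts that projection bodies are zonoids and the Cauchy projection formula. Alternatively, I could cite Schneider's Crofton representation for Minkowski Holmes--Thompson area (as in Lemma~\ref{lem:HT-area-monotone}) and apply Rogers--Shephard inside the Crofton integral, slice by slice.
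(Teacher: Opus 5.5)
Your proof is correct and takes essentially the paper's route. For (i), the containment $C_{\cup} \subseteq C - C$ plus Rogers--Shephard is the same. For (ii), both proofs write the Holmes--Thompson area as a nonnegative integral of hyperplane-projection volumes and then apply Rogers--Shephard inside each projection. The only difference is that you derive this Cauchy-type formula yourself through the projection body of $D^{\circ}$, with $\rho = S_{D^{\circ}}/(2\omega_{d-1})$, whereas the paper cites it from Holmes--Thompson (Lemma~\ref{lem:mink-cauchy-HT}).
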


\begin{restatable}[Covering with the symmetric core]{lemma}{lemCoreCover} \label{lem:core-cover}
Let $D$ be any convex body in $\RE^d$ such that the origin is either the centroid or the Santal\'o point of $D$. Then, for any set $U \subset \RE^d$ and any $\alpha > 0$,
\[
    \NcovMink[D_{\cap}](U, \alpha)
        ~ \CLEQ ~ \NcovMink[D](U, \alpha).
\]
\end{restatable}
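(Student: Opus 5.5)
The plan is a volumetric sandwich between $D_{\cap}$ and $D$, combined with a covering of $D$ by few translates of a scaled copy of $D_{\cap}$. The basic observation is that if $D$ can be covered by $m$ translates of $c\,D_{\cap}$ for an absolute constant $c$, then the result follows. Indeed, take an optimal cover $U \subseteq \bigcup_{i=1}^{N} (x_i + \alpha D)$ with $N = \NcovMink[D](U,\alpha)$. Replacing each $x_i + \alpha D$ by $m$ translates of $c\alpha D_{\cap}$ gives $\NcovMink[D_{\cap}](U, c\alpha) \leq m N$. Since $D_{\cap}$ is centrally symmetric, Lemma~\ref{lem:mink-ball-cover} then yields
\[
    \NcovMink[D_{\cap}](U,\alpha) ~ \CLEQ ~ c^d \, m \, N.
\]
So everything reduces to showing $\NcovMink[D_{\cap}](D, c) \CLEQ 1$, that is, $m \leq c'^{\,d}$.

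To bound $m$, I will use the standard Rogers--Zong / volume-ratio covering bound for convex bodies $A$ and symmetric $B$:
\[
    \NcovMink[B](A,1) ~ \leq ~ \frac{\lambda_d(A - B)}{\lambda_d(B)} \, \theta(B),
\]
where $\theta(B) \CLEQ 1$. A cruder version suffices here, via a maximal packing of translates of $\frac12 B$ with centers in $A$. With $A = D$ and $B = D_{\cap}$, we have $D - D_{\cap} \subseteq D - D$, and Rogers--Shephard gives
\[
    \lambda_d(D - D) ~ \leq ~ \binom{2d}{d} \lambda_d(D) ~ \leq ~ 4^d \lambda_d(D).
\]
Hence $m \CLEQ \lambda_d(D)/\lambda_d(D_{\cap})$, and no scaling $c$ is even needed.

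The key remaining step, which I expect to be the main obstacle, is the volume comparison $\lambda_d(D \cap (-D)) \CGEQ \lambda_d(D)$ when the origin is the centroid or the Santal\'o point of $D$.

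For the centroid case, I would invoke the classical Milman--Pajor inequality: if $\centroid(D) = O$, then $\lambda_d(D \cap (-D)) \geq 2^{-d}\lambda_d(D)$. One way to derive it is to note that $\lambda_d(D \cap (x - D))$, as a function of $x$, has its $1/d$-th power concave by Brunn--Minkowski on the support $D + D$. Integrating over $x$ gives $\lambda_d(D)^2$, and the centroid of this function is $2\,\centroid(D) = O$. A Grünbaum/Fradelizi-type bound at the centroid of a $1/d$-concave function then compares its value at $O$ with its maximum $\lambda_d(D)$, giving a factor $c^{-d}$.

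For the Santal\'o point case, I would pass to the polar. Since $O$ is the Santal\'o point of $D$, it is the centroid of $\polar{D}$. Using $\polar{(D_{\cap})} = (\polar{D})_{\cup}$ together with Rogers--Shephard, we get
\[
    \lambda_d\bigl(\polar{(D_{\cap})}\bigr) ~ \leq ~ \lambda_d(\polar{D} - \polar{D}) ~ \leq ~ 4^d \lambda_d(\polar{D}).
\]
Combining this with Blaschke--Santal\'o for the symmetric body $D_{\cap}$ and the reverse Santal\'o (Bourgain--Milman) inequality for $D$ about its Santal\'o point yields
\[
    \lambda_d(D_{\cap}) ~ \CGEQ ~ \frac{1}{\lambda_d\bigl(\polar{(D_{\cap})}\bigr)} ~ \CGEQ ~ \frac{1}{\lambda_d(\polar{D})} ~ \CEQ ~ \lambda_d(D),
\]
up to the usual $\omega_d^2$ normalizations, which cancel. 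Chaining these bounds completes the proof.
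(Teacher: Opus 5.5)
Your argument is essentially the paper's: a maximal packing of translates of $\tfrac12 D_{\cap}$ centered in $D$ yields a cover of $D$ by at most $c^d\lambda_d(D)/\lambda_d(D_{\cap})$ translates of $D_{\cap}$, and the Milman--Pajor bound $\lambda_d(D_{\cap}) \CGEQ \lambda_d(D)$ finishes the proof. The paper bounds the container more simply by $D + \tfrac12 D_{\cap} \subseteq \tfrac32 D$, so Rogers--Shephard is not needed there, and it just cites Milman--Pajor for both the centroid and Santal\'o cases.

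Your self-contained Santal\'o-point derivation has the two named inequalities swapped. The step $\lambda_d(D_{\cap}) \CGEQ \omega_d^2/\lambda_d(\polar{(D_{\cap})})$ is the reverse Santal\'o (Bourgain--Milman) inequality applied to $D_{\cap}$. The direction you need for $D$, namely $\lambda_d(D)\,\lambda_d(\polar{D}) \leq \omega_d^2$, is Blaschke--Santal\'o at the Santal\'o point. As you labeled them, each citation gives the opposite inequality, although the displayed chain itself is true once the labels are exchanged.

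There is also a slip in the centroid sketch. The maximum of $f(x) = \lambda_d(D \cap (x - D))$ is in general strictly smaller than $\lambda_d(D)$, for example for a triangle. However, $\int f = \lambda_d(D)^2$ and $f$ is supported on $2D$, so $\max f \geq 2^{-d}\lambda_d(D)$, which is enough for your argument.
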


Given these, we present our proof of Theorem~\ref{thm:mink-asym-boundary-cover-duality}, which is restated below. The proof proceeds in several steps. We begin by reducing the covering problem to a Holmes--Thompson area estimate in the geometry of the symmetric core. We then expand and symmetrize the body to be covered, allowing us to apply the duality results for symmetric bodies. Finally, the auxiliary symmetrizations are removed.

\thmMinkAsymBoundaryCoverDuality*

\begin{proof}
Define
\[
    D_{\cap} := D\cap(-D),\qquad
    C_{\cup} := \conv(C\cup -C),\qquad
    C_+ := C+\alpha D_{\cap},\qquad
    C_{\cup,+} := C_{\cup}+\alpha D_{\cap}.
\]

Since $D_{\cap} \subseteq D$, every translate of $\alpha D_{\cap}$ is contained in the corresponding translate of $\alpha D$. Therefore,
\begin{equation}\label{eq:bd-core-monotone}
    \NbdMink[D](C,\alpha)
        ~ \leq ~ \NbdMink[D_{\cap}](C,\alpha).
\end{equation}
By the upper-bound part of Lemma~\ref{lem:mink-exp-surf} in the Minkowski norm with unit ball $D_{\cap}$,
\begin{equation}\label{eq:main-step-bd-to-area}
    \NbdMink[D_{\cap}](C, \alpha)
        ~ \CLEQ ~ \frac{\areaMink[D_{\cap}](C_+)}{\omega_{d-1}\alpha^{d-1}}.
\end{equation}
Next, we pass to a symmetric intermediary on the primal side. Since $C\subseteq C_{\cup}$, we have
\[
    C_+
        ~ =         ~ C + \alpha D_{\cap}
        ~ \subseteq ~ C_{\cup} + \alpha D_{\cap}
        ~ =         ~ C_{\cup,+},
\]
and hence, by the monotonicity of Minkowski Holmes--Thompson area (Lemma~\ref{lem:HT-area-monotone}),
\begin{equation}\label{eq:main-area-enlarge}
    \areaMink[D_{\cap}](C_+)
        ~ \leq ~ \areaMink[D_{\cap}](C_{\cup,+}).
\end{equation}
Since both $D_{\cap}$ and $C_{\cup,+}$ are centrally symmetric, Holmes--Thompson duality 
(Lemma~\ref{lem:HT-mink-polarity}(ii)) implies that
\begin{equation}\label{eq:main-area-dualize}
    \areaMink[D_{\cap}](C_{\cup,+})
        ~ = ~ \areaMink[\polar{C_{\cup,+}}](\polar{D_{\cap}}).
\end{equation}
Recalling that $\polar{D_{\cap}} = (\polar D)_{\cup}$ and applying Lemma~\ref{lem:sym-union-HT}(ii) with $D := \polar{C_{\cup,+}}$ and $C := \polar D$, implies
\begin{equation}\label{eq:main-step3-output}
    \areaMink[\polar{C_{\cup,+}}](\polar{D_{\cap}})
        ~ \CLEQ ~ \areaMink[\polar{C_{\cup,+}}](\polar D).
\end{equation}
Now, applying the lower-bound part of Lemma~\ref{lem:mink-exp-surf} in the Minkowski norm with unit ball $D := \polar{C_{\cup,+}}$ and $C := \polar D$ yields 
\begin{equation}\label{eq:main-step3-area-to-cover}
    \frac{\areaMink[\polar{C_{\cup,+}}](\polar D)}{\omega_{d-1}\alpha^{d-1}}
        ~ \CLEQ ~ \NbdMink[\polar{C_{\cup,+}}](\polar D,\alpha).
\end{equation}
Combining Eqs.~\eqref{eq:bd-core-monotone}--\eqref{eq:main-step3-area-to-cover}, we have
\begin{equation}\label{eq:main-step4-output}
    \NbdMink[D](C, \alpha)
        ~ \CLEQ ~ \NbdMink[\polar{C_{\cup,+}}](\polar D,\alpha).
\end{equation}

It remains to remove the expansion ($+$) and the auxiliary symmetrization. Since $C_{\cup,+} = C_{\cup} + \alpha D_{\cap}$, Lemma~\ref{lem:polar-exp-cover-mink} applied with $C := C_{\cup}$, $D := D_{\cap}$, and $U := \bd \polar D$ gives
\begin{equation}\label{eq:main-step4-remove-plus}
    \NbdMink[\polar{C_{\cup,+}}](\polar D, \alpha)
        ~ \CLEQ ~ \NbdMink[\polar{C_{\cup}}](\polar D, \alpha).
\end{equation}
Again recall that $\polar{C_{\cup}} = (\polar C)_{\cap}$. Because $C$ is centered, the Santal\'o point of $\polar C$ is the origin. We can therefore apply Lemma~\ref{lem:core-cover} with $D := \polar C$ and $U := \bd \polar{D}$, to obtain
\begin{equation}\label{eq:main-step5-core}
    \NbdMink[\polar{C_{\cup}}](\polar D,\alpha)
        ~ \CLEQ ~ \NbdMink[\polar C](\polar D,\alpha).
\end{equation}
Now, combining Eqs.~\eqref{eq:main-step4-output}--\eqref{eq:main-step5-core}, we conclude that
\[
    \NbdMink[D](C, \alpha)
        ~ \CLEQ ~ \NbdMink[\polar C](\polar D, \alpha),
\]
which establishes the ``$\CLEQ$'' part of the result.

To complete the proof, we apply the same argument but with $(C, D)$ replaced by $(\polar D, \polar C)$. By
bipolarity, we obtain the reverse inequality
\[
    \NbdMink[\polar C](\polar D, \alpha)
        ~ \CLEQ ~ \NbdMink[D](C,\alpha).
\]
Here, the final application of Lemma~\ref{lem:core-cover} is to the body $D$, and its centering hypothesis holds because $D$ is centered. In conclusion, we have
\[
    \NbdMink[D](C, \alpha)
        ~ \CEQ ~ \NbdMink[\polar C](\polar D,\alpha),
\]
as desired.
\end{proof}

\section{Concluding Remarks} \label{s:conclusion}

We have established new polarity-duality results in two contexts. First, we considered both volumetric and boundary coverings in Hilbert geometries. We showed that for convex bodies $G$ and $K$ in $\RE^d$, where $O \in \interior(G)$ and $G \subset \interior(K)$ with radii $\alpha \in (0,1]$, the covering problem for $G$ using $\alpha$-balls in the Hilbert geometry induced by $K$ is equivalent, up to a universal loss $c^{d}$, to the dual covering problem for $\polar{K}$ in the Hilbert geometry induced by $\polar{G}$. Second, we considered both volumetric and boundary coverings by scaled translates of convex bodies, assuming that both bodies are centered.

Our proofs reveal a common mechanism across all the duality statements in the paper. First, one passes to an $\alpha$-expansion, converts covering estimates into Holmes--Thompson volume or area bounds, dualizes these bounds, and then transfers the resulting estimate back to the target geometry. A basic difficulty is that, after dualization, the natural ambient geometry is induced by $\polar{G_+}$ rather than by $\polar{G}$. To address this, we presented a polarity-expansion stability principle that controls this change of ambient geometry on the common domain $\polar{K}$.

For boundary coverings, it is necessary to overcome a second difficulty that is absent in the volumetric case: a boundary-centered metric ball may capture arbitrarily little boundary area unless one first enforces scale regularity. Here, the expansion has a genuine regularizing effect. Combined with bounded-radius localization and a localized relative isoperimetric inequality, this yields the surface-area bounds needed to carry out the same duality argument in the boundary setting.

Several questions remain. The first concerns the bounded-radius restriction in the Hilbert case. Since our arguments are essentially local, they apply only in the small-radius regime $\alpha\in(0,1]$. It would be interesting to know whether a comparable polarity duality continues to hold for all $\alpha > 0$, or whether the global Hilbert geometry of the ambient body creates a genuine obstruction at larger scales.

One may also formulate Pietsch-type duality conjectures for each of the covering duality problems considered here. More precisely, one may ask whether the present $c^{d}$-loss can be replaced by a dimension-free duality estimate in the sense of metric entropy, perhaps after an absolute rescaling of the radius parameter.

More broadly, the present results suggest the possibility of an entropy theory in Hilbert geometry, with Hilbert balls serving as intrinsic counterparts of Minkowski balls. This, in turn, raises the question of which aspects of the present duality mechanism extend beyond Hilbert geometry to other convex-projective or Finsler settings.


\pdfbookmark[1]{References}{s:ref}
\bibliographystyle{plainurl}
\bibliography{shortcuts,convex}


\appendix

\section{Proofs of Covering Lemmas} \label{app:covering-lemmas}

In this appendix, we prove the covering lemmas from Section~\ref{s:covering}. The Hilbert proofs use the bounded-radius Holmes--Thompson volume growth estimate established in Lemma~\ref{lem:hilb-ball-growth}.

\lemMinkBallCover*

\begin{proof}
Let $n = \NcovMink[D](U, r')$, and fix an $r'$-cover of $U$ by translates $x_i + r' D$, for $i = 1,\dots, n$. It suffices to show that each translate of $r' D$ is coverable by $\CLEQ (r'/r)^d$ translates of $r D$.

By translation and scaling, it suffices to bound $\NcovMink[D](r' D,r)$. Let $S\subseteq r'D$ be a maximal $r$-separated set in the Minkowski metric induced by $D$. Then the translates $\{s + r D \ST s \in S\}$ cover $r' D$, while the sets $\{s + (r/2) D \ST s \in S\}$ are pairwise disjoint. Since $S \subseteq r' D$ and $r \leq r'$, each set $s + (r/2) D$ is contained in $(r' + r/2) D \subseteq (3 r'/2)D$. Comparing Euclidean volumes gives
\[
    |S| \left( \frac{r}{2} \right)^d \vol(D)
        ~ \leq ~ \left( \frac{3 r'}{2} \right)^d \vol(D),
\]
hence $|S| \CLEQ (r'/r)^d$. Refining each of the $n$ sets in this way yields an $r$-covering of $U$ of cardinality $\CLEQ (r'/r)^d n$.
\end{proof}

\lemHilbBallCover*

\begin{proof}
The proof is identical to that of Lemma~\ref{lem:mink-ball-cover}, with Hilbert balls in place of translates of $r D$ and using Lemma~\ref{lem:hilb-ball-growth}(i) in place of the Euclidean volume comparison. Indeed, a maximal $r$-separated subset of a ball $\ballHilb[K](x, r')$ yields a cover by radius-$r$ balls and a disjoint family of radius-$r/2$ balls contained in $\ballHilb[K](x, 3r'/2)$. The assumption $r' \leq \RLocal/2$ ensures that Lemma~\ref{lem:hilb-ball-growth}(i) applies.
\end{proof}

\section{Proofs of Relative Isoperimetric Inequalities} \label{app:rel-iso}

This appendix establishes the localized relative isoperimetric inequalities used in the boundary-covering arguments, both in the Minkowski case (Lemma~\ref{lem:mink-HT-iso}) and in the Hilbert case (Lemma~\ref{lem:hilbert-HT-iso}). The former is obtained by combining a Euclidean relative perimeter estimate for isotropic convex bodies with a transfer to Busemann and then Holmes--Thompson normalization. The latter is proved by reducing the bounded-radius Hilbert geometry to a comparable local Minkowski model via the Macbeath-region sandwich (Lemma~\ref{lem:mac-hilbert}) and the local comparison of Holmes--Thompson densities (Lemma~\ref{lem:local-ht-compare}). Throughout, the key scale-invariant quantity is the ratio $\beta/\mu$, where $\mu$ denotes the relevant volume fraction and $\beta$ the corresponding internal boundary-area fraction.

We first consider the Euclidean case. We say that a convex body $B$ in $\RE^d$ is in \emph{isotropic position}, or simply \emph{isotropic}, if the uniform probability measure on $B$ satisfies
\begin{equation} \label{eq:isotropic-assumptions}
    \int_B x \, dx ~ = ~ 0 
    \qquad\text{and}\qquad
    \frac{1}{\lambda_d(B)} \int_B \inner{x}{u}^2 \, dx ~ = ~ 1, \text{~~for all $u \in S^{d-1}$}. 
\end{equation}

\begin{lemma}[Euclidean relative isoperimetry] \label{lem:euclid-beta-gamma}
Let $B$ be any centrally symmetric, isotropic convex body in $\RE^d$, and let $E \subset \RE^d$ be any convex body such that
\[
    0 ~ < ~ \lambda_d \bigl( E \cap B \bigr) ~ < ~ \lambda_d(B).
\]
Set
\[
    \mu ~ := ~ \frac{\lambda_d(E \cap B)}{\lambda_d(B)} 
        \qquad\text{and}\qquad
    \beta ~ := ~ \frac{\lambda_{d-1}(\bd E\cap \interior(B))}{\lambda_{d-1}(\bd B)}.
\]
Then there exists an absolute constant $\ce > 0$ such that
\[
    \beta 
        ~ \geq ~ \frac{\ce}{d\sqrt{\log d}} \cdot \min\{\mu, 1-\mu\}.
\]
\end{lemma}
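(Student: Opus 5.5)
The plan is to split $\beta$ into a dimension-free isoperimetric (Cheeger) factor and a surface-to-volume factor for $B$, and to bound each separately:
\[
    \beta
        ~ = ~ \frac{\lambda_{d-1}(\bd E\cap\interior(B))}{\lambda_d(B)} \cdot \frac{\lambda_d(B)}{\lambda_{d-1}(\bd B)}.
\]
The first factor is controlled by the Cheeger constant of the uniform measure on $B$. The second factor is controlled by an inradius bound for isotropic bodies. The result is a factor $1/\sqrt{\log d}$ from the first bound and a factor $1/d$ from the second.

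\emph{Step 1 (Cheeger bound).} The uniform probability measure on an isotropic convex body is an isotropic log-concave measure. By the current form of the Kannan--Lov\'asz--Simonovits conjecture (Klartag's bound, improving Chen), its Cheeger constant is at least $\cb/\sqrt{\log d}$ for an absolute constant $\cb>0$. I will apply this to the set $A := E\cap B$. Since $E$ and $B$ are convex, $A$ is a convex body with piecewise regular boundary. Its relative boundary inside $\interior(B)$ is exactly $\bd E\cap\interior(B)$, so the Cheeger inequality reads
\[
    \frac{\lambda_{d-1}(\bd E\cap\interior(B))}{\lambda_d(B)}
        ~ \geq ~ \frac{\cb}{\sqrt{\log d}}\,\min\{\mu,1-\mu\}.
\]
The hypothesis $0<\lambda_d(E\cap B)<\lambda_d(B)$ ensures that this inequality is non-vacuous. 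The only care needed is matching the standard Minkowski-content formulation of the Cheeger constant with $\lambda_{d-1}$ of the relative boundary. For a convex $A$ this is routine, since the outer Minkowski content of $A$ relative to $B$ coincides with the Hausdorff measure of $\bd A\cap\interior(B)$.

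\emph{Step 2 (surface-to-volume bound).} I will use the known fact that an isotropic convex body contains a Euclidean ball $\ccs B_2^d$ centered at its centroid, for an absolute constant $\ccs>0$ (Kannan--Lov\'asz--Simonovits, via the fact that every directional width is bounded below by a constant). Here $B$ is centrally symmetric, so its centroid is $O$, and hence $h_B(u)\geq \ccs$ for all unit vectors $u$. The cone-volume formula
\[
    \lambda_d(B) ~ = ~ \frac{1}{d}\int_{\bd B} h_B(\norm[B](y))\, d\lambda_{d-1}(y)
\]
then gives $\lambda_d(B)\geq (\ccs/d)\,\lambda_{d-1}(\bd B)$.

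\emph{Step 3 (combination).} Multiplying the bounds from Steps 1 and 2 yields the claim with $\ce := \cb\,\ccs$. I expect the main obstacle to be Step 1, specifically invoking the correct Cheeger-constant bound for isotropic log-concave measures and checking that its boundary-measure formulation matches $\lambda_{d-1}(\bd E\cap\interior(B))$. Step 2 is standard once the inradius fact is cited.
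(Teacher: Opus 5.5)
Your proposal is correct and follows the paper's proof essentially step for step. Klartag's $1/\sqrt{\log d}$ Cheeger bound for the uniform measure on $B$ controls $\lambda_{d-1}(\bd E\cap\interior(B))/\lambda_d(B)$, and the cone-volume (divergence) formula with a lower bound on $h_B$ gives $\lambda_d(B)/\lambda_{d-1}(\bd B)\gtrsim 1/d$. The only difference is in Step~2: you cite the Kannan--Lov\'asz--Simonovits inradius bound, whereas the paper obtains $h_B(u)\geq 1$ directly, since for $Y$ uniform on $B$ central symmetry gives $\inner{Y}{u}\in[-h_B(u),h_B(u)]$ and isotropy gives $1=\mathbb{E}\inner{Y}{u}^2\leq h_B(u)^2$.
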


\begin{proof}
Define the uniform probability measure $Q_B$ supported on $B$ and its associated density $q_B$ by
\[
    Q_B(U) := \frac{\lambda_d(U \cap B)}{\lambda_d(B)}
    \quad\text{and}\quad
    q_B(x) := \lambda_d(B)^{-1}\mathbf{1}_B(x),
\]
for all Lebesgue measurable $U \subset \RE^d$. Let $\chi(Q_B)$ denote the Cheeger isoperimetric constant of $Q_B$, that is,
\[
    \chi(Q_B)
        ~ := ~ \inf_{U} \frac{\int_{\bd U} q_B\, d\lambda_{d-1}}{\min\{Q_B(U),\, 1 - Q_B(U)\}},
\]
where the infimum is over all open sets $U \subset \RE^d$ with smooth boundary and $0 < Q_B(U) < 1$. By Klartag's bound for isotropic log-concave measures (see, e.g., \cite[Theorem~1.2]{Kla23}),
\begin{equation} \label{eq:cheeger-klartag}
    \chi(Q_B) ~ \geq ~ \frac{\ce}{\sqrt{\log d}},
\end{equation}
for an absolute constant $\ce > 0$. By approximation, equivalently by the finite-perimeter formulation for log-concave measures, the defining inequality for $\chi(Q_B)$ also applies to convex bodies such as $E$. Since $Q_B(E)=\mu$ and
\[
    \int_{\bd E} q_B\, d\lambda_{d-1}
        ~ = ~ \frac{1}{\lambda_d(B)}\,\lambda_{d-1}(\bd E \cap \interior(B)),
\]
it follows that
\begin{equation} \label{eq:area-lowerbound}
    \lambda_{d-1}(\bd E \cap \interior(B))
        ~ \geq ~ \chi(Q_B)\,\lambda_d(B)\cdot \min\{\mu,1-\mu\}.
\end{equation}

It remains to estimate $\lambda_d(B)/\lambda_{d-1}(\bd B)$. Fix any $u\in S^{d-1}$. Consider a random variable $Y \sim Q_B$. Since $B$ is centrally symmetric, $\inner{Y}{u}$ is supported in $[-h_B(u),h_B(u)]$, and satisfies $\Var\inner{Y}{u} = 1$ by isotropy. It follows that $h_B(u) \geq 1$ for all $u \in S^{d-1}$. Recall that $\norm[B](x)$ denotes the outer unit normal at a smooth point $x\in\bd B$, and is defined at all boundary points except on a set of Hausdorff measure zero. At all such points, we have
\[
    \inner{x}{\norm[B](x)} 
        ~ =    ~ h_B(\norm[B](x)) 
        ~ \geq ~ 1.
\]
Therefore, by the divergence theorem,
\[
    \lambda_d(B)
        ~ =    ~ \frac{1}{d}\int_{\bd B}\inner{x}{\norm[B](x)}\,d\lambda_{d-1}(x)
        ~ \geq ~ \frac{1}{d}\lambda_{d-1}(\bd B),
\]
and hence
\begin{equation} \label{eq:vol-area-ratio}
    \frac{\lambda_d(B)}{\lambda_{d-1}(\bd B)}\ \ge\ \frac{1}{d}.
\end{equation}
Combining Eqs.~\eqref{eq:cheeger-klartag}--\eqref{eq:vol-area-ratio}, we have
\begin{align*}
    \beta
        & ~ = ~ \frac{\lambda_{d-1}(\bd E\cap \interior(B))}{\lambda_{d-1}(\bd B)}
          ~ \geq ~ \frac{\chi(Q_B)\,\lambda_d(B) \cdot \min\{\mu,1-\mu\}}{\lambda_{d-1}(\bd B)} \\
        & ~ \geq ~ \frac{\ce}{\sqrt{\log d}} \left( \frac{\lambda_d(B)}{\lambda_{d-1}(\bd B)} \right) \min\{\mu,1-\mu\}
          ~ \geq ~ \frac{\ce}{d \sqrt{\log d}} \cdot \min\{\mu,1-\mu\},
\end{align*}
as desired.
\end{proof}

\subsection{Relative Isoperimetry in a Minkowski Ball} \label{s:mink-reliso}

In this section, we work with the Busemann volume and surface area induced by the Minkowski norm with a centrally symmetric convex unit ball $D$, denoted $\volBus[D](\cdot)$ and $\areaBus[D](\cdot)$, respectively. These notions are used first to prove the relative isoperimetric inequality of Lemma~\ref{lem:busemann-iso}. We then transfer this bound to the Holmes--Thompson setting and, by a final translation and scaling normalization, obtain Lemma~\ref{lem:mink-HT-iso}. 

Let $D \subset \RE^d$ be a centrally symmetric convex body. We normalize the Busemann volume so that $\volBus[D](D) = \omega_d$. Generally, for every Lebesgue measurable set $U \subset \RE^d$, 
\begin{equation} \label{eq:Bus-volume-density}
    \volBus[D](U)
        ~ := ~ \frac{\omega_d}{\lambda_d(D)} \, \lambda_d(U).
\end{equation}

For a piecewise $C^1$ surface $S$ and $x \in S$, let $T_x$ denote the tangent space at $x$, identified with the corresponding $(d-1)$-dimensional subspace of $\RE^d$. The \emph{Busemann $(d-1)$-area element} is defined by
\begin{equation} \label{eq:Bus-area}
    d\areaBus[D](x)
        ~ := ~ \frac{\omega_{d-1}}{\lambda_{d-1}(D \cap T_x)}\, d\lambda_{d-1}(x).
\end{equation}
When $G$ is a convex body, $\areaBus[D](G)$ denotes $\areaBus[D](\bd G)$.

The next lemma presents the linear invariance of the Busemann volume and surface area under invertible linear transformations (see, e.g., \cite[Axioms~2.2(c)]{HoT79} for the linear invariance of the Busemann volume, and \cite{Sch01a} for the associated Busemann area functional). Recall that $\|\cdot\|_D$ denotes the Minkowski norm with unit ball $D$, as defined in Section~\ref{s:prelim}.

\begin{lemma} \label{lem:linear-invariance}
Given any centrally symmetric convex body $D$ in $\RE^d$ and any invertible linear transformation $L:\RE^d \to \RE^d$, for all $x \in \RE^d$, $\|L x\|_{L(D)} = \|x\|_D$. 
\end{lemma}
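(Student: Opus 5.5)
The identity comes straight from the definition of the gauge, $\|u\|_D = \inf\{\lambda > 0 \ST u \in \lambda D\}$, so no earlier results are needed. Fix $x \in \RE^d$ and an invertible linear map $L$. The plan is to show that the two sets of admissible scalars coincide:
\[
    \{\lambda > 0 \ST Lx \in \lambda L(D)\} ~ = ~ \{\lambda > 0 \ST x \in \lambda D\}.
\]
Since the infima are then taken over the same set, they are equal.

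To prove the set equality, use linearity to write $\lambda L(D) = L(\lambda D)$ for every $\lambda > 0$. Because $L$ is injective, $Lx \in L(\lambda D)$ holds exactly when $x \in \lambda D$. This gives both inclusions at once.

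It is also worth checking that $L(D)$ is a legitimate unit ball, so that $\|\cdot\|_{L(D)}$ is a norm and the statement makes sense. The set $L(D)$ is a convex body: it is compact, convex, and has nonempty interior because $L$ is a homeomorphism. It is centrally symmetric because $-L(D) = L(-D) = L(D)$. It contains the origin in its interior because $L$ maps the open set $\interior(D)$, which contains $O$, onto an open set containing $O$.

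There is no real obstacle here; the only point needing care is the injectivity of $L$, which is guaranteed by invertibility. As a corollary for later use, the maps $L$ and $L^{-1}$ are isometries between $(\RE^d,\|\cdot\|_D)$ and $(\RE^d,\|\cdot\|_{L(D)})$. In particular, they carry Minkowski balls to Minkowski balls: $L(\ballMink[D](z,r)) = \ballMink[L(D)](Lz,r)$. This is presumably how the lemma will feed into the Busemann invariance used next.
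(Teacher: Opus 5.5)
Your argument is correct: since $\lambda L(D) = L(\lambda D)$ and $L$ is injective, the two sets of admissible scalars in the gauge definition coincide, so the infima agree. The paper states this lemma without proof and cites the literature for the resulting Busemann invariance; your argument is the routine check from the definition of the gauge that the paper leaves implicit.
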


A direct consequence is that for any Lebesgue measurable set $U \subset \RE^d$ and any piecewise $C^1$ surface $S$,
\[
    \volBus[L(D)](L(U)) ~ = ~ \volBus[D](U)
    \quad\text{and}\quad
    \areaBus[L(D)](L(S)) ~ = ~ \areaBus[D](S).
\]
Next, we show that, relative to the Busemann measures, balls satisfy the relative isoperimetry.

\begin{lemma}[Busemann relative isoperimetry] \label{lem:busemann-iso}
Let $D$ be a centrally symmetric convex body in $\RE^d$, let $E \subset \RE^d$ be a convex body with $0 < \volBus[D](E \cap D) < \volBus[D](D)$, and let
\[
    \mu ~ := ~ \frac{\volBus[D](E \cap D)}{\volBus[D](D)} 
    \qquad\text{and}\qquad
    \beta ~ := ~ \frac{\areaBus[D](\bd E\cap \interior(D))}{\areaBus[D](D)}.
\]
Then there exists an absolute constant $\cb > 0$ such that
\[
    \beta ~ \geq ~ \frac{\cb}{d\sqrt{\log d}}\cdot \min\{\mu,1-\mu\}.
\]
\end{lemma}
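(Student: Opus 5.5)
The plan is to reduce to Lemma~\ref{lem:euclid-beta-gamma} by an affine normalization, using that $\mu$ and $\beta$ are linearly invariant, and then to show that in isotropic position the Busemann area density is an absolute constant multiple of Lebesgue surface measure. First, choose an invertible linear map $L$ such that $L(D)$ is isotropic in the sense of Eq.~\eqref{eq:isotropic-assumptions}. This is possible because $D$ is centrally symmetric, so its centroid is $O$, and a linear map can bring its covariance matrix to the identity. By Lemma~\ref{lem:linear-invariance} and its consequence for $\volBus$ and $\areaBus$, replacing $(D,E)$ by $(L(D),L(E))$ leaves $\mu$ and $\beta$ unchanged, since $L$ maps $E\cap D$, $\bd E\cap\interior(D)$ and $\bd D$ onto the corresponding sets. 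So we may assume $D$ is isotropic.

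Next we compare measures. By Eq.~\eqref{eq:Bus-volume-density}, $\volBus[D]$ is a constant multiple of $\lambda_d$, so $\mu$ equals the Euclidean fraction $\lambda_d(E\cap D)/\lambda_d(D)$. For areas, the Busemann density at $x$ is $\omega_{d-1}/\lambda_{d-1}(D\cap T_x)$, which depends on the direction of the tangent hyperplane. The key claim is that for isotropic centrally symmetric $D$ and every $\theta\in S^{d-1}$,
\[
    c_1 ~ \leq ~ \frac{\lambda_{d-1}(D\cap\theta^\perp)}{\lambda_d(D)} ~ \leq ~ c_2 ,
\]
with absolute constants $0<c_1\le c_2$. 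To prove it, let $Y\sim Q_D$. The ratio above is exactly the density $f_\theta(0)$ of the marginal $\inner{Y}{\theta}$ at $0$. This marginal is an even log-concave density with variance $1$ (Brunn's principle and isotropy). For even log-concave densities, $f(0)$ is the maximum, and $f(0)^2\Var$ lies between two absolute constants, namely $1/12$ and $1/2$ (Hensley-type bounds). The upper bound follows from the variance being at least that of the uniform density with the same sup. The lower bound uses the log-concave tail decay to bound the variance by $C/f(0)^2$.

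Consequently the Busemann area density on any tangent hyperplane lies between $\omega_{d-1}/(c_2\lambda_d(D))$ and $\omega_{d-1}/(c_1\lambda_d(D))$. Integrating over $\bd E\cap\interior(D)$ and over $\bd D$ then gives
\[
    \beta ~ \geq ~ \frac{c_1}{c_2}\cdot\frac{\lambda_{d-1}(\bd E\cap\interior(D))}{\lambda_{d-1}(\bd D)} .
\]
Applying Lemma~\ref{lem:euclid-beta-gamma} with $B=D$ bounds the right side below by $\frac{c_1}{c_2}\cdot\frac{\ce}{d\sqrt{\log d}}\min\{\mu,1-\mu\}$. Hence $\cb:=\ce c_1/c_2$ works.

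The main obstacle is the uniform two-sided section estimate in the second paragraph. It is the one place where a genuinely new ingredient from log-concavity is needed; everything else is bookkeeping. If a clean citation is preferred, I would invoke the standard fact that hyperplane sections through the centroid of an isotropic body all have volume comparable to $\lambda_d(D)$ times the inverse standard deviation. A minor technical point is that $\bd E$ and $\bd D$ need only be piecewise $C^1$, which is harmless because tangent hyperplanes exist $\lambda_{d-1}$-a.e.
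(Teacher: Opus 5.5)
Your proposal is correct and follows essentially the same route as the paper: normalize $D$ to isotropic position via linear invariance, use the comparability of all central hyperplane sections of an isotropic body to sandwich the Busemann area density between constant multiples of Lebesgue surface measure, and then apply Lemma~\ref{lem:euclid-beta-gamma}. The only difference is that you sketch the section comparability via Hensley's bounds on the even log-concave marginal density at $0$, while the paper cites it in Lemma~\ref{lem:area-compare}; note that in your sketch the labels ``upper'' and ``lower'' are swapped (comparison with the uniform density of the same supremum gives $f(0)^2\Var \ge 1/12$, and log-concave tail decay gives the upper bound), but both inequalities are justified, so the argument still goes through.
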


Note that the $d^{-1}$ dependence in the previous result is optimal up to the $(\log d)^{-1/2}$ factor. Indeed, if $D = [-1, 1]^d$ and $E := \{x_1 \leq 0\}$, then $\mu = 1/2$, while 
\[
    \bd E \cap \interior(D) 
        ~ = ~ \{x_1 = 0\} \cap (-1, 1)^d
\]
is a central section parallel to the facets of $D$. Since the Busemann area density on a flat piece depends only on its normal direction, $\areaBus[D](\bd E \cap \interior(D))$ equals the Busemann area of any facet of $D$. As $\bd D$ consists of $2 d$ facets, it follows that
\[
    \frac{\areaBus[D](\bd E \cap \interior(D))}{\areaBus[D](D)}
        ~ = ~ \frac{1}{2d},
\]
which establishes the $d^{-1}$ dependence.

\smallskip

Before giving the proof, we need to precondition $D$ by transforming it into isotropic position. By Lemma~\ref{lem:linear-invariance}, we may apply a linear transformation that does this. Following this, the proof combines two ingredients: the Euclidean relative isoperimetric bound from the preceding subsection (Lemma~\ref{lem:euclid-beta-gamma}), and a comparison between the Busemann and Euclidean surface area densities, which we present next. 

\begin{lemma} \label{lem:area-compare}
There exists an absolute constant $\ccs \geq 1$ such that, for any centrally symmetric, isotropic convex body $D$ in $\RE^d$, and any piecewise $C^1$ surface $S$,
\[
    m_D \,\lambda_{d-1}(S)
        ~ \leq ~ \areaBus[D](S)
        ~ \leq ~ \ccs \, m_D \,\lambda_{d-1}(S),
\]
where,
\[
    m_D
        ~ := ~ \inf_H \frac{\omega_{d-1}}{\lambda_{d-1}(D \cap H)},
\]
and the infimum is over all $(d-1)$-dimensional linear subspaces $H$ in $\RE^d$.
\end{lemma}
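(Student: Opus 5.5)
The plan is to show that in isotropic position all central hyperplane sections of $D$ have comparable $(d-1)$-volume, up to an absolute constant. The lower bound is immediate from the definitions. By Eq.~\eqref{eq:Bus-area}, at each point $x\in S$ where the tangent space $T_x$ is defined (that is, $\lambda_{d-1}$-almost everywhere), the Busemann density is $\omega_{d-1}/\lambda_{d-1}(D\cap T_x)$. This is at least $m_D$ by the definition of $m_D$ as an infimum over all hyperplanes $H$. Integrating over $S$ gives $m_D\,\lambda_{d-1}(S)\le \areaBus[D](S)$.

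For the upper bound it suffices to show that
\[
    \frac{\sup_H \lambda_{d-1}(D\cap H)}{\inf_H \lambda_{d-1}(D\cap H)} ~\le~ \ccs
\]
for an absolute constant $\ccs$. Given this, the density at every $x$ is at most $\ccs\, m_D$, and integrating finishes the proof. To prove the ratio bound, fix a unit vector $\theta$ and let $Y$ be uniform on $D$. The one-dimensional marginal $\inner{Y}{\theta}$ has density
\[
    f_\theta(t) ~=~ \frac{\lambda_{d-1}\bigl(D\cap(\theta^\perp+t\theta)\bigr)}{\lambda_d(D)}.
\]
In particular $f_\theta(0)=\lambda_{d-1}(D\cap\theta^\perp)/\lambda_d(D)$.

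The key observation is that $f_\theta$ has three properties. It is log-concave, in fact $f_\theta^{1/(d-1)}$ is concave, by Brunn--Minkowski. It is even, because $D$ is centrally symmetric. And it has variance $1$, by the isotropy condition \eqref{eq:isotropic-assumptions}. The classical one-dimensional estimate (Hensley's lemma) states that an even log-concave density $f$ with variance $\sigma^2$ satisfies
\[
    \frac{1}{12} ~\le~ f(0)^2\sigma^2 ~\le~ \frac12 .
\]
The lower constant is attained by the uniform distribution and the upper by the symmetric exponential. Applying this with $\sigma=1$ gives $f_\theta(0)\in[1/\sqrt{12},\,1/\sqrt2]$ for every $\theta$. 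Therefore the central sections satisfy $\lambda_{d-1}(D\cap\theta^\perp)\in[\lambda_d(D)/\sqrt{12},\,\lambda_d(D)/\sqrt2]$, and we may take $\ccs=\sqrt6$.

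The main step to justify is the one-dimensional inequality; the rest is bookkeeping. If it cannot simply be cited, I would prove it directly. For the upper bound on $f(0)$: since $f$ is even and log-concave, its maximum is $f(0)$, so $f\le f(0)$ everywhere. A density bounded by $f(0)$ has variance at least that of the uniform density on $[-1/(2f(0)),\,1/(2f(0))]$, which is $1/(12f(0)^2)$; this gives $f(0)^2\sigma^2\ge 1/12$. For the lower bound on $f(0)$: log-concavity forces $f$ to decay at least exponentially, and comparison with the two-sided exponential having the same value at $0$ gives $\sigma^2\le 2/(2f(0))^2$ in the normalized form, i.e.\ $f(0)^2\sigma^2\le 1/2$. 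Only the existence of absolute constants is needed, so crude versions of both estimates suffice.
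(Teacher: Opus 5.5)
Your proof is correct and follows the paper's argument. The lower bound is immediate from the definition of $m_D$, and the upper bound reduces to the comparability of all central hyperplane sections of an isotropic, centrally symmetric body. The paper simply cites that comparability as standard; you derive it from Hensley's inequality $\frac{1}{12}\le f_\theta(0)^2\le\frac12$ for the even, log-concave, unit-variance marginals, which also gives the explicit constant $\ccs=\sqrt6$.
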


\begin{proof} 
By definition,
\[
    \areaBus[D](S)
        ~ = ~ \int_S \frac{\omega_{d-1}}{\lambda_{d-1}(D \cap T_x)} \, d\lambda_{d-1}(x).
\]
Since $\omega_{d-1}/\lambda_{d-1}(D \cap T_x) \geq m_D$, for all $x \in S$ excluding a set of Hausdorff measure zero, the lower bound is immediate.

For the upper bound, it is standard that if $D$ is isotropic, then all central hyperplane sections of $D$ are comparable up to an absolute constant $\ccs$ (see, e.g., \cite[Section~3]{BGV14}). Hence
\[
    \frac{\omega_{d-1}}{\lambda_{d-1}(D\cap H)}
        ~ \leq ~ \ccs\, m_D
\]
for every $(d-1)$-dimensional linear subspace $H\subset \RE^d$. Applying this with $H = T_x$ and integrating over $S$ gives
\[
    \areaBus[D](S)
        ~ \leq ~ \ccs\, m_D\, \lambda_{d-1}(S).   \qedhere
\]
\end{proof}

\smallskip

\begin{proof}[Proof of Lemma~\ref{lem:busemann-iso}]
By Lemma~\ref{lem:linear-invariance}, we may assume that $D$ is in isotropic position. By Eq.~\eqref{eq:Bus-volume-density}, we have
\[
    \mu
        ~ = ~ \frac{\volBus[D](E \cap D)}{\volBus[D](D)} 
        ~ = ~ \frac{\lambda_d(E \cap D)}{\lambda_d(D)}. 
\]
Applying the lower bound of Lemma~\ref{lem:area-compare} to $S := \bd E \cap \interior(D)$ and the upper bound to $S := \bd D$, we obtain
\[
    \beta
        ~ =    ~ \frac{\areaBus[D](\bd E\cap\interior(D))}{\areaBus[D](D)}
        ~ \geq ~ \frac{1}{\ccs} \cdot \frac{\lambda_{d-1}(\bd E \cap \interior(D))}{\lambda_{d-1}(\bd D)}.
\]
By Lemma~\ref{lem:euclid-beta-gamma} applied to $D$ and $E$, we obtain
\[
    \frac{\lambda_{d-1}(\bd E\cap\interior(D))}{\lambda_{d-1}(\bd D)}
        ~ \geq ~ \frac{\ce}{d \sqrt{\log d}} \cdot \min\{\mu,1-\mu\}.
\]
Combining the last two inequalities and setting $\cb := \ce/\ccs$ completes the proof.
\end{proof}

We now convert the Busemann bound into the Holmes--Thompson (Minkowski) normalization used elsewhere in the paper. The following comparison is standard in the theory of volumes and areas in normed, more generally Finsler, spaces (see, e.g., \cite{AlT04} and the references therein). We include a short proof for completeness.

\begin{lemma}[Busemann vs.\ Holmes--Thompson (Minkowski)]\label{lem:BH-HT-compare-mink}
Let $D$ be a centrally symmetric convex body in $\RE^d$, viewed as the unit ball of the Minkowski norm. 
Then for any Lebesgue measurable set $U \subset \RE^d$ and any piecewise $C^1$ surface $S\subset \RE^d$,
\[
    \volMink[D](U) ~ \CEQ ~ \volBus[D](U),
    \qquad\text{and}\qquad
    \areaMink[D](S) ~ \CEQ ~ \areaBus[D](S).
\]
\end{lemma}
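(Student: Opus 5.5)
Both comparisons reduce to comparing constant densities. The Holmes--Thompson and Busemann measures of a Minkowski space are constant multiples of Lebesgue measure, with a separate constant for each tangent hyperplane direction. So it suffices to compare these multipliers pointwise.

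For volume, by definition
\[
\volMink[D](U) = \frac{\lambda_d(\polar{D})}{\omega_d}\,\lambda_d(U)
\quad\text{and}\quad
\volBus[D](U) = \frac{\omega_d}{\lambda_d(D)}\,\lambda_d(U),
\]
so the ratio of the two is $\lambda_d(D)\lambda_d(\polar{D})/\omega_d^2$. By Blaschke--Santal\'o, this ratio is at most $1$. By the reverse Santal\'o inequality of Bourgain--Milman, it is at least $c^d$ for an absolute $c>0$, since $\lambda_d(D)\lambda_d(\polar{D}) \geq c^d\omega_d^2$. Hence the two volumes agree up to a factor $c^{\pm d}$, which is $\CEQ$.

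For area, fix a point $x\in S$ at which the tangent hyperplane $T_x$ exists; this holds $\lambda_{d-1}$-a.e. The Holmes--Thompson density is $\lambda_{d-1}\bigl(\polarIn[T_x]{(D\cap T_x)}\bigr)/\omega_{d-1}$; this is the section form of the definition, equivalent to the projection form by Lemma~\ref{lem:slice}. The Busemann density is $\omega_{d-1}/\lambda_{d-1}(D\cap T_x)$. Their ratio is
\[
\frac{\lambda_{d-1}(D\cap T_x)\,\lambda_{d-1}\bigl(\polarIn[T_x]{(D\cap T_x)}\bigr)}{\omega_{d-1}^2},
\]
which is the normalized volume product of the centrally symmetric $(d-1)$-dimensional body $D\cap T_x$ inside $T_x$. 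Applying Blaschke--Santal\'o and Bourgain--Milman in dimension $d-1$ bounds this ratio between $c^{d-1}$ and $1$, uniformly in $x$. Integrating over $S$ then gives $\areaMink[D](S)\CEQ\areaBus[D](S)$.

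The only point requiring care is that the section $D\cap T_x$ is centrally symmetric with center at the origin. This is exactly what the Blaschke--Santal\'o bound needs: it holds for bodies whose Santal\'o point is at the origin, which is automatic under central symmetry. I also need the polar to be taken within $T_x$, which the definition of $\polarIn[T_x]{(\cdot)}$ ensures.

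The main obstacle is therefore only the invocation of the reverse Santal\'o inequality. I treat it as a known result, with an absolute constant independent of dimension; it was already cited in the paper for Lemma~\ref{lem:mink-ball-growth}. The rest of the argument is elementary.
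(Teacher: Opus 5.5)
Your proposal is correct and follows essentially the same route as the paper. For volume you reduce to the normalized volume product of $D$, and for area to that of the central section $D \cap T_x$ inside $T_x$; you then bound both above and below by Blaschke--Santal\'o and the Bourgain--Milman reverse Santal\'o inequality, in dimensions $d$ and $d-1$ respectively.
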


\begin{proof}
In a Minkowski space with unit ball $D$, both $\volBus[D]$ and $\volMink[D]$ are constant
multiples of the corresponding Lebesgue measure, or more precisely,
\[
    \volBus[D](U) ~ = ~ \frac{\omega_d}{\lambda_d(D)}\,\lambda_d(U),
    \quad\text{and}\quad
    \volMink[D](U) ~ = ~ \frac{\lambda_d(\polar{D})}{\omega_d}\,\lambda_d(U),
\]
and therefore
\[
    \frac{\volMink[D](U)}{\volBus[D](U)}
        ~ = ~ \frac{\lambda_d(D)\lambda_d(\polar{D})}{\omega_d^2}.
\]
The right-hand side is the normalized volume product of $D$. By the Blaschke--Santal\'o inequality together with a reverse Santal\'o inequality, 
\[
    \frac{\lambda_d(D)\lambda_d(\polar{D})}{\omega_d^2} ~ \CEQ ~ 1, 
\]
and the volume comparison follows. 

For the area comparison, fix any smooth point $x \in S$. Then $D \cap T_x$ is the unit ball of the induced norm on $T_x$ and the ratio of the Holmes--Thompson and Busemann $(d-1)$-area densities at $x$
equals the normalized volume product of $D \cap T_x$ in dimension $d-1$. Hence, this ratio is $\CEQ 1$, and integrating over $S$ gives $\areaMink[D](S) \CEQ \areaBus[D](S)$. 
\end{proof}

{\lemMinkHTiso*}

\begin{proof}
Let $T(y):=(y-z)/r$ and set $\widetilde{E}:=T(E)$. Then $T(B)=D$, and by translation invariance together with the homogeneity of Minkowski Holmes--Thompson volume and area in the fixed norm with unit ball $D$, the quantities $\mu$ and $\beta$ are unchanged when $(E, B)$ is replaced by $(\widetilde{E}, D)$. 

Thus, it suffices to treat the case $B=D$. Since $\volBus[D](\cdot)$ and $\volMink[D](\cdot)$ are both constant multiples of Lebesgue measure in the Minkowski space with unit ball $D$, their normalized volume fractions coincide, and hence 
\[
    \mu ~ = ~ \frac{\volBus[D]\bigl( \widetilde{E} \cap D \bigr)}{\volBus[D](D)}. 
\]
Writing
\[
    \beta_{\mathrm{Bus}} ~ := ~ \frac{\areaBus[D]\bigl(\bd \widetilde{E} \cap \interior(D)\bigr)}{\areaBus[D](D)},
\]
Lemma~\ref{lem:BH-HT-compare-mink} implies that $\beta \CGEQ \beta_{\mathrm{Bus}}$. Applying Lemma~\ref{lem:busemann-iso} to $(\widetilde{E},D)$ yields
\[
    \beta
        ~ \CGEQ ~ \beta_{\mathrm{Bus}}
        ~ \geq  ~ \frac{c_b}{d\sqrt{\log d}} \min\{\mu, 1 - \mu\}
        ~ \CGEQ ~ \min\{\mu, 1 - \mu\},
\]
where the last step absorbs the factor $\bigl( d \sqrt{\log d} \bigr)^{-1}$ into the base constant. 
\end{proof}

\subsection{From Minkowski to Hilbert Isoperimetry} \label{s:hilbert-reliso}

We now derive a relative isoperimetric inequality for Holmes--Thompson area in Hilbert geometry, localized to a Hilbert ball $B := \ballHilb[K](x,r)$ of radius $0 < r \leq 1$. The proof reduces Hilbert Holmes--Thompson isoperimetry on $B$ to Minkowski Holmes--Thompson isoperimetry on a suitable Macbeath region, via the local comparison estimate of Lemma~\ref{lem:local-ht-compare}.

For our applications, it suffices to establish a localized, anchored relative isoperimetric inequality in Hilbert geometry. In the boundary covering arguments, each application takes place inside $B$ and involves a convex cut $E \subseteq \interior(K)$ whose boundary passes through the center, that is, with $x \in \bd E$. This is precisely the setting of the next lemma. Although a more symmetric statement of the form $\beta \CGEQ \min\{\mu,1-\mu\}$ is natural in the Minkowski setting, we do not pursue an unanchored analog in the Hilbert case, since it is not needed for our applications and our argument relies essentially on the anchoring at $x$. 

\lemHilbertHTiso*

\begin{proof}
Let
\[
    A := M_K(x,1)-x, \qquad
    B_0 := M_K(x,\sigma r) = x + \sigma r\,A, \qquad
    B_1 := M_K(x,\tau r) = x + \tau r\,A,
\]
where $\sigma$ and $\tau$ are the constants from Lemma~\ref{lem:mac-hilbert}. Clearly, $B_0 \subseteq B \subseteq B_1$. Set $t := \tau/\sigma$, so that $B_1 = x + t(B_0 - x)$. Recall from Section~\ref{s:covering} that the global constant $\RLocal$ was chosen so that $\RLocal \geq 8$. Define
\[
    \widehat B ~ := ~ \ballHilb[K](x,\RLocal).
\]
Since $B=\ballHilb[K](x,r)$ with $0<r\leq 1$, we have $B \subseteq \widehat B$. Define
\[
    \mu_A ~ := ~ \frac{\volMink[A](E \cap B)}{\volMink[A](B)}
    \qquad\text{and}\qquad
    \mu_{A,0} ~ := ~ \frac{\volMink[A](E \cap B_0)}{\volMink[A](B_0)}.
\]
By Lemma~\ref{lem:local-ht-compare}(i), applied on $\widehat B$, and absorbing the resulting absolute constants into the notations $\CLEQ$, $\CGEQ$, and $\CEQ$, we have
\[
    \mu ~ \CEQ ~ \mu_A.
\]
Moreover, since $E \cap B_0 \subseteq E \cap B$ and $B_0 \subseteq B \subseteq B_1 = x + t(B_0 - x)$, translation invariance and homogeneity of $\volMink[A]$ implies that
\[
    \mu_{A,0} ~ \CGEQ ~ \mu_A.
\]
Let $H$ be a supporting hyperplane to $E$ at $x$, and let $H^{-}$ be the closed halfspace bounded by $H$ that contains $E$. Then $E \cap B_0 \subseteq B_0 \cap H^{-}$. Since $B_0$ is centrally symmetric about $x$ and $H$ passes through $x$, the hyperplane $H$ bisects $B_0$, and hence $\mu_{A,0}\leq 1/2$. Applying Lemma~\ref{lem:mink-HT-iso} to the Minkowski ball $B_0$, we obtain
\[
    \frac{\areaMink[A](\bd E \cap \interior(B_0))}{\areaMink[A](B_0)}
        ~ \CGEQ ~ \mu_{A,0}.
\]
Since $B_0 \subseteq B$,
\[
    \areaMink[A](\bd E \cap \interior(B))
        ~ \CGEQ ~ \areaMink[A](\bd E \cap \interior(B_0)).
\]
Also, from $B \subseteq B_1 = x + t(B_0 - x)$, together with monotonicity (Lemma~\ref{lem:HT-area-monotone}) and homogeneity, we have $\areaMink[A](B) \CLEQ \areaMink[A](B_0)$, and hence,
\[
    \frac{\areaMink[A](\bd E \cap \interior(B))}{\areaMink[A](B)}
        ~ \CGEQ ~ \mu_{A,0}.
\]
Finally, by applying Lemma~\ref{lem:local-ht-compare}(ii) on $\widehat B$, we have
\[
    \areaHilb[K](\bd E \cap \interior(B)) ~ \CEQ ~ \areaMink[A](\bd E \cap \interior(B))
    \qquad\text{and}\qquad
    \areaHilb[K](B) ~ \CEQ ~ \areaMink[A](B),
\]
and therefore
\[
    \beta
        ~ =     ~ \frac{\areaHilb[K](\bd E \cap \interior(B))}{\areaHilb[K](B)}
        ~ \CGEQ ~ \mu_{A,0}
        ~ \CGEQ ~ \mu_A
        ~ \CGEQ ~ \mu,
\]
as desired.
\end{proof}

\section{Duality of Funk Measures} \label{app:funk-polarity}

In this appendix, we present proofs of the Funk Holmes--Thompson polarity identities stated in Lemma~\ref{lem:funk-HT-polarity}. These identities were proved by Faifman~\cite{Fai24}. We include the simple volume proof for completeness, and we present our area proof as an elementary alternative to Faifman's proof. (Yet another proof based on a cone-based formulation of Funk volumes was presented in~\cite{ArM26arxiv}.)

We begin by presenting several useful definitions and technical results on the effects of linear and projective transformations on measures. Given an invertible linear operator $L$ on $\RE^d$, let $L^T$ denote its transpose and $\det(L)$ its determinant. The \emph{cofactor} of $L$ is defined by
\[
    \cofactor(L) ~ := ~ \det(L) \, L^{-T}.
\]
Given a map $f: \RE^d \to \RE^d$ that is differentiable at $x$, its derivative at $x$ is the linear transformation $Df(x)$ such that
\[
    f(x+h) ~ = ~ f(x) + Df(x)h + o(\|h\|),
    \quad\text{as $h \to 0$}.
\]

For an oriented $C^1$ hypersurface patch $S\subset\RE^d$ and $p \in S$, let $\norm[S](p)$ be a measurable unit normal on $S$ at point $p$. Its oriented area vector is
\[
    \mathcal A(S) ~ := ~ \int_S \norm[S](p)\,d\lambda_{d-1}(p).
\]

The next lemma states the standard transformation rules for volume and oriented area under an invertible linear transformation (see, for example, \cite[Section~3.2]{AnG20}).

\begin{lemma}\label{lem:volume-area-transform}
Let $L: \RE^d \to \RE^d$ be any invertible linear transformation. Then:
\begin{enumerate}
\item[$(i)$] For any Lebesgue measurable set $U\subset\RE^d$, $\lambda_d(L(U)) = |\det(L)| \, \lambda_d(U)$, and in particular, the volume element transforms as
\[
    d\lambda_d(Lu) ~ = ~ |\det(L)| \, d\lambda_d(u).
\]

\item[$(ii)$] For any oriented $C^1$ surface patch $S\subset\RE^d$, $\mathcal A(L(S)) = \cofactor(L)\,(\mathcal A(S))$, and in particular, at each smooth point $p \in S$, the oriented area element transforms as
\[
    \norm[L(S)](L p) \, d\lambda_{d-1}(L p)
    ~ = ~ \cofactor(L) \bigl( \norm[S](p) \, d\lambda_{d-1}(p) \bigr).
\]
\end{enumerate}
\end{lemma}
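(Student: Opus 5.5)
The statement to prove is Lemma~\ref{lem:volume-area-transform}, and I will treat the two parts separately. Part~(i) is the classical change-of-variables formula for a linear map. I will verify it first on axis-parallel boxes by writing $L$ as a product of elementary matrices (row scalings, shears, and permutations), for each of which the volume scaling by $|\det|$ is immediate from Fubini. Multiplicativity of the determinant then gives the identity for general $L$ on boxes. It extends to all Lebesgue measurable $U$ because both sides are translation-invariant Borel measures agreeing on boxes, and $L$ preserves null sets. The statement about the volume element is just the differential form of the same identity.

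For part~(ii), I will work pointwise at a smooth point $p\in S$. Take a local $C^1$ parametrization $\phi:W\subset\RE^{d-1}\to S$ with partial derivatives $\phi_1,\dots,\phi_{d-1}$. The oriented area element is the generalized cross product $N(\phi_1,\dots,\phi_{d-1})$, namely the unique vector with
\[
\inner{N}{w}=\det[\phi_1,\dots,\phi_{d-1},w]
\quad\text{for all } w\in\RE^d.
\]
Then $\norm[S](p)\,d\lambda_{d-1}(p)=N\,du$, where $du$ is Lebesgue measure on $W$ (orientation fixed so that the signs agree). The map $L\circ\phi$ parametrizes $L(S)$ with partials $L\phi_i$, so the task reduces to the algebraic identity
\[
N(L\phi_1,\dots,L\phi_{d-1})=\cofactor(L)\,N(\phi_1,\dots,\phi_{d-1}).
\]
To prove it, fix any $w$ and write $w=Lv$. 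Then
\[
\inner{N(L\phi)}{Lv}=\det[L\phi_1,\dots,L\phi_{d-1},Lv]=\det(L)\inner{N(\phi)}{v}=\inner{\det(L)L^{-T}N(\phi)}{Lv}.
\]
Since $Lv$ ranges over all of $\RE^d$, the identity follows.

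Integrating over $W$ then gives $\mathcal A(L(S))=\cofactor(L)\,\mathcal A(S)$ by linearity of the integral, and a partition of unity handles patches not covered by a single chart. The identification $\|N\|\,du=d\lambda_{d-1}$ is the standard area formula, which I will cite rather than reprove.

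The only delicate step is orientation bookkeeping. If $\det L<0$, the pushed-forward normal $\norm[L(S)]$ must be taken to be the one induced via $L\circ\phi$; with that convention the formula holds with the signed $\det(L)$ built into $\cofactor(L)$, and I will state this convention explicitly.
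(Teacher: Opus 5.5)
The paper gives no proof of this lemma. It records it as the standard transformation rule for volume and oriented area and cites Section~3.2 of \cite{AnG20}. Your argument is a correct, self-contained derivation of that rule. For part~(ii), you characterize the oriented area element as the generalized cross product via $\inner{N}{w}=\det[\phi_1,\dots,\phi_{d-1},w]$ and test against $w=Lv$. This gives exactly $N(L\phi_1,\dots,L\phi_{d-1})=\det(L)\,L^{-T}N(\phi_1,\dots,\phi_{d-1})=\cofactor(L)\,N(\phi_1,\dots,\phi_{d-1})$. Together with $\|N\|\,du=d\lambda_{d-1}$ and integration over charts, this yields the statement about $\mathcal A$. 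What your route buys over the citation is that it makes explicit an orientation convention the paper's statement leaves implicit.

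Two small remarks. In part~(i), the composition step needs the elementary-matrix identity on sets other than boxes, because the image of a box under a product of elementary matrices is a parallelepiped. So you must either extend each elementary identity from boxes to all measurable sets \emph{before} composing, or observe that the Fubini argument already works for arbitrary measurable sets. That observation is the cleaner fix: a shear translates each one-dimensional fiber and a coordinate scaling dilates it. Once composed, no separate uniqueness-of-measures step is needed.

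In part~(ii), your parametrization-induced convention is the right reading of the signed formula. If $\norm[L(S)]$ were instead the transported normal in direction $L^{-T}\norm[S](p)$, as for outward normals of $L(K)$, the identity would hold with $|\det(L)|$ in place of $\det(L)$. This distinction is harmless for the paper. The lemma is used only inside $\bigl|\inner{\cofactor(J_{x,y})\,\norm[\polar{K}](y)}{\norm[G](x)}\bigr|$, and in any case $\det(J_{x,y})=(1-\inner{x}{y})^{-(d+1)}>0$ there.
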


A transformation $\Phi$ on $\RE^d$ is called projective if it is of the form
\[
    \Phi(x) ~ = ~ \frac{M x + b}{\inner{c}{x} + \gamma},
\]
defined for those $x \in \RE^d$ satisfying $\inner{c}{x} + \gamma \neq 0$, where $M: \RE^d \to \RE^d$ is linear, $b,c \in \RE^d$, and $\gamma \in \RE$. Associated with $\Phi$ is the linear transformation $\widetilde{\Phi}: \RE^{d+1} \to \RE^{d+1}$ given by 
\[
    \widetilde{\Phi}(x,t) ~ := ~ (M x + t b, \, \inner{c}{x} + \gamma t). 
\]
We say that $\Phi$ is invertible, or nonsingular, if $\widetilde{\Phi}$ is nonsingular.

We call $\Phi$ \emph{admissible} for a convex body $K$ if 
\[
    \inner{c}{x} + \gamma \neq 0 \qquad \text{for all } x \in K. 
\]
Equivalently, $\Phi$ is well defined on $K$, that is, no point of $K$ is mapped to infinity. Since $K$ is connected, this is equivalent to requiring 
\[
    \inner{c}{x} + \gamma > 0 \qquad \text{for all } x \in K, 
\]
possibly after replacing $(M,b,c,\gamma)$ by $-(M,b,c,\gamma)$. (Note that invertibility and admissibility are logically independent in the sense that invertibility is a global projective condition, whereas admissibility is a domain condition relative to $K$.)

The following lemma describes how polarity behaves under admissible projective transformations, and in particular under translations. A proof of~(i) appears in~\cite{McS71}, and~(ii) follows by specializing to translations (see~\cite[Lemma~2.1]{FVW23}).

\begin{lemma}\label{lem:trans-polar}
Let $K$ be a convex body with the origin $O$ in its interior. Then:
\begin{enumerate}
\item[$(i)$] For any invertible projective transformation $\Phi$ admissible for $K$ with $O \in \interior(\Phi K)$, there exists an invertible projective transformation $\Phi'$ admissible for $\polar{K}$ such that $\polar{(\Phi K)} = \Phi'(\polar{K})$.

\item[$(ii)$] For $x \in \interior(K)$, let $P_x$ be the projective transformation defined on the open halfspace
\[
    \Dom(P_x) ~ := ~ \{y \in \RE^d \ST \inner{x}{y} < 1\}.
\]
by $P_x(y) := y/(1-\inner{x}{y})$. Then $P_x$ is admissible for $\polar{K}$ and $\polar{(K-x)} = P_x(\polar{K})$ (see Figure~\ref{f:polar-projective}).
\end{enumerate}
\end{lemma}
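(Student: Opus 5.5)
We prove part~(ii) by direct computation and obtain part~(i) by lifting to cones in $\RE^{d+1}$, where projective maps act linearly and polarity becomes cone duality.

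\emph{Part (ii).} First, admissibility. Since $x \in \interior(K)$, we have $\sup_{w\in\polar{K}}\inner{x}{w} = h_{\polar{K}}(x) = \|x\|_K < 1$. Hence $1-\inner{x}{w} > 0$ on $\polar{K}$, so $\polar{K}\subset\Dom(P_x)$ and $P_x$ is admissible.

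Next, the inclusion $P_x(\polar{K})\subseteq\polar{(K-x)}$. For $w\in\polar{K}$ put $y = P_x(w)$. For every $k\in K$,
\[
    \inner{k-x}{y} ~ = ~ \frac{\inner{k}{w}-\inner{x}{w}}{1-\inner{x}{w}},
\]
so $\inner{k-x}{y}\le 1$ holds exactly when $\inner{k}{w}\le 1$. This gives $y \in \polar{(K-x)}$.

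Finally, the reverse inclusion, via the candidate inverse $Q(y) := y/(1+\inner{x}{y})$. Let $y\in\polar{(K-x)}$ with $y\ne 0$. Because $O\in\interior(K)$, the point $k = \delta y/\|y\|$ lies in $K$ for small $\delta>0$. The polar condition at this $k$ gives $-\inner{x}{y}+\delta\|y\|\le 1$, so $1+\inner{x}{y}>0$. Thus $w := Q(y)$ is well defined. A direct check gives $1-\inner{x}{w} = 1/(1+\inner{x}{y})$ and $P_x(w)=y$. The same displayed equivalence then shows $w\in\polar{K}$, which completes (ii).

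\emph{Part (i).} To $K$ associate the cone $\widehat K := \{(tk,t) : k\in K,\ t\ge 0\}\subset\RE^{d+1}$. To $\polar{K}$ associate the cone $\{(y,s) : \inner{y}{k}\le s \text{ for all } k\in K\}$. This second cone is exactly the dual cone of $\widehat K$ under the indefinite pairing $\langle\!\langle (x,t),(y,s)\rangle\!\rangle := \inner{x}{y} - ts$, that is, the set of $(y,s)$ with $\langle\!\langle\cdot,(y,s)\rangle\!\rangle \le 0$ on $\widehat K$.

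Admissibility of $\Phi$, normalized so that $\inner{c}{x}+\gamma>0$ on $K$, is precisely the statement that the linear map $\widetilde\Phi$ sends $\widehat K$ onto the cone over $\Phi K$ with positive last coordinate. Write $J := \mathrm{diag}(I_d,-1)$, so that $\langle\!\langle u,v\rangle\!\rangle = u^T J v$. The dual cone of $\widetilde\Phi(\widehat K)$ is then the image of the dual cone of $\widehat K$ under the linear map $L := J\widetilde\Phi^{-T}J$. We take $\Phi'$ to be the projective map whose lift is $L$. Dehomogenizing shows $\polar{(\Phi K)} = \Phi'(\polar{K})$, and $\Phi'$ is invertible because $L$ is.

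\emph{Main obstacle.} The delicate step is showing that $\Phi'$ is admissible for $\polar{K}$. Concretely, the last coordinate of $L(y,1)$ must be nonzero, with constant sign, for all $y\in\polar{K}$. I would derive this from the hypothesis $O\in\interior(\Phi K)$: it makes $\polar{(\Phi K)}$ a bounded convex body, so the dual cone of $\widetilde\Phi(\widehat K)$ is pointed and lies in the open half-space $\{s>0\}$ apart from its apex. Since $L$ maps the dual cone of $\widehat K$ bijectively onto this cone, every nonzero point $L(y,1)$ with $y\in\polar{K}$ has positive last coordinate. Along the way, the sign conventions coming from $J$ and from the normalization of $(M,b,c,\gamma)$ have to be tracked carefully.
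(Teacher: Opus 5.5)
Your proof is correct and complete. It differs from the paper mainly in being self-contained: the paper gives no argument of its own, citing the literature for (i) and obtaining (ii) by specializing (i) to the translation $z \mapsto z - x$.

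You instead prove (ii) directly. The two inclusions follow from the equivalence $\inner{k-x}{P_x(w)} \le 1 \iff \inner{k}{w} \le 1$ when $1-\inner{x}{w}>0$, together with the explicit inverse $y \mapsto y/(1+\inner{x}{y})$. This is elementary, and it is all that the Funk-duality proof in the appendix actually uses. You then prove (i) independently via cone duality, with the explicit lift $L = J\widetilde\Phi^{-T}J$.

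Your formula also recovers the paper's route. For $\widetilde\Phi(z,t) = (z - tx,\, t)$ one computes $L(y,s) = (y,\, s - \inner{x}{y})$. Hence $\Phi' = P_x$, and (ii) drops out of (i) exactly as the paper indicates.

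The step you flag as the main obstacle is already settled by what you wrote, and no further sign bookkeeping is needed. Using only $O \in \interior(\Phi K)$, the dual of the cone over $\Phi K$ is exactly $\{(sz,s) : z \in \polar{(\Phi K)},\ s \ge 0\}$:
\begin{itemize}
\item $O \in \Phi K$ rules out $s<0$;
\item $O \in \interior(\Phi K)$ forces any $(y,0)$ with $\inner{y}{\cdot} \le 0$ on $\Phi K$ to have $y=0$.
\end{itemize}
So the nonzero vector $L(y,1)$, which lies in this cone, has positive last coordinate. The normalization $\inner{c}{\cdot}+\gamma>0$ on $K$ is precisely what makes $\widetilde\Phi(\widehat K)$ the cone over $\Phi K$ rather than its negative.

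A minor point: the case $y=0$ in your reverse inclusion for (ii) should be mentioned, but it is trivial since $P_x(0)=0$.
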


\begin{figure}[htbp]
  \centerline{\includegraphics[scale=0.40]{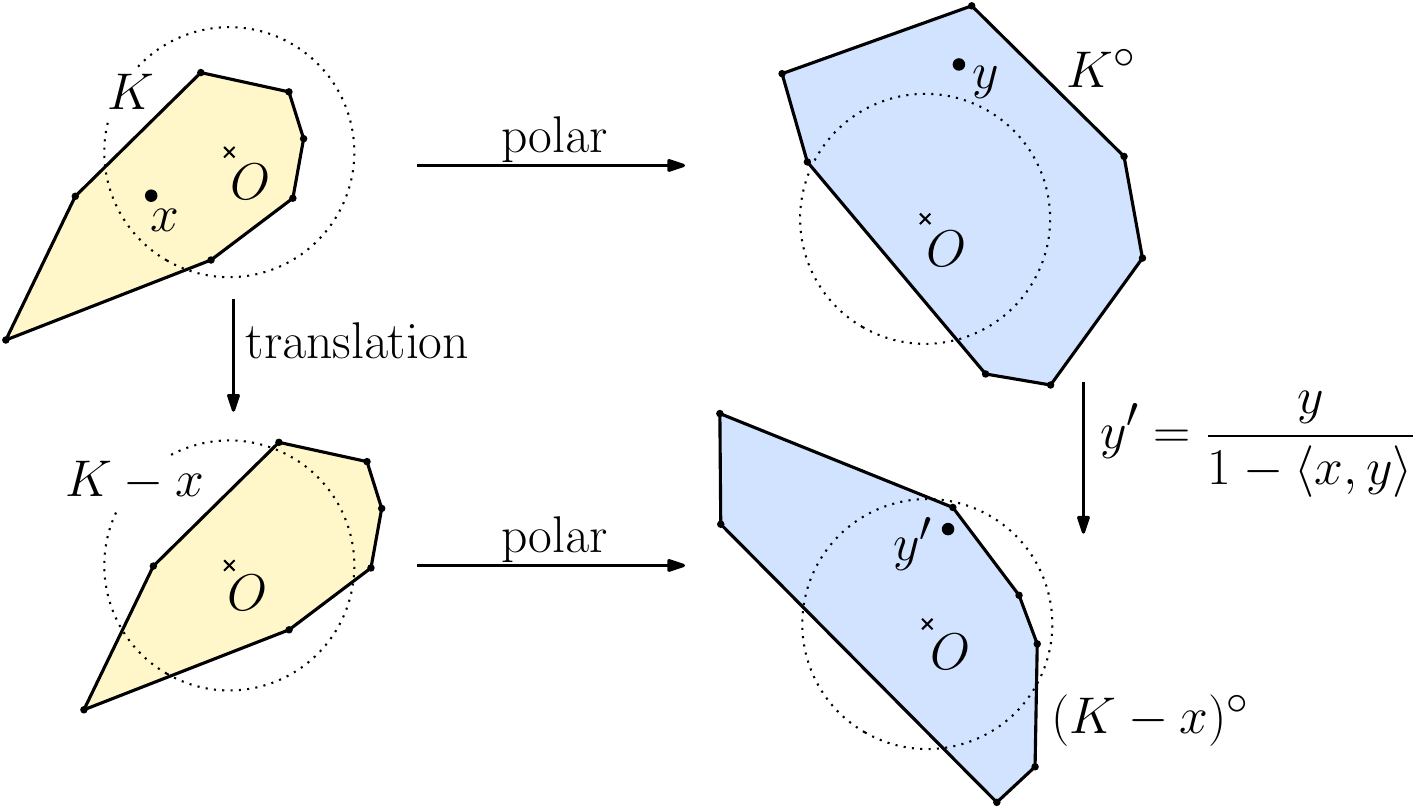}}
  \caption{Effect of translation on the polar.}
  \label{f:polar-projective}
\end{figure}

To evaluate the Funk Holmes--Thompson volume and area densities at
$x \in \interior(K)$, we need to understand how the polar body $\polar{(K-x)}$ depends on $x$. By Lemma~\ref{lem:trans-polar}(ii), this body is the image of $\polar{K}$ under the projective transformation $P_x$.

For $y\in\Dom(P_x)$, let $J_{x,y}$ denote the derivative of $P_x$ at $y$. A direct calculation gives
\[
    J_{x,y}(h)
        ~ = ~ \frac{(1-\inner{x}{y}) h + \inner{x}{h} y}{(1 - \inner{x}{y})^2},
\]
or equivalently,
\[
    J_{x,y}
        ~ = ~ (1-\inner{x}{y})^{-2} \bigl( (1-\inner{x}{y}) I + y \otimes x \bigr),
\]
where $(y \otimes x)(h) := \inner{x}{h} y$.

\begin{lemma}\label{lem:jacobian}
For any $x, y \in \RE^d$ such that $\inner{x}{y} < 1$:
\begin{enumerate}
\item[$(i)$] $J_{y,x} = J_{x,y}^T$.
\item[$(ii)$] $\det(J_{y,x}) = \det(J_{x,y})$.
\end{enumerate}
\end{lemma}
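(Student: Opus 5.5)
The plan is a direct computation from the explicit matrix form of the derivative,
\[
    J_{x,y} ~ = ~ (1-\inner{x}{y})^{-2} \bigl( (1-\inner{x}{y}) I + y \otimes x \bigr),
\]
using only two facts: the inner product is symmetric, and the transpose of a rank-one map $y \otimes x$ is $x \otimes y$. The hypothesis $\inner{x}{y} < 1$ is symmetric in $x$ and $y$. So $y \in \Dom(P_x)$ and $x \in \Dom(P_y)$ both hold, and both $J_{x,y}$ and $J_{y,x}$ are defined.

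For part~(i), I first check the rank-one transpose identity. For any $h,k \in \RE^d$,
\[
    \inner{(y\otimes x)h}{k} ~ = ~ \inner{x}{h}\inner{y}{k} ~ = ~ \inner{h}{(x\otimes y)k},
\]
so $(y \otimes x)^T = x \otimes y$. The identity matrix is symmetric, and the scalar factors $(1-\inner{x}{y})$ and $(1-\inner{x}{y})^{-2}$ are unchanged when $x$ and $y$ are swapped. Taking transposes termwise therefore gives
\[
    J_{x,y}^T ~ = ~ (1-\inner{y}{x})^{-2}\bigl((1-\inner{y}{x})I + x\otimes y\bigr).
\]
This is exactly the formula for $J_{y,x}$, obtained by interchanging the roles of $x$ and $y$ in the definition of $J_{x,y}$.

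Part~(ii) follows at once from part~(i), since $\det(A^T) = \det(A)$. As a sanity check, I would also compute the determinant explicitly. Write $s := 1-\inner{x}{y} > 0$. The matrix determinant lemma gives
\[
    \det(sI + y\otimes x) ~ = ~ s^{d-1}(s + \inner{x}{y}) ~ = ~ s^{d-1}.
\]
Hence $\det(J_{x,y}) = s^{-2d} \cdot s^{d-1} = s^{-(d+1)}$, which is visibly symmetric in $x$ and $y$. This explicit value should also be useful later for the volume-density computations.

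The only place to take care is the convention $(y\otimes x)(h) = \inner{x}{h}\,y$, so that its transpose really is $x \otimes y$ and not the same operator again. Beyond that there is no real obstacle.
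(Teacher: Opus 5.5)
Your proposal is correct and matches the paper's proof: both take transposes termwise in the explicit operator formula, using $(y\otimes x)^T = x\otimes y$ and the symmetry of the scalar factors in $x$ and $y$, and then deduce (ii) from $\det(L^T)=\det(L)$. Your explicit value $\det(J_{x,y}) = (1-\inner{x}{y})^{-(d+1)}$, obtained from the matrix determinant lemma, is also correct, though the paper does not need it.
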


\begin{proof}
Using the operator formula above and the identity $(y \otimes x)^T = x \otimes y$, we obtain
\[
    J_{x,y}^T
        ~ = ~ (1-\inner{x}{y})^{-2} \bigl( (1 - \inner{x}{y}) I + x \otimes y \bigr)
        ~ = ~ J_{y,x},
\]
which proves~(i). Part~(ii) follows by taking determinants and using
$\det(L^T) = \det(L)$.
\end{proof}

\bigskip

We are now ready to prove the duality lemma.

\funkHTDuality*

\begin{proof}
To prove part~(i), fix $x \in G$. Since $x \in \interior(K)$, we have $1 - \inner{x}{y} > 0$ for all $y \in \polar{K}$. Thus $\polar{K} \subset \Dom(P_x)$, and since $\polar{K}$ is compact, $P_x$ is a $C^1$ diffeomorphism from an open neighborhood of $\polar{K}$ onto its image. By Lemma~\ref{lem:trans-polar}(ii),
\[
    \polar{(K-x)} ~ = ~ P_x(\polar{K}).
\]
Therefore, the change-of-variables formula yields
\[
    \lambda_d \bigl( \polar{(K-x)} \bigr)
        ~ = ~ \int_{\polar{K}} |\det(J_{x,y})| \, d\lambda_d(y).
\]
By the definition of the Funk Holmes--Thompson volume,
\[
    \volFunk[K](G)
        ~ = ~ \frac{1}{\omega_d} \int_G \left( \int_{\polar{K}} |\det(J_{x,y})| \, d\lambda_d(y) \right) d\lambda_d(x).
\]

Since $G\subset\interior(K)$, one has $\polar{K} \subset \interior(\polar{G})$. Applying the same argument with $G$ and $K$ interchanged, we obtain
\[
    \volFunk[\polar{G}](\polar{K})
        ~ = ~ \frac{1}{\omega_d} \int_{\polar{K}} \left( \int_G |\det(J_{y,x})| \, d\lambda_d(x) \right) d\lambda_d(y).
\]
By Lemma~\ref{lem:jacobian}(ii), $|\det(J_{y,x})| = |\det(J_{x,y})|$. Since the integrand is nonnegative, Tonelli's theorem applies~\cite[Theorem~8.8(a)]{Rud87}, and the two expressions coincide.

To prove part~(ii), fix a smooth point $x \in \bd G$. Since $x \in \interior(K)$, the map $P_x$ is a $C^1$ diffeomorphism from an open neighborhood of $\polar{K}$ onto its image. In particular, for every $y \in \bd \polar{K}$, we have $DP_x(y)=J_{x,y}$. At each smooth point $y \in \bd \polar{K}$, applying Lemma~\ref{lem:volume-area-transform}(ii) with $L = DP_x(y) = J_{x,y}$, we obtain
\[
    \norm[P_x(\polar{K})](P_x(y)) \, d\lambda_{d-1}(P_x(y))
        ~ = ~ \cofactor(J_{x,y}) \bigl( \norm[\polar{K}](y) \, d\lambda_{d-1}(y) \bigr).
\]

By Lemma~\ref{lem:trans-polar}(ii), $P_x(\polar{K}) = \polar{(K - x)}$. Hence, by Cauchy's projection formula applied to the convex body $P_x(\polar{K})$ in the direction $\norm[G](x)$, the $\lambda_{d-1}$-measure of the orthogonal projection of $P_x(\polar{K})$ onto the tangent hyperplane to $\bd G$ at $x$ is
\[
    \frac{1}{2} \int_{\bd\polar{K}} \left| \inner{\cofactor(J_{x,y}) \, \norm[\polar{K}](y)}{\norm[G](x)} \right|
    d\lambda_{d-1}(y).
\]
Using the fact that $\cofactor(J_{x,y}) = \det(J_{x,y})\,J_{x,y}^{-T}$, this becomes
\[
    \frac{1}{2} \int_{\bd\polar{K}} |\det(J_{x,y})| \cdot \bigl| \inner{J_{x,y}^{-T}\,\norm[\polar{K}](y)}{\, \norm[G](x)} \bigr| \, d\lambda_{d-1}(y).
\]

Substituting this expression into the definition of the Holmes--Thompson area and integrating over $x\in\bd G$, we find that $\areaFunk[K](G)$ is equal to
\begin{equation} \label{eq:areaFunk1}
    \areaFunk[K](G) ~ = ~
    \frac{1}{2\, \omega_{d-1}} \int_{\bd G} \left( \int_{\bd\polar{K}} \!\! |\det(J_{x,y})| \cdot         \bigl| \inner{J_{x,y}^{-T} \norm[\polar{K}](y)}{\, \norm[G](x)} \bigr| \, d\lambda_{d-1}(y) \right)    d\lambda_{d-1}(x).
\end{equation}
Similarly, interchanging the roles of the bodies shows that
$\areaFunk[\polar{G}](\polar{K})$ is equal to
\begin{equation} \label{eq:areaFunk2}
    \areaFunk[\polar{G}](\polar{K}) ~ = ~
    \frac{1}{2\, \omega_{d-1}} \int_{\bd\polar{K}} \!\!\left( \int_{\bd G} |\det(J_{y,x})| \cdot         \bigl| \inner{J_{y,x}^{-T}\norm[G](x)}{\norm[\polar{K}](y)} \bigr| \,d\lambda_{d-1}(x) \right)     d\lambda_{d-1}(y).
\end{equation}
By Lemma~\ref{lem:jacobian}, we have $J_{y,x} = J_{x,y}^T$ and $|\det(J_{y,x})| = |\det(J_{x,y})|$. Moreover,
\begin{align*}
    \inner{J_{y,x}^{-T}\norm[G](x)}{\, \norm[\polar{K}](y)}
    & ~ = ~ \inner{(J_{x,y}^T)^{-T}\norm[G](x)}{\, \norm[\polar{K}](y)} \\
    & ~ = ~ \inner{J_{x,y}^{-1}\norm[G](x)}{\, \norm[\polar{K}](y)} \\
    & ~ = ~ \inner{\norm[G](x)}{\, J_{x,y}^{-T} \norm[\polar{K}](y)}.
\end{align*}
Thus the integrands in~\eqref{eq:areaFunk1} and~\eqref{eq:areaFunk2} are equal. Since they are nonnegative, Tonelli's theorem applies, and the two double integrals are equal. This proves the claim.
\end{proof}

\section{Proof of the Hilbert Boundary-Transfer Property} \label{app:hilb-exp-surf-i}

In this section, we present a proof of Lemma~\ref{lem:hilb-exp-bdry}, which states that for every point on the boundary of a convex body $G$ in the Hilbert geometry, there is a point on the boundary of its $\alpha$-expanded body that is at distance $\alpha$.

\lemHilbExpBdry*

Throughout this section, $G$, $G_+$, $K$, and $\alpha$ are as given in the lemma's statement. We will use $\dist(\cdot,\cdot)$ as a shorthand for the Hilbert distance with respect to $K$. We begin with the following inequality, due to Busemann (see \cite[Eq.~(18.7)]{Bus55}).

\begin{lemma}[Busemann's pencil inequality] \label{lem:busemann-18-7}
Given a convex body $K$ and any points $p,q \in \interior(K)$, let $a$ and $b$ denote the points where the line through $p$ and $q$ meets $\bd K$. Let $h_a$ and $h_b$ denote any supporting hyperplanes of $K$ at $a$ and $b$, respectively, and let $\mathcal{P}$ be the pencil of hyperplanes passing through $h_a \cap h_b$ in projective space (see Figure~\ref{f:comp-chord}(a)). If $h_p, h_q \in \mathcal{P}$ are the hyperplanes with $p \in h_p$ and $q \in h_q$, then for any $u \in h_p \cap \interior(K)$ and any $v \in h_q \cap \interior(K)$, $\dist(p,q) \leq \dist(u,v)$.
\end{lemma}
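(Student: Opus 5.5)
The plan is to use the projective invariance of the Hilbert metric to reduce to the case where the pencil $\mathcal{P}$ consists of parallel hyperplanes. After that reduction the inequality becomes a one-dimensional comparison of cross ratios along chords.

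\textbf{Normalization.} Since $K$ is convex and $h_a,h_b$ support $K$, the body $K$ lies in the closed ``wedge'' $W := h_a^- \cap h_b^-$, the intersection of the two supporting halfspaces. Consider the $(d-2)$-flat $h_a\cap h_b$, taken in projective space, so that if $h_a\parallel h_b$ it lies at infinity and the pencil is already parallel. I would pick a hyperplane $h_\infty\in\mathcal{P}$ lying outside $W$, which is possible because the pencil sweeps through the complementary wedge. Then I apply an invertible projective transformation $\Phi$ sending $h_\infty$ to the hyperplane at infinity. This map is admissible for $K$ because $h_\infty\cap K=\emptyset$. It preserves Hilbert distance, and it maps the pencil to a family of parallel hyperplanes. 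After a further affine change, $h_a=\{x_1=0\}$, $h_b=\{x_1=1\}$, $K\subseteq\{0\le x_1\le 1\}$, and every member of $\mathcal{P}$ has the form $\{x_1=\text{const}\}$. Write $s:=p_1=u_1$ and $t:=q_1=v_1$. Without loss of generality $0<s<t<1$, using the symmetry of $\dist$ and noting that the case $s=t$ forces $p=q$.

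\textbf{Computing both sides.} The chord of $K$ through $p,q$ has endpoints $a,b$ on $\{x_1=0\}$ and $\{x_1=1\}$, respectively. Since cross ratios on a line can be computed from any affine coordinate, here $x_1$,
\[
\dist(p,q)=\tfrac12\ln\frac{t\,(1-s)}{s\,(1-t)}.
\]
Now let the line through $u,v$ meet $\bd K$ at $a',b'$, with $a'$ beyond $u$ and $b'$ beyond $v$. The line is not parallel to the slab, since $u_1\neq v_1$. Because $K$ lies in the slab, $a'_1\in[0,s)$ and $b'_1\in(t,1]$. Hence
\[
\dist(u,v)=\tfrac12\ln\frac{(t-a'_1)(b'_1-s)}{(s-a'_1)(b'_1-t)}.
\]
The function $\alpha\mapsto (t-\alpha)/(s-\alpha)$ is increasing on $[0,s)$, and $\beta\mapsto(\beta-s)/(\beta-t)$ is decreasing on $(t,1]$. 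Therefore this quantity is minimized at $a'_1=0$, $b'_1=1$, where it equals $\dist(p,q)$. This gives $\dist(p,q)\le\dist(u,v)$. Equivalently, in projective language, the cross ratio of four hyperplanes of a pencil is read off on any transversal line, and enlarging the chord only decreases the Hilbert distance.

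\textbf{Main obstacle.} The main step needing care is the normalization: choosing $h_\infty\in\mathcal{P}$ disjoint from $K$, and checking that $\Phi$ really is admissible. Admissibility is what keeps the ordering $a',u,v,b'$ (and $a,p,q,b$) intact rather than wrapping through infinity. I would handle this by noting that $\interior(K)\subseteq\interior(W)$, which is one of the two open ``angles'' cut out by $h_a\cup h_b$. Any pencil hyperplane passing through the opposite angle then avoids $K$. The parallel case $h_a\parallel h_b$ needs no transformation at all.
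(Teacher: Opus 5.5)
The paper gives no proof of this lemma; it quotes it from Busemann~\cite[Eq.~(18.7)]{Bus55}. Your proposal supplies a self-contained argument, and its core is correct. Once $\mathcal{P}$ is parallel with $h_a=\{x_1=0\}$ and $h_b=\{x_1=1\}$, both distances are the cross ratios you write down. The monotonicity of $(t-\alpha)/(s-\alpha)$ on $[0,s)$ and of $(\beta-s)/(\beta-t)$ on $(t,1]$ then gives $\dist(p,q)\le\dist(u,v)$.

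\textbf{The gap is in the step you flag as the main obstacle.} It is not true in general that some member of $\mathcal{P}$ misses $K$. Every member contains the axis $h_a\cap h_b$, and the axis can meet $K$.
\begin{itemize}
\item \emph{Counterexample.} Take $K=[0,1]^2$, $a=(\tfrac12,0)$, $b=(0,\tfrac12)$, $h_a=\{y=0\}$ and $h_b=\{x=0\}$. Every line of $\mathcal{P}$ passes through the vertex $O\in K$. So any projective map making $\mathcal{P}$ parallel sends $O$ to infinity. It is not admissible for $K$, and $\Phi(K)$ is not a body in a slab.
\item \emph{What your choice does give.} Write $h_a=\{\ell_a=0\}$ and $h_b=\{\ell_b=0\}$ with affine $\ell_a,\ell_b\le 0$ on $K$. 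Your $h_\infty$ has the form $\{\lambda\ell_a+\mu\ell_b=0\}$ with $\lambda,\mu>0$. Hence $h_\infty\cap K\subseteq h_a\cap h_b$.
\end{itemize}

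This weaker fact suffices once you check that neither closed chord meets the axis.
\begin{itemize}
\item \emph{The chord $[a,b]$.} If $a\in h_a\cap h_b$, then $a,b\in h_b$, so $[a,b]\subseteq h_b$. This contradicts $p\in\interior(K)$. The same argument applies to $b$.
\item \emph{The line $uv$.} Suppose it met the axis at a point $w$. Then $w\neq u$, because the axis lies in $h_a$, which misses $\interior(K)$. Since $w,u\in h_p$, the whole line lies in $h_p$. Hence $v\in h_p\cap h_q=h_a\cap h_b$, using $h_p\neq h_q$ (which holds since $p\neq q$). This contradicts $v\in\interior(K)$.
\end{itemize}
Consequently $\Phi$ is finite and order-preserving on $[a,b]$ and $[a',b']$, and it preserves the two cross ratios. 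It also maps $a',b'\in W\setminus(h_a\cap h_b)$ into the closed slab. This is all your computation uses.

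Alternatively, you can avoid $\Phi$ and read both cross ratios directly off the four pencil hyperplanes $h_a,h_p,h_q,h_b$. With this repair your proof is complete.
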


\begin{figure}[htbp]
  \centerline{\includegraphics[scale=0.40,page=1]{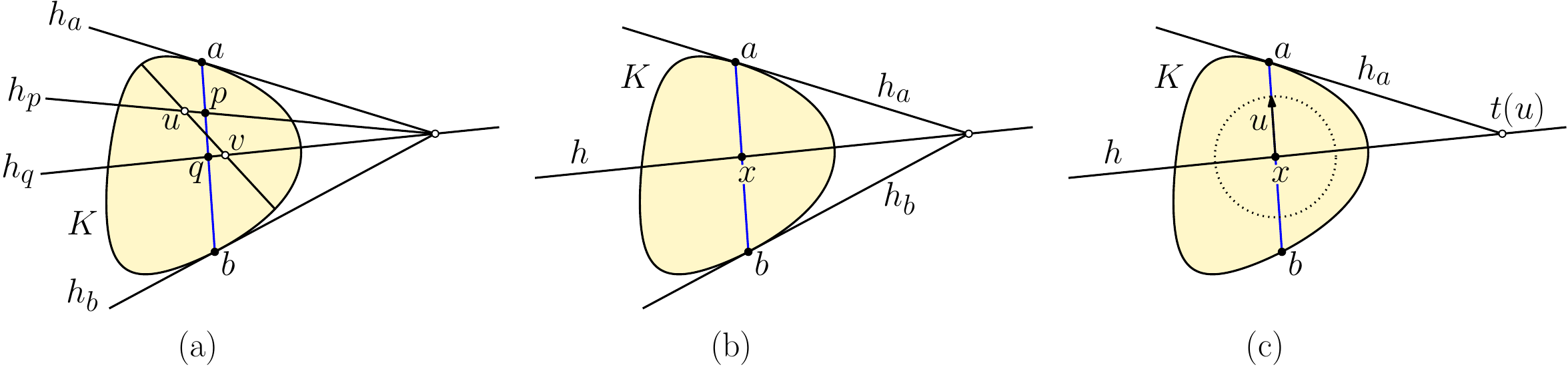}}
  \caption{(a) Busemann's pencil inequality, (b) complementary chords, and (c) Lemma~\ref{lem:comp-chord}.}
  \label{f:comp-chord}
\end{figure}

It will be useful to introduce an adaptation of the Euclidean notion of perpendicularity to Hilbert geometry. Fix $x \in \interior(K)$,  and let $h$ be any hyperplane through $x$. A chord $\overline{a b}$ of $K$ passing through $x$ is said to be \emph{complementary} to $h$ if there exist supporting hyperplanes $h_a$ and $h_b$ at $a$ and $b$, respectively, such that $h_a \cap h_b \subset h$ (see Figure~\ref{f:comp-chord}(b)). (Note that $h_a \cap h_b$ is understood to be a $(d-2)$-dimensional flat in projective space, and it may lie at infinity.) Observe that complementarity is invariant under projective transformations.

\begin{lemma}[Complementary chord existence] \label{lem:comp-chord}
Given a convex body $K$ in $\RE^d$, any point $x \in \interior(K)$, and any hyperplane $h$ passing through $x$, there exists a chord through $x$ that is complementary to $h$.
\end{lemma}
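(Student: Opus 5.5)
We need a chord $\overline{ab}$ through $x$ with supporting hyperplanes $h_a, h_b$ at $a, b$ such that $h_a \cap h_b \subset h$, where $h_a \cap h_b$ is a projective $(d-2)$-flat. I would argue by a continuity (degree/intermediate value) argument over chords through $x$. First I reduce by projective invariance: complementarity is projectively invariant, so I may apply a projective map sending $h$ to the hyperplane at infinity. This is admissible, since $h$ meets $\interior(K)$ and therefore cannot be sent to infinity directly. A cleaner normalization is to keep $h$ affine and place $x$ at the origin. Working in the $2$-dimensional picture is not enough in general, so I will work directly in $\RE^d$.

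\textbf{Step 1 (a convex-variational candidate).} Write $h = \{y \ST \inner{w}{y} = \inner{w}{x}\}$ and let $h^{\pm}$ denote its two sides. A natural choice is to take $a$ maximizing and $b$ minimizing a suitable function. Instead I use the projective-polar picture. Consider the Funk-type functional
\[
    f(u) ~ := ~ \distFunk[K](x,x+u) + \distFunk[K](x,x-u)
\]
for $u$ ranging over a small sphere, or more directly the ratio structure. The cleanest route, I expect, is via polarity. After translating $x$ to $O$, a chord $\overline{ab}$ through $O$ has $b = -s a$ with $s > 0$. Supporting hyperplanes at $a$ and $b$ correspond to points $\xi \in \bd\polar{K}$ with $\inner{\xi}{a} = 1$, and $\eta \in \bd\polar{K}$ with $\inner{\eta}{b} = 1$.

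\textbf{Step 2 (translate complementarity).} The flat $h_a \cap h_b$ lies in $h = \{\inner{w}{y} = 0\}$ exactly when $w$ lies in the span of $\xi$ and $\eta$ as linear functionals. In projective terms, the line through the dual points $\xi, \eta$ passes through the point $w$ at infinity. Thus $\eta - \xi$ must be parallel to $w$, up to the degenerate case in which they are parallel in the projective sense. Since $a$ and $b$ are collinear with $O$, I additionally need $\xi$ and $\eta$ to have normals at $\polar{K}$ pointing along a common line through $O$, namely $a$ and $-b$.

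\textbf{Step 3 (existence via projection).} I project $\polar{K}$ along direction $w$ onto $w^{\perp}$, obtaining a $(d-1)$-dimensional convex body $P$ that contains $O$ in its interior. Take any boundary point $p \in \bd P$ with outer normal $n \in w^{\perp}$. Its fiber line $p + \RE w$ meets $\polar{K}$ in a segment $[\xi,\eta]$, where $\xi$ and $\eta$ lie on the shadow boundary, and $\eta - \xi \parallel w$. Each of $\xi$ and $\eta$ admits the supporting hyperplane $\{\inner{\cdot}{n} = h_P(n)\}$ of $\polar{K}$ (since that hyperplane contains the fiber). So we can choose the normal vector $a \propto n$ at both $\xi$ and $\eta$. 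Polarity then gives points $a = n / h_P(n)$ on $\bd K$ with supporting hyperplanes $\xi$ and $\eta$. This produces two supporting hyperplanes at the same point, which is not what we want: we need $a$ and $b$ on opposite sides. So instead I take $n$ and $-n$. Points $\xi$ on the fiber over the face of $P$ with normal $n$ and $\eta$ on the fiber over the face with normal $-n$ give $a \propto n$ and $b \propto -n$, collinear with $O$. The remaining requirement is $\eta - \xi \parallel w$, which forces the two fibers to coincide. That is impossible unless $d$ is small, so this choice of $\xi$ and $\eta$ fails as stated.

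\textbf{Step 4 (fix via intermediate value over directions).} Hence the real argument is topological. For each direction $u \in S^{d-1} \cap w^{\perp}$, or more generally each chord direction, choose $a(u)$ and $b(u)$ together with supporting functionals $\xi(u)$ and $\eta(u)$. Then define the map $u \mapsto$ the component of $\xi(u)$ and $\eta(u)$ modulo the span of $w$. I would seek a zero of the map $F(u) = $ projection of $(\eta(u) - \lambda\, \xi(u))$ onto $w^{\perp}$, for suitable $\lambda$, using a Borsuk--Ulam / odd-map degree argument, since $F(-u)$ relates to $F(u)$ by swapping $a$ and $b$. Multivaluedness of the normals is handled by smoothing $K$ and passing to a limit, using compactness of the sets of chords and supporting hyperplanes.

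I expect this degree argument to be the main obstacle. As a first attempt I would try the planar case: choose the plane spanned by a chord and $w$'s normal, and use Busemann's classical construction there. Busemann's Eq.~(18.x) proof of perpendicular existence, via the minimal-Hilbert-distance foot point, is the fallback: take $a$ so that the projective point $x$ minimizes the distance from a far point to $h$, and read off complementarity from first-order optimality.
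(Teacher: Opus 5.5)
The gap is that the proposal never proves existence. You flag the degree argument yourself as ``the main obstacle,'' and Steps~3 and~4 do not supply it.

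Your dual reformulation is the right starting point once it is cleaned up. Put $x=O$ and $h=w^{\perp}$, and write $h_a=\{y \ST \inner{\xi}{y}=1\}$ and $h_b=\{y \ST \inner{\eta}{y}=1\}$ with $\xi,\eta\in\bd\polar{K}$. Then $h_a\cap h_b\subset h$ (projectively) if and only if $\eta-\xi\parallel w$. The outer normals of $\polar{K}$ at $\xi$ and $\eta$ are $a$ and $b$, which point in opposite directions. The condition ``$w$ lies in the span of $\xi$ and $\eta$'' is not equivalent, because it drops the affine constant.

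Step~3 fails, as you observe. Step~4 is only a sketch, and its map is mis-specified:
\begin{itemize}
\item If $u$ ranges over $S^{d-1}\cap w^{\perp}\cong S^{d-2}$, Borsuk--Ulam says nothing about maps into $w^{\perp}\cong\RE^{d-1}$.
\item In fact no complementary chord lies in $h$ at all. From $b=-sa$ you get $\inner{\eta-\xi}{a}=-1/s-1\neq 0$, whereas $a\in w^{\perp}$ together with $\eta-\xi\parallel w$ would force this to vanish.
\item The ``suitable $\lambda$'' must be $\lambda=1$. That is the condition from Step~2, and only then is $F$ odd.
\end{itemize}
The Busemann foot-point fallback is also undeveloped. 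It would need a point $p$ whose foot on $h$ is the prescribed $x$, which is itself a topological existence claim.

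Either of two short arguments closes the gap.

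(1) Run Step~4 with $\lambda=1$ over the whole sphere. For smooth $K$, let $\xi(u)$ be the polar point of the unique supporting hyperplane at the boundary point in direction $u\in S^{d-1}$, so that $\eta(u)=\xi(-u)$. The map sending $u$ to the orthogonal projection of $\xi(u)$ onto $w^{\perp}$ is continuous from $S^{d-1}$ to $\RE^{d-1}$. Borsuk--Ulam then gives $u$ with $\xi(-u)-\xi(u)\parallel w$, and the nonsmooth case follows by approximation and compactness. This is exactly the paper's proof: its vector $t(u)^*$ is this projection, since $h\cap h_a=\{z\in h \ST \inner{\xi(u)}{z}=1\}$.

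(2) Alternatively, repair Step~3, whose flaw was choosing the normal inside $w^{\perp}$. Take a longest chord $[\xi,\eta]$ of $\polar{K}$ parallel to $w$, with $\eta-\xi=\rho w$ and $\rho>0$. Then $\rho w\in\bd(\polar{K}-\polar{K})$. An outer normal $n$ of the difference body at $\rho w$ satisfies $\inner{n}{\eta}=h_{\polar{K}}(n)$ and $\inner{-n}{\xi}=h_{\polar{K}}(-n)$, and both values are positive. Set $b:=n/h_{\polar{K}}(n)$ and $a:=-n/h_{\polar{K}}(-n)$. These lie on $\bd K$ on opposite rays from $O$. The hyperplanes $\{\inner{\xi}{\cdot}=1\}$ and $\{\inner{\eta}{\cdot}=1\}$ support $K$ at $a$ and $b$, and their intersection lies in $h$ because $\eta-\xi\parallel w$. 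This route needs neither Borsuk--Ulam nor the smoothing limit.
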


\begin{proof}
We first treat the case where $K$ is smooth. We will employ the Borsuk--Ulam theorem~\cite{Bor33}, which states that if $f: S^n \to \RE^n$ is continuous, then there exists $u \in S^n$ such that $f(-u) = f(u)$. 

Here $n = d-1$. For $u \in S^{d-1}$, let $a = a(u)$ be the point where the ray from $x$ in direction $u$ meets $\bd K$ (see Figure~\ref{f:comp-chord}(c)). Let $h_a$ be the unique supporting hyperplane of $K$ at $a$, and let $t(u)$ denote the $(d-2)$-flat $h\cap h_a$ in projective space, which may lie at infinity.

Identify $h$ with $\RE^{d-1}$ with the origin at $x$. If $t(u)$ lies in the affine chart, then it is an affine
hyperplane in $\RE^{d-1}$ not passing through the origin. In this case there exists a unique vector $t(u)^* \in \RE^{d-1} \setminus \{O\}$ such that
\[
    t(u)
        ~ = ~ \{z \in \RE^{d-1} \ST \inner{t(u)^*}{z} = 1\}. 
\]
If $t(u)$ lies at infinity (that is, if $h_a$ and $h$ are parallel), set $t(u)^* := 0$. Define $f: S^{d-1} \to \RE^{d-1}$ by $f(u) := t(u)^*$. In the smooth case, the map $u \mapsto h_a$ varies continuously, and hence so does $f$. 

By Borsuk--Ulam there exists $u \in S^{d-1}$ such that $f(-u) = f(u)$. Let $b := a(-u)$, and let $h_b$ be the unique
supporting hyperplane of $K$ at $b$. The equality $f(-u) = f(u)$ implies that $t(-u) = t(u)$, that is,
\[
    h \cap h_a ~ = ~ h \cap h_b 
\]
as $(d-2)$-flats in projective space. Equivalently, the hyperplanes $h$, $h_a$, and $h_b$ are concurrent in projective space, and hence the chord $\overline{a b}$ through $x$ is complementary to $h$, as desired.

The general (non-smooth) case is handled by a limit argument. Consider smooth convex bodies $K_{\eps} \supseteq K$ with $\lim_{\eps \to 0} K_{\eps} = K$. Applying the smooth case to each $K_{\eps}$ yields a chord $\overline{a_{\eps} b_{\eps}}$ through $x$ and supporting hyperplanes $h_{a_{\eps}}, h_{b_{\eps}}$ of $K_{\eps}$ at $a_{\eps},b_{\eps}$, respectively, such that $(h_{a_{\eps}} \cap h_{b_{\eps}}) \subset h$.

By compactness, after passing to a sequence $\eps_n \to 0$, we may assume that 
\[
    a_{\eps_n} \to a, \qquad 
    b_{\eps_n} \to b, \qquad 
    h_{a_{\eps_n}} \to h_a, \qquad 
    h_{b_{\eps_n}} \to h_b, 
\]
where $a, b \in \bd K$ and $h_a, h_b$ are supporting hyperplanes of $K$ at $a$ and $b$, respectively. Passing to the limit yields $(h_a \cap h_b) \subseteq h$, and therefore $\overline{a b}$ is complementary to $h$, as desired.
\end{proof}

\bigskip

We are now in a position to present the proof.

\begin{proof} (of Lemma~\ref{lem:hilb-exp-bdry})
Fix $x \in \bd G$, and let $h_x$ be any supporting hyperplane of $G$ at $x$. Apply Lemma~\ref{lem:comp-chord} with $h:=h_x$ to obtain a chord $\overline{ab}$ of $K$ through $x$ and supporting hyperplanes $h_a,h_b$ of $K$ at $a,b$ such that
\[
    t ~ := ~ (h_a \cap h_b )\subseteq h_x 
\]
in projective space (see Figure~\ref{f:hilb-exp-bdry}(a)).

\begin{figure}[htbp]
  \centerline{\includegraphics[scale=0.40,page=2]{Figs/comp-chord}}
  \caption{Proof of Lemma~\ref{lem:hilb-exp-bdry}.}
  \label{f:hilb-exp-bdry}
\end{figure}

Label $a, b$ so that $b$ lies on the side of $h_x$ disjoint from $\interior(G)$. Choose $p\in (x,b)$ such that $\dist(x,p) = \alpha$. Such a point exists by continuity, since $\dist(x,q) \to +\infty$ as $q \to b$. Because $x\in G$, we have $\dist(p,G) \leq \dist(p,x) = \alpha$, and hence $p \in G_+$.

Now, take any $x' \in G$ and set $y := \overline{p x'} \cap h_x$. Then $p$, $y$, and $x'$ are collinear with $y$ between $p$ and $x'$, so $\dist(p, x') \geq \dist(p, y)$ (see Figure~\ref{f:hilb-exp-bdry}(b)). Let $h_p$ be the hyperplane through $p$ and $t$. Applying Lemma~\ref{lem:busemann-18-7} to the pair $(p, x)$, with the pencil determined by $h_a \cap h_b$, for $u := p \in h_p$ and $v := y\in h_x$, we have 
\[
    \dist(p, y)
        ~ \geq ~ \dist(p, x)
        ~ =    ~ \alpha,
\]
and therefore $\dist(p, x') \geq \alpha$, for all $x' \in G$. Taking the infimum over $x' \in G$ yields $\dist(p, G) \geq \alpha$. Therefore $\dist(p, G) = \alpha$, so $p \in \bd G_+$. Setting $y := p$, we obtain $\dist(x, y) = \alpha$. The final inclusion follows immediately, since for every $x \in \bd G$, $\dist(x, \bd G_+) \leq \alpha$.
\end{proof}

\section{Auxiliary Proofs for Boundary Duality by Translates} \label{app:asym-boundary-mink}

In this appendix, we prove the two auxiliary lemmas from Section~\ref{s:boundary-cover}. We also recall a Cauchy formula for Holmes--Thompson surface area in a Minkowski space, due to Holmes and Thompson~\cite{HoT79}, which will be used in the proof of Lemma~\ref{lem:sym-union-HT}.

\begin{lemma} \label{lem:mink-cauchy-HT}
Given convex bodies $C$ and $D$ in $\RE^d$ where $D$ is centrally symmetric,
\[
    \areaMink[D](C)
        ~ = ~ \frac{1}{\omega_{d-1}} \int_{\bd \polar{D}} \lambda_{d-1} \bigl( \orthproj{C}{\norm[D](x)^\perp} \bigr) \, d\lambda_{d-1}(x).
\]
\end{lemma}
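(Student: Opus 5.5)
The plan is to write both sides as the same double integral over $\bd C \times \bd \polar{D}$, using Cauchy's projection formula twice, once for each body, and to exchange the order of integration in between. This mirrors the Funk area argument of Lemma~\ref{lem:funk-HT-polarity}(ii), but with a constant Finsler ball. Throughout, I read $\norm[D](x)$ for $x \in \bd \polar{D}$ as the outer unit normal of $\polar{D}$ at $x$. This is the only reading under which the formula type-checks, and it is defined for $\lambda_{d-1}$-a.e.\ $x \in \bd\polar{D}$.

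First I expand the left-hand side from the definition. For a smooth point $p \in \bd C$, the tangent space is $T_p = \norm[C](p)^\perp$. By the projection form of the Minkowski Holmes--Thompson area element, obtained from Lemma~\ref{lem:slice},
\[
    \areaMink[D](C) ~ = ~ \frac{1}{\omega_{d-1}} \int_{\bd C} \lambda_{d-1}\bigl( \orthproj{\polar{D}}{\norm[C](p)^\perp} \bigr)\, d\lambda_{d-1}(p).
\]
Next I apply Cauchy's projection formula to the convex body $\polar{D}$ in direction $u = \norm[C](p)$:
\[
    \lambda_{d-1}\bigl( \orthproj{\polar{D}}{u^\perp} \bigr) ~ = ~ \frac12 \int_{\bd\polar{D}} \bigl|\inner{\norm[\polar{D}](x)}{u}\bigr| \, d\lambda_{d-1}(x).
\]
Substituting gives
\[
    \areaMink[D](C) ~ = ~ \frac{1}{2\omega_{d-1}} \int_{\bd C}\int_{\bd\polar{D}} \bigl|\inner{\norm[\polar{D}](x)}{\norm[C](p)}\bigr| \, d\lambda_{d-1}(x)\, d\lambda_{d-1}(p).
\]

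The integrand is nonnegative and measurable. It is defined off a product of null sets, since nonsmooth points have $\lambda_{d-1}$-measure zero (Section~\ref{s:measures}). Tonelli's theorem therefore lets me swap the order of integration. For fixed $x$, the inner integral over $\bd C$ is again a Cauchy projection integral, now for the body $C$ in direction $\norm[\polar{D}](x)$:
\[
    \frac12 \int_{\bd C} \bigl|\inner{\norm[C](p)}{\norm[\polar{D}](x)}\bigr|\,d\lambda_{d-1}(p) ~ = ~ \lambda_{d-1}\bigl( \orthproj{C}{\norm[\polar{D}](x)^\perp}\bigr).
\]
Integrating this over $x \in \bd\polar{D}$ and keeping the factor $1/\omega_{d-1}$ yields the claimed identity.

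No step is a genuine obstacle. The main points needing care are bookkeeping: using the orthogonal-projection form of the area density (Lemma~\ref{lem:slice}), interpreting the normal correctly, and justifying the Fubini--Tonelli exchange. Central symmetry of $D$ is not actually needed in the computation. It only ensures that $\polar{D}$ is a convex body containing $O$ in its interior, so that its boundary normals are defined almost everywhere.
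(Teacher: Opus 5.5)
Your proof is correct. The paper does not prove this lemma: it only recalls it from Holmes and Thompson~\cite{HoT79}, so your argument supplies a self-contained derivation where the paper relies on a citation.

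Your route has four steps:
\begin{enumerate}
\item write the area density in projection form via Lemma~\ref{lem:slice};
\item apply Cauchy's projection formula to $\polar{D}$;
\item exchange the integrals by Tonelli;
\item apply Cauchy's formula again, this time to $C$.
\end{enumerate}
This is exactly the constant-norm special case of the double-integral argument the paper uses for Funk area duality in Lemma~\ref{lem:funk-HT-polarity}(ii), with the Jacobian $J_{x,y}$ replaced by the identity. A quick check: for $D = B_2^d$ your formula reduces to the classical Cauchy surface-area formula.

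Beyond being self-contained, your argument clarifies two things. First, the computation uses only $O \in \interior(D)$, not central symmetry. Second, it fixes that $\norm[D](x)$ must mean the outer unit normal $\norm[\polar{D}](x)$ of $\polar{D}$. This is also how the paper uses the formula in the proof of Lemma~\ref{lem:sym-union-HT}, where $x$ ranges over smooth points of $\bd\polar{D}$.

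One small overstatement: your reading is not literally the only one that type-checks. One could also take the normal of $D$ at the boundary point dual to $x$, namely $x/\|x\|$. That reading makes the identity false: for $C = D = [-1,1]^2$ the left side is $8$, while the right side becomes $8\ln(1+\sqrt{2})$. So your choice is the correct one.
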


\lemSymUnionHT*

\begin{proof}
Given a convex body $C$, its \emph{difference body}, denoted $\Delta(C)$, is the centrally symmetric set $C + (-C)$. Because $O \in C$,
\[
    C_{\cup}
        ~ = ~ \conv(C \cup -C)
        ~ \subseteq ~ \Delta(C).
\]
Hence, Rogers--Shephard~\cite{RoS57} gives 
\[
    \volMink[D](C_{\cup})
        ~ =    ~ \frac{\lambda_d(\polar D)}{\omega_d}\,\lambda_d(C_{\cup})
        ~ \leq ~ \frac{\lambda_d(\polar D)}{\omega_d}\,\lambda_d(\Delta(C))
        ~ \leq ~ \binom{2 d}{d} \volMink[D](C),
\]
proving~(i).

For part~(ii), let $x \in \bd \polar D$ be a smooth point, and set
\[
    P_x
        ~ :=      ~ \orthproj{C}{\norm[D](x)^\perp}
        ~ \subset ~ \norm[D](x)^\perp.
\]
Then
\[
    \orthproj{C_{\cup}}{\norm[D](x)^\perp}
        ~ =         ~ \conv(P_x \cup -P_x)
        ~ \subseteq ~ \Delta(P_x).
\]
Applying Rogers--Shephard in the $(d-1)$-dimensional space $\norm[D](x)^\perp$, we obtain
\[
    \lambda_{d-1}\bigl(\orthproj{C_{\cup}}{\norm[D](x)^\perp}\bigr)
        ~ \leq ~ \binom{2d-2}{d-1}\, \lambda_{d-1}(P_x)
        ~ =    ~ \binom{2d-2}{d-1}\, \lambda_{d-1} \bigl( \orthproj{C}{\norm[D](x)^\perp} \bigr).
\]
Integrating over $\bd \polar D$ and using Lemma~\ref{lem:mink-cauchy-HT}, we get 
\[
    \areaMink[D](C_{\cup})
        ~ \leq ~ \binom{2d-2}{d-1}\, \areaMink[D](C).
\]
This proves~(ii).
\end{proof}

\lemCoreCover*

\begin{proof}
It suffices to show that each translate of $D$ can be covered by $\CLEQ 1$ translates of $D_{\cap}$. Let $T \subseteq D$ be maximal such that the translates $\{ t + \frac{1}{2} D_{\cap} \ST t \in T\}$ have pairwise disjoint interiors. Then clearly the translates $\{ t + D_{\cap} \ST t \in T \}$, cover $D$. Moreover, each half-sized translate is contained in 
\[
    D + \tfrac{1}{2} D_{\cap}
        ~ \subseteq ~ \tfrac{3}{2} D.
\]
Therefore,
\[
    |T|\, \lambda_d \left( \tfrac{1}{2} D_{\cap} \right) 
        ~ \leq ~ \lambda_d \left( \tfrac{3}{2} D \right), 
\]
and hence
\[
    |T|
        ~ \leq ~ 3^d\, \frac{\lambda_d(D)}{\lambda_d(D_{\cap})}.
\]

By a result of Milman--Pajor~\cite[Corollary~3 and the subsequent Remark~4]{MiP00}, if either the centroid of $D$ or the Santal\'o point of $D$ coincides with the origin, then $\lambda_d(D_{\cap}) \CGEQ \lambda_d(D)$. Thus, $|T| \CLEQ 1$. Refining each member of an optimal $D$-cover of $U$ by such a $D_{\cap}$-cover completes the proof. 
\end{proof}

\end{document}